\title[Smallest number of generators and the probability of generating an algebra]{On the smallest number of generators and the
probability of generating an algebra}
\author[R.V.~Kravchenko]{Rostyslav V.~Kravchenko}
\author{Marcin Mazur}
\author[B.V.~Petrenko]{Bogdan V.~Petrenko}
\address{Department of Mathematics\\
Texas A\&M University \\ College Station, TX 77843-3368, USA}
\email{rkchenko@gmail.com}
\address{
Department of Mathematics \\
Binghamton University \\
P.O. Box 6000 \\
Binghamton, NY 13892-6000, USA } \email{
mazur@math.binghamton.edu}
\address{
Department of Mathematics \\ SUNY Brockport \\ 350 New Campus
Drive \\ Brockport, NY 14420, USA
 }
\email{ bpetrenk@brockport.edu}
\newtheorem{theorem}{Theorem}[section]
\newtheorem{lemma}[theorem]{Lemma}
\newtheorem{proposition}[theorem]{Proposition}
\newtheorem{corollary}[theorem]{Corollary}
\newtheorem{remark}[theorem]{Remark}
\newtheorem{conjecture}[theorem]{Conjecture}
\newtheorem{definition}[theorem]{Definition}
\newtheorem{example}[theorem]{Example}
\newtheorem{question}[theorem]{Question}
\def\gen{\text{\rm gen}}
\def\tr{\text{\rm tr}}
\def\PGL{\text{\rm PGL}}
\def\End{\text{\rm End}}
\def\Aut{\text{\rm Aut}}
\def\G{\text{\rm G}}
\def\M{\text{\rm M}}
\def\Gen{\text{\rm Gen}}
\def\den{\text{\rm den}}
\def\g{\text{\rm g}}
\def\ng{\text{\rm ng}}
\def\dd{\displaystyle}
\def\N{\text{\rm N}}
\def\d{\text{\rm d}}
\newcommand{\Dg}{\mathcal D}
\newcommand{\Ag}{\mathcal A}
\newcommand{\Fg}{\mathbb{F}}
\newcommand{\mt}[1]{\M_{#1}(\Fg_q)}
\newcommand{\mx}[2]{\M_{#1}(\Fg_{q^{#2}})}
\newcommand{\fg}{\mathcal F}
\def\sl{\text{\rm sl}}
\def\tr{\text{tr}}
\def\conj{\text{conj}}
\newtheorem{nle}[theorem]{Lemma}
\newtheorem{te}[theorem]{Theorem}
\newtheorem{cl}[theorem]{Corollary}
\newtheorem{pro}[theorem]{Proposition}
\newcommand{\mo}[1]{{ \hbox{\rm{ (mod\ $#1$) }}}}
\newcommand{\f}[1]{\frak{#1}}
\newtheorem*{te*}{Theorem}
\newcommand{\spec}[0]{\text{\rm Spec~}}
\newcommand{\jspec}[0]{\text{\rm ~j-Spec~}}
\newcommand{\mspec}[0]{\text{\rm ~m-Spec~}}
\def\dsp{\def\baselinestretch{1.37}\large}
\begin{document}
\maketitle
\dsp

\begin{abstract}
In this paper we study algebraic and asymptotic properties of generating sets of
algebras over orders in number fields. Let $A$ be an associative algebra over an
order $R$ in an algebraic number field. We assume that
$A$ is a free $R$-module of finite rank. We develop a technique to compute the smallest
number of generators of $A$. For example, we prove that the ring $\M_3(\mathbb{Z})^{k}$ admits two generators
if and only if $k\leq 768$. For a given positive integer $m$, we define the density of the set of all ordered
$m$-tuples of elements of $A$ which generate it as an $R$-algebra.
We express this density as a certain infinite product over the maximal ideals of $R$, and we
interpret the resulting formula
probabilistically. For example, we show that the probability that $2$ random $3\times 3$ matrices
generate the ring $\M_3(\mathbb{Z})$
is equal to $\dd \left( \zeta(2)^2 \zeta(3) \right)^{-1}$,  where $\zeta$ is the
Riemann zeta-function.

\vspace{4mm}
\noindent
{\bf Mathematics Subject Classification (2000).} Primary 16S15, 11R45, 11R99, 15A33, 15A36, 11C20, 11C08.
Secondary 16P10, 16H05.

\vspace{3mm}
\noindent
{\bf Keywords:} density, smallest number of generators, probability of generating.
\end{abstract}

\tableofcontents

\section{Introduction}
Let $R$ be a commutative ring with $1$. Recall that a set
$S$ generates an associative unital $R$-algebra $A$ if the set of all
monomials in the elements of $S$ (including the degree zero monomial $1$)
spans $A$ as an $R$-module. This paper
lays foundation for our program to investigate properties of the sets
of generators of $R$-algebras $A$ whose additive group is a
finitely generated $R$-module. A substantial part of our results grew out of the following question:
given a ring $A$ whose additive group is a free abelian group of finite rank and a positive
integer $k$, what is the probability that $k$ random elements of $A$ generate it as a $\mathbb Z$-algebra?
We will show that this question can be stated in a rigorous way and that it has a very interesting answer.
The following formulas, in which $\zeta$ denotes the Riemann zeta-function, are special cases of our results
(see Theorem~\ref{minnumbergen}):

\begin{itemize}
\item The probability that $m$ random $2\times 2$ matrices generate the ring $\M_2(\mathbb{Z})$
is equal to $\displaystyle \frac{1}{\zeta(m-1) \zeta(m)}$.

\item The probability that $2$ random $3\times 3$ matrices generate the ring $\M_3(\mathbb{Z})$
is equal to $\displaystyle \frac{1}{\zeta(2)^2 \zeta(3)}$.

\item The probability that $3$ random $3\times 3$ matrices generate the ring $\M_3(\mathbb{Z})$
is equal to $\displaystyle \frac{1}{\zeta(2) \zeta(3)\zeta(4)}\prod_p\left(1+\frac{1}{p^2}+
\frac{1}{p^3}-\frac{1}{p^5}\right)$, where the product is taken over all prime numbers.

\end{itemize}

Our main results are obtained for algebras $A$ over an order $R$ in some
number field such that $A$ is a free $R$-module of finite rank. It is not hard though to
extend the results to the case when $R$ is an order in a global field of positive
characteristic (we will address this in a follow-up paper).
 Roughly speaking, a choice of an integral
basis of $R$ and of a basis of $A$ over $R$ allows us to introduce integral coordinates on all cartesian
powers $A^k$, $k\in \mathbb N$. For any
subset $S$ of $A^k$ and any $N$ we consider the finite set $S(N)$ of all points whose coordinates are in
the interval $[-N,N]$. We define the density $\den(S)$ of $S$ as the limit
$\displaystyle \lim_{N\to\infty} \frac{|S(N)|}{|A^k(N)|}$ (we do not claim that it always exists).
Our goal is to calculate the density of the
set of generators of $A$.

\begin{definition}{\rm
Let $A$ be an algebra over a commutative ring $R$, and let $k$ be a positive
integer. We define the set $\Gen_{k}(A,R)$ as follows:
\[\Gen_{k}(A,R) = \{(a_1, \ldots, a_k) \in A^k: a_1, \ldots, a_k~\text{generate}~
A~\text{as an $R$-algebra} \}.\] }
\end{definition}

For the rest of the introduction, we assume that $R$ is an order in a number field $K$ and $A$ is an $R$-algebra
which is free of finite rank $m$ as an $R$-module
(unless stated otherwise).

In Theorem \ref{gendensity} we prove that the set $\Gen_{k}(A,R)$ has density, which we denote by $\den_k(A)$,
and that it can be computed locally as follows:
\begin{equation}\label{introlocal}
\den_{k}(A)=\prod_{ \f p \in \mspec R}\frac{|\Gen_{k}(A/\f pA,R/\f p)|}{
|R/\f p|^{mk}},
\end{equation}
where $\mspec R$ denotes the set of all maximal ideals of $R$.
In order to prove Theorem \ref{gendensity} we had to extend this local-to-global formula for
density to a substantially larger class of sets. This led us to Theorem~\ref{main}, which is of independent
interest and has potential applications to various other questions. Theorem~\ref{main} deals with a finite set
$f_1,\ldots,f_s$ of polynomials in $R[x_1,\ldots, x_n]$ and the set $S$ of all $a\in R^n$ such that the
ideal generated by $f_1(a),\ldots, f_s(a)$ is $R$. It asserts that the set $S$ has density $\den(S)$
and it is given by the following formula:
\[\den(S)= \prod_{ \f p\in \mspec R}\left(1-\frac{t_{\f p}}{|R/\f p|^n}\right),\]
where $t_{\f p}$ is the number of common zeros in $(R/\f p)^n$ of the polynomials $f_1\ldots,f_s$
considered  as polynomials over the field $R/\f p$.

As a first application of our results we answer in section \ref{kap} the following question posed by Ilya Kapovich:
what is the probability that $m$ random elements of a free abelian group of rank $n\leq m$
generate the group? Our results provide a rigorous proof
to the following answer: the probability in question is equal to
$\left( \prod_{k=m-n+1}^{m} \zeta(k)  \right)^{-1}$, where $\zeta$ is the Riemann
zeta-function (when $m=n$ this product should be interpreted as $0$).

In Section 5 we show how (\ref{introlocal}) can be used to get information about the smallest number
of generators of an $R$-algebra $A$.

\begin{definition} {\rm
Let $A$ be a finitely generated $R$-algebra. By $r(A,R)$
we denote the smallest number of generators of $A$ as an $R$-algebra.}
\end{definition}

In Theorem~\ref{denslenstra} we prove that if $k$ is an integer such that
$k\geq r_0:=r(A\otimes_R K,K)+1$ and $k\geq r_{\f p}:=r(A/\f pA, R/\f p R)$ for every maximal ideal $\f p$
of $R$ then $\den_k(A)>0$. Let $r_{f}$ be the largest among the numbers~$r_{\f p}$. Clearly, if $
\den_k(A)>0$ then $A$ can be generated by $k$ elements. Using this remark and Theorem~\ref{denslenstra}
we show in Theorem \ref{lenstra} that the smallest number of generators of $A$ coincides with $r_f$
if $r_f>r_0$ and it is either $r_0$ or $r_0+1$ otherwise. A special case of this result, when $R=\mathbb Z$,
was kindly communicated to us by H.W.~Lenstra \cite{lenstra}. Note that when $r_f=r_0$, we only know that
$r$ is either $r_0$ or $r_0+1$. Nevertheless, it is often possible to prove that $\den_{r_0}(A)>0$
and conclude that $r=r_0$. For example, we have been unable for a long time to find
the largest integer $n$ such that the product $\M_3(\mathbb Z)^n$
of $n$ copies of the matrix ring $\M_3(\mathbb Z)$ admits two
generators as a $\mathbb Z$-algebra. We knew that $n\leq 768$, but
any attempts to construct explicitly two generators for such large
values of $n$ have been beyond our computational ability. It turns
out though that we can prove that $\den_{2}(\M_3(\mathbb
Z)^{768})>0$, hence we get a (non-constructive) proof that
$n=768$ (see Theorem~\ref{3by3}).

In Theorem \ref{swan} we extend Lenstra's
original approach to obtain a similar formula for the smallest number of generators of algebras over
any commutative ring $R$ of dimension at most 1. This formula is reminiscent of the Forster-Swan Theorem
on the number of generators of modules over Noetherian commutative rings (\cite[Theorem 5.8]{matsumura}). By analogy with this result,
in Conjecture \ref{swan1} we propose an extension of our formula to algebras over
general Noetherian rings.

In order to use formula (\ref{introlocal}) in concrete cases one needs to be able to compute
the numbers $|\Gen_{k}(A/\f pA,R/\f p)|$. This leads us to results of Sections~\ref{s6} and \ref{s7}, where we
study these numbers under the assumption that $A/\f p A$ is a product of matrix algebras.
After various reductions in Section~\ref{s6} we derive explicit formulas for
$|\Gen_{k}(\M_n(\mathbb F),\mathbb F)|$,
where $\mathbb F$ is a finite field and $n=2,3$. Furthermore, we get a lower bound when $n>3$
(Proposition~\ref{lowerbound}  ). As a corollary, we prove that the probability that $m$ matrices
in $\M_n \left( \mathbb{F}_q \right)$, chosen under the uniform
distribution, generate the $\mathbb{F}_q$-algebra $\M_n \left( \mathbb{F}_q \right)$
tends to $1$ as $q+m+n \to \infty$ (see Corollary~\ref{infinity}).
This result proves and vastly generalizes the conjectural formula (17)
on. p.~27 of \cite{pet-sid}.
The case of $n=2$ and some of the results of Section 6 have been discussed earlier in an unpublished preprint
\cite{kp}, which was the starting point for the present work. This part of our paper  has been influenced
by ideas of Philip Hall \cite{hall}.

In Section~\ref{numfield} the results of Sections~\ref{s6} and \ref{s7} are applied to finite products
of matrix algebras over the ring of integers in a number field.

To state some of our remaining results, we need the following
notation.

\begin{definition}\label{defgen} {\rm Let $m,n \ge 1$ be integers and let $A$ be an $R$-algebra. We introduce the
following notation:
\begin{enumerate}[\rm (i)]
\item $\gen_m(A,R)$ is the largest $k\in \mathbb Z\cup\{\infty\}$ such that $r(A^k,R)\leq m$.

\item  $\gen_{m,n}(q)=  \gen_m \left(\M_n(\mathbb{F}_q),\mathbb{F}_q \right)$.

\item $\g_{m,n}(q)=|\Gen_{m}\left((\M_n(\mathbb F_q), \mathbb{F}_q  \right)|$.
\end{enumerate}}
\end{definition}

We show in Proposition~\ref{orbits} that
\[
\dd \gen_{m,n}(q) =\frac{\g_{m,n}(q)}{|\PGL_n(\mathbb{F}_q)|}\]

and $r(\M_n(\mathbb F_q)^{1+\gen_{m,n}(q)}, \mathbb F_q) = m+1 $ by Corollary~\ref{jump}.

Here are some special cases of our results in Section~\ref{numfield}.

{\bf{1.}} $\dd \gen_{m,2}(q) = \frac{{q}^{2\,m-1}\,\left( {q}^{m}-1\right) \,\left(
{q}^{m}-q\right)}{q^2-1}$.

{\bf{2.}} $\dd \gen_m(\M_2(\mathbb{Z}), \mathbb{Z}) = \gen_{m,2}(2) =
\frac{{2}^{2\,m-1} \, \left( {2}^{m}-2\right) \,\left(
{2}^{m}-1\right)}{3}$.

{\bf{3.}} $\dd \gen_{m,3}(q)=\frac{{q}^{3\,m-3}\,\left( {q}^{m}-1\right) \,\left(
{q}^{m}-q\right) \,\left( {q}^{m}+q\right) }{{\left( q-1\right) }^{2}\,\left(
q+1\right) \,\left( {q}^{2}+q+1\right) } \, \times $
$$\left( {q}^{3\,m}-{q}^{m+2}+{q}^{2\,m}-2\,{q}^{m+1}-{q}^{m}+{q}^{3}+{q}^{2}\right).$$

{\bf{4.}} $\dd \gen_m(\M_3(\mathbb{Z}), \mathbb{Z}) = \gen_{m,3}(2)=$
$$ \frac{\left( {2}^{m}-2\right) \,\left( {2}^{m}-1\right) \,\left( {2}^{m}+2\right)
\,\left( {2}^{3\,m}+{2}^{2\,m}-{2}^{m+3}-{2}^{m}+12\right) \,{2}^{3\,m-3}}{21}.$$

\vspace{3mm}
The techniques developed so far can be applied to any finitely generated $\mathbb Z$-algebra
whose reduction modulo every prime is a direct sum of matrix rings over finite fields.
However, among maximal orders in semi-simple algebras over $\mathbb Q$ the only such
algebras are the maximal orders in matrix rings by the
Hasse-Brauer-Noether-Albert Theorem. In order to extend our results to maximal orders in other semi-simple algebras
we need to obtain formulas for the number of generators of algebras over finite field which have
non-trivial Jacobson radical. This will be done in a subsequent paper.
Let us just mention here a special case,
when $A$ is a maximal order in the quaternion algebra $\mathbb Q(i,j)$ ($i^2=-1=j^2$).
For any odd prime $p$ we have $A/pA\cong \M_2(\mathbb F_p)$, so $A$ and $\M_2(\mathbb Z)$
differ only at the prime $2$ and at infinity. Note that $A/2A$ is a commutative algebra over $\mathbb F_2$
whose quotient modulo the Jacobson radical is the field $\mathbb F_4$. Since $\mathbb F_4^{16}$ cannot
be generated by $2$ elements, we see that $A^{16}$ requires at least three generators. It can be verified that
$A^{15}$ admits two generators. So $A$ can be distinguished from $\M_2(\mathbb Z)$ by counting
the smallest number of generators of powers of these two algebras. Note that for the integral quaternions
$\mathbb Z[i,j]$
already $\mathbb Z[i,j]^4$ requires at least three generators. In a subsequent paper we will extend this
observations to a much larger class of orders.

In another work in progress we apply the techniques developed in the present paper to study generators
of various non-associative algebras. Our technique applies to any finitely generated $R$-module equipped
with an $R$-bilinear form, but we focus mainly on Lie algebras and Jordan algebras. For example,
we show that the probability that $m$ random elements generate the Lie ring $\sl_2(\mathbb{Z})$ of $ 2 \times 2$
integer matrices with zero trace is equal to $\displaystyle \frac{1}{\zeta(m-1) \zeta(m)}$.

\paragraph*{\bf{Acknowledgments}}
It is our pleasure to thank Max Alekseyev, Nigel Boston, Evgeny Gordon, Rostislav Grigorchuk,  Ilya Kapovich,
Martin Kassabov, Hendrik Lenstra, Pieter Moree,
Tsvetomira Radeva, Peter Sarnak, Said Sidki, John Tate, and Paula Tretkoff.
The first author was supported by NSF
Grant DMS-0456185.
The
third author thanks the Max Planck
Institute for Mathematics for the warm hospitality, unique research opportunities, and financial support
during his visit
in July-August of 2009.

\section{Preliminary results}
{\em Let $R$ be a commutative ring with $1$. Unless stated otherwise,
all $R$-algebras are assumed to be associative, unital, and finitely generated as an $R$-module.}

In this section we collect several fairly straightforward observations which are used through the paper.
Let $A$ be an $R$-algebra. Recall that elements $a_1,\ldots,a_k$ generate $A$ as an $R$-algebra
if all the (non-commutative) monomials in $a_1,\ldots,a_k$, including the degree zero monomial $1$,
generate $A$ as an $R$-module. We say that $a_1,\ldots,a_k$ {\bf strongly generate} $A$ as an $R$-algebra
if already all the (non-commutative) monomials in $a_1,\ldots,a_k$ of positive degree generate $A$ as
an $R$-module.

\begin{nle}\label{*l1}
Suppose that there does not exist an $R$-algebra homomorphism $A\longrightarrow R/I$
for any proper ideal $I$ of $R$.
Then any set which generates $A$ as an $R$-algebra also strongly generates $A$.
\end{nle}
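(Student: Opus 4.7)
The plan is to consider the $R$-submodule $A' \subseteq A$ spanned by all non-commutative monomials of positive degree in a given generating set $a_1,\ldots,a_k$, and show that $A' = A$ by producing, in the contrary case, an $R$-algebra homomorphism $A \to R/I$ for a proper ideal $I$ of $R$, contradicting the hypothesis.

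First I would unpack the two notions of generation. Since $a_1,\ldots,a_k$ generate $A$ as an $R$-algebra, we have $A = A' + R\cdot 1$ as $R$-modules. The second step is the key structural observation: $A'$ is a two-sided ideal of $A$. Indeed, a product of two positive-degree monomials is again a positive-degree monomial, so $A' \cdot A' \subseteq A'$; combined with $A = A' + R\cdot 1$ and the fact that $R\cdot 1$ commutes with and acts on everything, this gives $A\cdot A' \subseteq A'$ and $A'\cdot A \subseteq A'$.

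Given that $A'$ is a two-sided ideal, the quotient $A/A'$ inherits an $R$-algebra structure. Because every $a_i$ lies in $A'$, the quotient $A/A'$ is generated as an $R$-algebra by the image of $1$ alone; equivalently, it is a cyclic $R$-module generated by the identity. Hence there is a natural isomorphism of $R$-algebras $A/A' \cong R/I$, where $I$ is the annihilator in $R$ of the image of $1$.

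Finally, I would close the argument by contradiction: if $A' \neq A$, then $R/I$ is nonzero, so $I$ is a proper ideal of $R$, and the composition $A \twoheadrightarrow A/A' \xrightarrow{\sim} R/I$ is an $R$-algebra homomorphism of the forbidden kind. This contradicts the hypothesis, so $A' = A$, i.e. $a_1,\ldots,a_k$ strongly generate $A$. I do not expect a significant obstacle here; the only subtle point is being careful that $A'$ really is a two-sided ideal (not merely a subalgebra), which depends on the harmless identity $A = A' + R\cdot 1$.
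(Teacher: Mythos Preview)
Your proof is correct and follows essentially the same route as the paper's: both form the $R$-submodule $A'$ (the paper calls it $J$) spanned by positive-degree monomials, observe that $A=R\cdot 1+A'$ and that $A'$ is a two-sided ideal, identify $A/A'\cong R/I$ (the paper writes $I=R\cap J$), and conclude by the hypothesis that $I$ cannot be proper. The only difference is that you spell out more explicitly why $A'$ is an ideal and phrase the ending as a contradiction, whereas the paper argues directly that $R\cdot 1\subseteq J$.
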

\begin{proof}
Suppose that $a_1,\ldots,a_k$ generate $A$ as an $R$ algebra and let $J$ be the $R$ submodule of $A$ generated
by all the (non-commutative) monomials in $a_1,\ldots,a_k$ of positive degree. Then $R\cdot 1+J=A$.
Since $J$ is closed under multiplication, it is a two sided ideal of $A$ and $A/J\cong R/(R\cap J)$.
By our assumption, $R\cap J$ cannot be a proper ideal of $R$, so $R\cdot 1 \subset J$ and $J=A$.
\end{proof}

\begin{example}\label{unity} {\rm Let the algebra $A = \prod_{i=1}^n \M_{m_k}(R)$ be a finite product of matrix algebras
over $R$, with each $m_i \ge 2$. Then any set which generates $A$ as an $R$-algebra also strongly generates $A$. This is a
direct consequence of Lemma \ref{*l1} and the remark that $A$ has no non-trivial commutative quotients.}
\end{example}

In this paper we decided to focus on unital algebras and we do not discuss strong generators.
However most of our results can be easily modified to sets of strong generators
and algebras which are not necessarily unital. One can also reduce questions about strong generators
to generators using the following observation. Recall that if $A$ is an $R$-algebra (unital or not)
we can construct a unital algebra $A^{(1)}$ which is $R\oplus A$ as an $R$-module with multiplication
defined by $(r,a)(s,b)=(ab+rb+sa,rs)$. We have the following lemma.

\begin{lemma}\label{add1}
Let $a_1,\ldots,a_k\in A$. Then the following conditions are equivalent:
\begin{enumerate}[\rm (1)]
\item  $a_1,\ldots,a_k$ strongly generate $A$ as an $R$-algebra.

\item $(r_1,a_1),\ldots,(r_k,a_k)$ generate $A^{(1)}$ as an $R$-algebra for any elements
$r_1,\ldots, r_k\in~R$.

\item  $(r_1,a_1),\ldots,(r_k,a_k)$ generate $A^{(1)}$ as an $R$-algebra for some elements
$r_1,\ldots, r_k\in R$.
\end{enumerate}
\end{lemma}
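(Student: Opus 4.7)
The plan is to prove all three equivalences together by carefully analyzing $R$-subalgebras of $A^{(1)}$. Under the canonical identification, $A$ sits inside $A^{(1)}$ as a two-sided ideal (the augmentation ideal), and $A^{(1)} = R \cdot 1_{A^{(1)}} \oplus A$ as an $R$-module; in these terms the element $(r,a)$ of the lemma corresponds to $r \cdot 1_{A^{(1)}} + a$. The key observation is that any unital $R$-subalgebra of $A^{(1)}$ contains $1_{A^{(1)}}$ and hence contains every scalar $r \cdot 1_{A^{(1)}}$. Consequently, for any $r_1,\dots,r_k \in R$, the $R$-subalgebra of $A^{(1)}$ generated by $(r_1,a_1),\dots,(r_k,a_k)$ coincides with the $R$-subalgebra generated by $a_1,\dots,a_k$ viewed inside $A^{(1)}$, since each generator differs from $a_i$ by an element of $R \cdot 1_{A^{(1)}}$.

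This immediately yields (2) $\Leftrightarrow$ (3), because the property ``these elements generate $A^{(1)}$'' does not depend on the choice of the $r_i$. To finish, I would establish (1) $\Leftrightarrow$ (3) by taking $r_i = 0$ and describing the $R$-subalgebra $B$ of $A^{(1)}$ generated by $a_1,\dots,a_k$. Since $A$ is a two-sided ideal of $A^{(1)}$ on which the ambient multiplication restricts to the original product in $A$, every positive-degree monomial in the $a_i$ computed in $A^{(1)}$ already lies in $A$ and equals the corresponding monomial $a_{i_1} \cdots a_{i_d}$ in $A$. Together with $1_{A^{(1)}}$ and $R$-linear combinations this gives
\[
B \;=\; R \cdot 1_{A^{(1)}} \;+\; J,
\]
where $J \subseteq A$ is the $R$-submodule spanned by all positive-degree monomials in $a_1,\dots,a_k$. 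In view of the direct-sum decomposition $A^{(1)} = R \cdot 1_{A^{(1)}} \oplus A$, we have $B = A^{(1)}$ if and only if $J = A$, which is precisely condition (1).

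The lemma poses no genuine obstacle; the only point worth isolating is that the identity of $A^{(1)}$ absorbs any ``scalar correction terms'' $r_i \cdot 1_{A^{(1)}}$, so the choice of the $r_i$ is irrelevant. The remaining computation is just bookkeeping with the unitalization multiplication, and the three equivalences drop out simultaneously from the single structural fact that an element and its augmentation-ideal part generate the same unital subalgebra.
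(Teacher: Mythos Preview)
Your proof is correct and uses essentially the same ingredients as the paper's: the identification of $A$ with the ideal $\{0\}\oplus A$, the observation that subtracting $r_i\cdot 1_{A^{(1)}}$ shows the choice of $r_i$ is irrelevant, and the direct-sum decomposition $A^{(1)}=R\cdot 1_{A^{(1)}}\oplus A$ to compare the generated subalgebra with $A^{(1)}$. The only difference is organizational---the paper argues the cycle $(1)\Rightarrow(2)\Rightarrow(3)\Rightarrow(1)$ and for $(3)\Rightarrow(1)$ works directly with arbitrary $r_i$ by noting that every positive-degree monomial in the $(r_i,a_i)$ has second component in the strongly generated subalgebra, whereas you first isolate $(2)\Leftrightarrow(3)$ and then handle $(1)\Leftrightarrow(3)$ with $r_i=0$; the underlying ideas are identical.
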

\begin{proof}
We identify $A$ with the ideal $\{0\}\oplus A$ in $A^{(1)}$. Assume $(1)$ and let $r_1,\ldots,r_k\in R$.
Since $(0,a_i)=(r_i,a_i)-r_i(1,0)$, the $R$-subalgebra $B$
of $A^{(1)}$ generated by $(r_1,a_1),\ldots,(r_k,a_k)$ contains all monomials in $(0,a_1),\ldots, (0,a_k)$,
hence it contains $A$. Since $B$ also contains $R\oplus \{0\}$, we see that $A^{(1)}=B$. Thus
$(1)$ indeed implies $(2)$. The condition $(3)$ is clearly a consequence of $(2)$. Assume $(3)$ and let $C$ be
the subalgebra of $A$ strongly generated by $a_1,\ldots,a_k$. Note that any monomial of positive degree
in $(r_1,a_1),\ldots,(r_k,a_k)$ is of the form $(r,c)$ for some $r\in R$ and $c\in C$. By the assumption in
$(3)$,
for any $a\in A$ there is $r\in R$ such that $(r,a)$ is an $R$-linear combination of
monomials of positive degree in $(r_1,a_1),\ldots,(r_k,a_k)$.
It follows that $a\in C$. Thus $C=A$, which shows that $(1)$ follows from $(3)$.
\end{proof}

The following observation is straightforward.

\begin{lemma}\label{add1mod}
Let $A$ be an $R$-algebra.
For any ideal $I$ of $R$ we have $A^{(1)}/IA^{(1)}=(A/IA)^{(1)}$, where the adjunction of unity on the right
is in the category of $R/I$-algebras.
\end{lemma}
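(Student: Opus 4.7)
The plan is to simply unravel the definition of the unitization functor and verify that it commutes with reduction modulo $I$ on the nose. Concretely, I would first identify $A^{(1)}$ with $R\oplus A$ as an $R$-module. Since $1_{A^{(1)}}=(1,0)$ lies in $R\oplus\{0\}$, for any $i\in I$ we have $i=i\cdot 1_{A^{(1)}}\in IA^{(1)}$, and for any $a\in A$ we have $ia=i\cdot(0,a)\in IA^{(1)}$. Conversely, every element of $IA^{(1)}$ is an $R$-linear combination of products $i\cdot(s,b)=(is,ib)$ with $i\in I$, so
\[
IA^{(1)}=I\oplus IA
\]
as an $R$-submodule of $R\oplus A$.

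Next I would deduce the $R$-module identification
\[
A^{(1)}/IA^{(1)}=(R\oplus A)/(I\oplus IA)\cong (R/I)\oplus(A/IA),
\]
which naturally carries an $R/I$-module structure since $I$ annihilates the quotient. Under this identification, the image of $(r,a)$ is $(\bar r,\bar a)$.

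The final step is to check that this identification is multiplicative and matches the algebra structure on $(A/IA)^{(1)}$ taken in the category of $R/I$-algebras. This is a direct computation: in $A^{(1)}/IA^{(1)}$ one has
\[
\overline{(r,a)}\cdot\overline{(s,b)}=\overline{(rs,\,ab+rb+sa)},
\]
while in $(A/IA)^{(1)}$, built over $R/I$, one has
\[
(\bar r,\bar a)(\bar s,\bar b)=(\bar r\bar s,\,\bar a\bar b+\bar r\bar b+\bar s\bar a),
\]
and the two agree termwise. The units also match, since $(1,0)\mapsto(\bar 1,0)$ is the unit in $(A/IA)^{(1)}$.

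There is essentially no obstacle here: the only thing to be slightly careful about is the observation that $I\subseteq IA^{(1)}$ (which requires using the unit of $A^{(1)}$), so that the quotient really splits as $(R/I)\oplus(A/IA)$ rather than as some extension. Once this is in place, the statement reduces to the tautology that the formula defining the unitization only involves the $R$-module and bilinear structure of $A$, both of which behave covariantly under reduction modulo $I$.
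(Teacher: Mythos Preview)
Your proof is correct. The paper does not actually prove this lemma; it simply declares it a ``straightforward observation'' and moves on. Your argument is precisely the routine verification one would expect: identify $IA^{(1)}$ with $I\oplus IA$ inside $R\oplus A$, pass to the quotient $(R/I)\oplus(A/IA)$, and check that the induced multiplication agrees with the unitization formula over $R/I$. There is nothing to compare.
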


\begin{definition}\label{Gen}{\rm
For an $R$-algebra $A$ and a positive integer $k$ we denote by $\Gen_{k}(A, R)$ the set of all $k$-tuples
$(a_1,\ldots,a_k)\in A^k$ which generate $A$ as an $R$-algebra. When there is no danger of confusion,
we write $\Gen_{k}(A)$ for $\Gen_{k}(A, R)$.}
\end{definition}


\begin{nle}\label{localgen}
Elements $a_1,\ldots,a_k$ generate $A$ as an $R$-algebra if and only if for every maximal ideal $\f m$ of $R$
the images of $a_1,\ldots,a_k$ in $A\otimes_{R} R/\f m =A/\f m A$ generate $A/\f mA$ as an $R/\f m$-algebra.
\end{nle}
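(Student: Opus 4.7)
The plan is to show the nontrivial implication by means of Nakayama's lemma applied to the quotient $R$-module $A/B$, where $B$ is the $R$-subalgebra of $A$ generated by $a_1,\ldots,a_k$.

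First, the ``only if'' direction is immediate: if monomials in $a_1,\ldots,a_k$ span $A$ as an $R$-module, then their images modulo $\mathfrak m A$ span $A/\mathfrak m A$ as an $R/\mathfrak m$-module, hence the images of $a_1,\ldots,a_k$ generate $A/\mathfrak m A$ as an $R/\mathfrak m$-algebra.

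For the ``if'' direction, let $B\subseteq A$ denote the $R$-subalgebra of $A$ generated by $a_1,\ldots,a_k$ and consider the quotient $R$-module $M:=A/B$. Since $A$ is finitely generated as an $R$-module (the standing assumption of Section~2), so is $M$. The hypothesis states that for every maximal ideal $\mathfrak m$ of $R$ the images of $a_1,\ldots,a_k$ generate $A/\mathfrak m A$ as an $R/\mathfrak m$-algebra, which translates into $B+\mathfrak m A=A$. Reducing modulo $B$ this gives $\mathfrak m M = M$ for every maximal ideal $\mathfrak m$ of $R$.

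Now I would finish by localizing: for any maximal ideal $\mathfrak m$, the $R_{\mathfrak m}$-module $M_{\mathfrak m}$ is finitely generated and satisfies $\mathfrak m R_{\mathfrak m}\cdot M_{\mathfrak m}=M_{\mathfrak m}$, so Nakayama's lemma over the local ring $R_{\mathfrak m}$ forces $M_{\mathfrak m}=0$. Since this holds at every maximal ideal, the standard local-to-global principle for finitely generated modules gives $M=0$, i.e.\ $B=A$, as desired. The only place to be careful is to ensure that finite generation of $A$ as an $R$-module is really available so that Nakayama applies; this is precisely the standing hypothesis of the section, and there is no further obstacle.
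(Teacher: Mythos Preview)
Your proof is correct and follows essentially the same approach as the paper's: both let $B$ (the paper calls it $J$) be the $R$-span of all monomials in $a_1,\ldots,a_k$, observe that the hypothesis forces $(A/B)\otimes_R R/\mathfrak m=0$ for every maximal ideal $\mathfrak m$, and conclude $A/B=0$ from the local-to-global principle for finitely generated modules. The only cosmetic difference is that the paper packages the last step as a single citation (\cite[Theorem 4.8]{matsumura}), whereas you unpack it via localization and Nakayama's lemma.
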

\begin{proof}
Let $J$ be the $R$ submodule of $A$ generated
by all the (non-commutative) monomials in $a_1,\ldots,a_k$. By \cite[Theorem 4.8]{matsumura}, $A=J$
iff $A/J\otimes_{R} R/\f m=0$ for every maximal ideal $\f m$ of $R$. The result follows from the simple
remark that $A/J\otimes_{R} R/\f m=0$ iff the images of $a_1,\ldots,a_k$ in $A/\f mA$ generate it as an $R/\f m$-algebra.
\end{proof}

\begin{nle}\label{fieldspan}
Let $R$ be a field and let $A$ be an $R$-algebra of dimension $m$.
Elements $a_1,\ldots,a_k$ generate $A$ as an $R$-algebra if and only if the (non-commutative) monomials in $a_1,\ldots,a_k$
of degree $< m$ span $A$ as an $R$-vector space.
\end{nle}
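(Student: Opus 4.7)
The plan is to exploit the standard stabilization argument for the ascending chain of subspaces spanned by monomials of bounded degree, combined with a dimension count. For each integer $d\geq 0$, let $V_d$ denote the $R$-linear span of all (non-commutative) monomials in $a_1,\ldots,a_k$ of degree at most $d$, so that $V_0=R\cdot 1$ and $V_d\subseteq V_{d+1}$ for all $d$. The union $V_\infty=\bigcup_d V_d$ is precisely the $R$-subalgebra of $A$ generated by $a_1,\ldots,a_k$, so the ``if'' direction of the lemma is immediate: if the monomials of degree $<m$ already span $A$, then $V_{m-1}=A$, and in particular $a_1,\ldots,a_k$ generate $A$.

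For the ``only if'' direction, the key observation is that the chain stabilizes as soon as two consecutive terms coincide. First I would check that if $V_d=V_{d+1}$ for some $d$, then $V_{d+j}=V_d$ for all $j\geq 0$. This is proved by induction on $j$: any monomial of degree $d+j+1$ can be written as $a_i\cdot w$ with $w$ a monomial of degree $d+j$; by induction $w\in V_d$, so $w$ is an $R$-linear combination of monomials of degree $\leq d$, and multiplying by $a_i$ produces an $R$-linear combination of monomials of degree $\leq d+1$, which lies in $V_{d+1}=V_d$.

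Next I would run the dimension count. Assuming $A\neq 0$, we have $\dim_R V_0=1$. Whenever $V_d\neq V_{d+1}$ we have $\dim_R V_{d+1}\geq \dim_R V_d+1$. Since $\dim_R A=m$, at most $m-1$ strict inclusions can occur in the chain $V_0\subseteq V_1\subseteq\cdots$; hence either $V_d=V_{d+1}$ for some $d\leq m-1$ (and the chain has already stabilized by step $m-1$), or the chain strictly increases through step $m-1$, in which case $\dim_R V_{m-1}\geq m$ and so $V_{m-1}=A$. In either case $V_{m-1}=V_\infty$. If $a_1,\ldots,a_k$ generate $A$, then $V_\infty=A$, and therefore the monomials of degree $<m$ already span $A$ as an $R$-vector space. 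The trivial case $A=0$ is handled separately (vacuously). There is no real obstacle here: the only subtlety is making sure the stabilization lemma is invoked cleanly rather than attempting to bound individual monomial degrees via a Cayley--Hamilton-style relation, which would require extra hypotheses the lemma does not assume.
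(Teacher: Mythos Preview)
Your argument is correct and follows essentially the same route as the paper: both define the filtration $V_d$ (the paper calls it $A_i$), observe that once two consecutive terms agree the chain stabilizes, and use the dimension bound $\dim_R A=m$ to force stabilization at or before $V_{m-1}$. Your write-up is slightly more verbose in splitting into cases, but the content is identical.
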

\begin{proof}
Let $A_i$ be the subspace of $A$ spanned by all the monomials in $a_1,\ldots,a_k$
of degree $\leq i$. Clearly $A_0\subseteq A_1\subseteq A_2\subseteq \dots$.
We also see that
\[A_{i+1}=A_i+a_1A_{i}+a_2A_{i}+\ldots+a_kA_i \]
for any $i$.
It follows that if $A_i=A_{i+1}$ for some $i$, then
$A_j = A_i$ for all $j\geq i$. Since $\dim_R A_{m} \leq m$, we must have $A_i=A_{i+1}$ for some $i<m$.
Thus $A_i=A_{m-1}$ for all $i\geq m$. This proves that $a_1,\ldots,a_k$ generate
$A$ as an $R$-algebra iff $A=A_{m-1}$.

\end{proof}

\begin{nle}\label{newspan}
Suppose  that $A$ can be generated by $m$ elements as an $R$-module. Elements $a_1,\ldots,a_k$ generate $A$ as an
$R$-algebra if and only if the (non-commutative) monomials in $a_1,\ldots,a_k$
of degree $< m$ generate $A$ as an $R$-module.
\end{nle}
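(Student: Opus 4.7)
My plan is to reduce to the field case handled by Lemma \ref{fieldspan} by passing to the quotient $A/\f p A$ for each maximal ideal $\f p$ of $R$. The ``if'' direction is immediate: if the monomials in $a_1,\ldots,a_k$ of degree $<m$ already generate $A$ as an $R$-module, then a fortiori $a_1,\ldots,a_k$ generate $A$ as an $R$-algebra.

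For the converse, assume $a_1,\ldots,a_k$ generate $A$ as an $R$-algebra, and let $M\subseteq A$ be the $R$-submodule spanned by all monomials in $a_1,\ldots,a_k$ of degree $<m$ (including the degree-zero monomial $1$). I want to prove $M=A$, which is the same as $A/M=0$. Since $A/M$ is a quotient of the finitely generated $R$-module $A$, it is itself finitely generated, so by \cite[Theorem 4.8]{matsumura} it suffices to show $(A/M)\otimes_R R/\f p = 0$ for every maximal ideal $\f p$ of $R$. Equivalently, the images of the degree-$<m$ monomials in $\bar a_1,\ldots,\bar a_k$ must span the $R/\f p$-vector space $A/\f p A$.

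To verify this, I will invoke Lemma \ref{localgen}, which guarantees that $\bar a_1,\ldots,\bar a_k$ generate $A/\f pA$ as an $R/\f p$-algebra. The hypothesis that $A$ is generated by $m$ elements as an $R$-module implies $d:=\dim_{R/\f p}(A/\f pA)\leq m$, and then Lemma \ref{fieldspan} applied to the $d$-dimensional $R/\f p$-algebra $A/\f pA$ yields that its monomials of degree $<d$ already span it; since $d\leq m$, the same conclusion holds a fortiori for monomials of degree $<m$, completing the reduction.

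There is no serious obstacle: the argument is a routine assembly of Lemma \ref{localgen}, Lemma \ref{fieldspan}, and the Nakayama-type observation \cite[Theorem 4.8]{matsumura}, all of which are already in place. The only substantive role of the hypothesis that $A$ is generated by $m$ elements as an $R$-module is to provide a uniform degree cutoff that works simultaneously across all residue fields $R/\f p$.
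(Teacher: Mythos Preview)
Your proof is correct and follows essentially the same route as the paper: reduce to showing $(A/M)\otimes_R R/\f p=0$ for every maximal ideal $\f p$ via \cite[Theorem~4.8]{matsumura}, then apply Lemma~\ref{fieldspan} to the $R/\f p$-algebra $A/\f pA$, whose dimension is at most $m$. The only cosmetic difference is that you cite Lemma~\ref{localgen} explicitly for the fact that the images $\bar a_i$ generate $A/\f pA$, whereas the paper treats this as immediate (it is the trivial direction of that lemma).
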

\begin{proof}
Suppose that $a_1,\ldots,a_k$ generate $A$ as an $R$-algebra
Let $A_i$ be the $R$-submodule of $A$ generated by all the monomials in $a_1,\ldots,a_k$
of degree $\leq i$. For any maximal ideal $\f m$ of $R$ the dimension of $A/\f m A$ over $R/\f m$
does not exceed $m$. Thus $A/A_{m-1}\otimes_{R} R/\f m=0$ for every maximal ideal $\f m$ of $R$
by Lemma \ref{fieldspan}. Hence $A=A_{m-1}$ by \cite[Theorem 4.8]{matsumura}.
\end{proof}

Recall that $\spec R$ is the set of all prime ideals of $R$ equipped with the Zariski topology and
$\mspec R$ is the subspace of $\spec R$ consisting of all maximal ideals. For $\f p \in \spec R$ we
denote by $R_{\f p}$ the localization of $R$ at the prime ideal $\f p$ and we set
$A_{\f p}=R_{\f p}\otimes_R A$. The residue field $R_{\f p}/\f p R_{\f p}$ is denoted by $\kappa(\f p)$.
Recall that $\kappa(\f p)$ coincides with the field of fractions of $R/\f p$.

\begin{definition}{\rm
We say that the elements $a_1,\ldots,a_k$ generate $A$ at a prime ideal $\f p$ of $R$ if their images
in $\kappa(\f p)\otimes_R A$ generate $\kappa(\f p)\otimes_R A$ as a $\kappa(\f p)$-algebra. Equivalently,
$a_1,\ldots,a_k$ generate $A$ at $\f p$ if their images in $A_{\f p}$ generate $A_{\f p}$ as an
$R_{\f p}$-algebra.}
\end{definition}

\begin{nle}\label{open}
Let $a_1,\ldots,a_k\in A$. The set of all prime ideals $\f p$ such that $a_1,\ldots,a_k$
generate $A$ at $\f p$ is open.
\end{nle}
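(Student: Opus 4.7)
The plan is to reduce the statement to the elementary fact that the support of a finitely generated $R$-module is closed in $\spec R$. Since $A$ is finitely generated as an $R$-module, fix a module generating set of size $m$; its image in any localization $A_{\f p}$ generates $A_{\f p}$ as an $R_{\f p}$-module. Applying Lemma \ref{newspan} with the commutative ring $R_{\f p}$ and the $R_{\f p}$-algebra $A_{\f p}$ (which is thus generated by $m$ elements as an $R_{\f p}$-module), one sees that the images of $a_1, \ldots, a_k$ generate $A$ at $\f p$ if and only if the finitely many monomials in $a_1, \ldots, a_k$ of degree less than $m$ generate $A_{\f p}$ as an $R_{\f p}$-module.

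Let $M \subseteq A$ denote the $R$-submodule spanned by all monomials in $a_1, \ldots, a_k$ of degree less than $m$. This is a finitely generated $R$-submodule of $A$, and for every $\f p \in \spec R$ its localization $M_{\f p}$ is exactly the $R_{\f p}$-submodule of $A_{\f p}$ generated by the images of these monomials. Hence the condition from the previous paragraph is equivalent to $M_{\f p} = A_{\f p}$, and therefore to $(A/M)_{\f p} = 0$.

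The quotient $N := A/M$ is a finitely generated $R$-module, being a quotient of $A$. For any such module the support
\[
\mathrm{Supp}(N) \;=\; \{\f p \in \spec R : N_{\f p} \neq 0\} \;=\; V(\mathrm{Ann}(N))
\]
is closed in the Zariski topology of $\spec R$. Its complement, which by the preceding reduction coincides with the set of primes at which $a_1, \ldots, a_k$ generate $A$, is therefore open.

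There is no real obstacle in this argument. The only point requiring a moment of care is the application of Lemma \ref{newspan} after localization, which is clean because the bound $m$ on the number of $R$-module generators of $A$ is inherited by every $A_{\f p}$, so that the algebra-generation question is converted into a module-generation question involving a fixed finite set of elements to which standard support-theoretic arguments apply.
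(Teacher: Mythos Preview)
Your proof is correct and follows essentially the same route as the paper: define the $R$-submodule $M$ (the paper calls it $B$) generated by monomials of degree $<m$, apply Lemma~\ref{newspan} over $R_{\f p}$ to see that generation at $\f p$ is equivalent to $(A/M)_{\f p}=0$, and conclude since the support of the finitely generated module $A/M$ is closed. Your explicit remark that the bound $m$ on module generators passes to each localization is a nice point of care that the paper leaves implicit.
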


\begin{proof}
Let $B$ be the $R$ submodule of $A$ generated by all monomials in $a_1,\ldots,a_k$ of degree $< m$,
where $m$ is such that $A$ can be generated by $m$ elements as an $R$-module. By Lemma \ref{newspan},
the images of $a_1,\ldots,a_k$ in $A_{\f p}$ generate $A_{\f p}$ as an $R_{\f p}$-algebra iff
$(A/B)_{\f p}=0$. Since the support of a finitely generated $R$-module is closed, the result follows.
\end{proof}

\begin{cl}\label{genopen}
For any positive integer $k$ the set
\[ U_k=\{\f p\in \spec R: A_{\f p} \text{can be generated by $k$ elements as an
$R_{\f p}$-algebra}\}
\]
is open.
\end{cl}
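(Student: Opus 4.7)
The plan is to realize $U_k$ as a union of open sets of the form supplied by Lemma~\ref{open}. More precisely, I want to show that for every $\f p\in U_k$ there exist elements $a_1,\ldots,a_k\in A$ (not just in $A_{\f p}$) whose images in $A_{\f p}$ generate $A_{\f p}$ as an $R_{\f p}$-algebra; once this is done, Lemma~\ref{open} produces an open neighborhood of $\f p$ contained in $U_k$.

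First, fix $\f p\in U_k$ and choose $b_1,\ldots,b_k\in A_{\f p}$ that generate $A_{\f p}$ as an $R_{\f p}$-algebra. Using the explicit description of the localization $A_{\f p}=R_{\f p}\otimes_R A$, I can write each $b_i$ as $a_i/s_i$ with $a_i\in A$ and $s_i\in R\setminus \f p$. Since each $s_i$ is a unit in $R_{\f p}$, the $R_{\f p}$-subalgebra of $A_{\f p}$ generated by the (images of the) elements $a_1,\ldots,a_k$ coincides with the one generated by $b_1,\ldots,b_k$, hence equals $A_{\f p}$.

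Now apply Lemma~\ref{open} to the tuple $(a_1,\ldots,a_k)$: the set
\[ W=\{\f q\in \spec R : a_1,\ldots,a_k \text{ generate $A$ at } \f q\}\]
is open in $\spec R$. By construction $\f p\in W$, and clearly $W\subseteq U_k$ since at each point of $W$ the algebra $A$ already has $k$ generators coming from $A$ itself. Thus every point of $U_k$ has an open neighborhood inside $U_k$, so $U_k$ is open.

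The only step requiring any care is the denominator-clearing argument, i.e.\ the passage from generators in $A_{\f p}$ to generators coming from $A$; after that observation the result is an immediate consequence of Lemma~\ref{open}.
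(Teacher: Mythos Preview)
Your proof is correct and follows essentially the same approach as the paper: both arguments clear denominators to replace generators of $A_{\f p}$ by elements of $A$ itself, and then invoke Lemma~\ref{open}. The only difference is that you spell out the denominator-clearing step explicitly, whereas the paper simply asserts that one may choose $a_1,\ldots,a_k\in A$ generating $A$ at $\f p$.
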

\begin{proof}
Suppose that $A_{\f p}$ is generated by $k$ elements as an $R_{\f p}$-algebra.
We may choose  elements $a_1,\ldots,a_k$ in $A$ which generate
$A$ at $\f p$. By Lemma \ref{open}, there is an open neighborhood
of $\f p$ such that $a_1,\ldots,a_k$ generate $A$ at $\f q$ for each
$\f q$ in this neighborhood.  This shows that $U_k$ is open.
\end{proof}

\begin{pro}\label{matrixgen}
Suppose that $A=\prod_{i=1}^s A_i$ is a product of $R$-algebras $A_1,\ldots,A_s$ such that
for any maximal ideal $\f m$ of $R$ and any $i\neq j$ the $R/\f m$-algebras
$A_i\otimes_{R} R/\f m$ and $A_j\otimes_{R} R/\f m$ do not have isomorphic quotients.
Then $\Gen_k(A)=\prod_{i=1}^s \Gen_k(A_i)$ under the natural identifications.

\end{pro}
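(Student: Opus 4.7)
The easy direction $\Gen_k(A)\subseteq\prod_i\Gen_k(A_i)$ is immediate from the fact that each projection $\pi_i\colon A\to A_i$ is a surjective $R$-algebra homomorphism, so a generating tuple of $A$ projects to a generating tuple of $A_i$.

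For the reverse inclusion, suppose $(a_1,\dots,a_k)\in A^k$ projects to a generating tuple in every $A_i$. By Lemma~\ref{localgen}, to conclude that $(a_1,\dots,a_k)$ generates $A$ it suffices to verify, for each maximal ideal $\f m$ of $R$, that the images $\bar a_1,\dots,\bar a_k$ generate the $R/\f m$-algebra $A/\f mA\cong\prod_{i=1}^s (A_i\otimes_R R/\f m)$ (using that finite products commute with tensor). Since reduction modulo $\f m$ carries generators of $A_i$ to generators of $A_i\otimes_R R/\f m$, we are reduced to the following field-theoretic claim: \emph{if $F$ is a field and $C=\prod_{i=1}^s C_i$ is a product of $F$-algebras no two of which admit a common nonzero $F$-algebra quotient, then any tuple whose projection to each $C_i$ generates $C_i$ in fact generates $C$.}

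To prove this claim, let $B\subseteq C$ be the $F$-subalgebra generated by the tuple and set $K_i=\ker(\pi_i|_B)$, so that $B/K_i\cong C_i$. For $i\ne j$ the algebra $B/(K_i+K_j)$ is simultaneously a quotient of $C_i$ and of $C_j$, so the hypothesis forces $K_i+K_j=B$. Thus the two-sided ideals $K_1,\dots,K_s$ of $B$ are pairwise comaximal, and the non-commutative Chinese Remainder Theorem furnishes an isomorphism $B/\bigcap_i K_i \xrightarrow{\sim} \prod_i B/K_i = C$. Because this isomorphism is induced by the inclusion $B\hookrightarrow C$ (whose kernel is exactly $\bigcap_i K_i$), we conclude $B=C$.

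The only technical point is the non-commutative CRT for pairwise comaximal two-sided ideals; the usual Bezout-style inductive proof carries over verbatim because sums of two-sided ideals remain two-sided and two-sidedness gives the necessary absorption in products. The hypothesis on isomorphic quotients enters the argument in exactly one place, namely to secure the comaximality relations $K_i+K_j=B$; everything else is a routine local-to-global reduction via Lemma~\ref{localgen}.
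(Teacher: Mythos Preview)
Your proof is correct and follows essentially the same route as the paper's: reduce to the field case via Lemma~\ref{localgen}, let $B$ be the subalgebra generated by the tuple, observe that the kernels $K_i$ of the projections $\pi_i|_B$ are pairwise comaximal because $B/(K_i+K_j)$ would otherwise be a nonzero common quotient of $C_i$ and $C_j$, and conclude $B=C$ by the Chinese Remainder Theorem. The paper's argument is identical in structure (with $J_i$ in place of your $K_i$); your write-up merely spells out a bit more explicitly that the CRT map $B\to\prod_i B/K_i$ coincides with the inclusion $B\hookrightarrow C$, which is a helpful clarification.
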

\begin{proof}
The proposition says that a sequence $a_1,\ldots,a_k$ of elements in
$A$ generates $A$ as an $R$-algebra iff for every $i$ the
projection of these sequence to $A_i$ generates $A_i$ as an
$R$-algebra. The implication to the right is clear.
Since $A\otimes_{R} R/\f m=\prod_{i=1}^s (A_i\otimes_{R} R/\f m)$,
Lemma \ref{localgen} reduces the proof to the case when $R$ is a field.
Suppose that a sequence $a_1,\ldots,a_k$ of elements in $A$
has the property that for every $i$ the
projection of these sequence to $A_i$ generates $A_i$ as an
$R$-algebra. Let $B$ be the $R$-subalgebra of $A$ generated by $a_1,\ldots,a_k$.
By our assumption, the projection $\pi_i:B\longrightarrow A_i$ is surjective.
Let $J_i=\ker \pi_i$. Since $A_i$ and $A_j$ have no isomorphic quotients for $i\neq j$,
we conclude that $J_i+J_j=B$ for $i\neq j$ (for otherwise, $J=J_i+J_j$ would be a proper ideal of $B$
and $B/J$ would be isomorphic to a quotient of $A_i$ and a quotient of $A_j$).
The Chinese Remainder Theorem implies now that $B=A$.

\end{proof}

\begin{example}\label{matrixproduct}
{\rm Let $A_i=\M_{n_i}(R)^{m_i}$ be the product of $m_i$ copies of the $n_i\times n_i$ matrix ring over $R$,
where $n_i\neq n_j$ for $i\neq j$. Then for any maximal ideal $\f m$ of $R$ we have
$A_i\otimes_{R} R/\f m=\M_{n_i}(R/\f m)^{m_i}$. Consider two distinct indexes $i$, $j$. If the $R/\f m$-algebras
$A_i\otimes_{R} R/\f m$ and $A_j\otimes_{R} R/\f m$  had isomorphic quotients, they would have
isomorphic quotients which are simple $R/\f m$-algebras.
Clearly any simple quotient of $\M_{n_i}(R/\f m)^{m_i}$
is isomorphic to $\M_{n_i}(R/\f m)$. Since $\M_{n_i}(R/\f m)$ and $\M_{n_j}(R/\f m)$ are not isomorphic
(they have different dimensions over $R/\f m$), we see that the $R/\f m$-algebras
$A_i\otimes_{R} R/\f m$ and $A_j\otimes_{R} R/\f m$ do not have isomorphic quotients. Therefore the assumptions of
Proposition \ref{matrixgen} are satisfied and
\[\Gen_k\left(\prod_{i=1}^{s}\M_{n_i}(R)^{m_i}\right)=\prod_{i=1}^s \Gen_k\left(\M_{n_i}(R)^{m_i}\right).
\]}
\end{example}

Recall that in Definition~\ref{defgen} we defined $\gen_m(A,R)$ as the largest $k$ such that
$A^k$ admits $m$ generators as an $R$-algebra. The following proposition implies that
if $\gen_m(A,R)$ is finite then $\gen_{m+1}(A,R)>\gen_m(A,R)$.

\begin{proposition}\label{genjump}
Let $A$ be an $R$-algebra and let $n$ be a positive integer. If $A^n$ can be generated by $m$ elements as an
$R$-algebra then $A^{n+1}$ can be generated by $m+1$ elements.
\end{proposition}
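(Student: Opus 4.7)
The plan is to construct $m+1$ explicit generators of $A^{n+1}$ from a given set of $m$ generators of $A^{n}$, using an idempotent as the extra element to ``separate'' the new coordinate from the old ones.

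Start with generators $b_1,\ldots,b_m$ of $A^n$ as an $R$-algebra, and write $b_i=(b_{i,1},\ldots,b_{i,n})$. Define
\[
c_i=(b_{i,1},\ldots,b_{i,n},b_{i,1})\in A^{n+1}\quad (1\le i\le m),\qquad c_{m+1}=(1,\ldots,1,0)\in A^{n+1}.
\]
I claim $c_1,\ldots,c_{m+1}$ generate $A^{n+1}$. Let $B$ be the $R$-subalgebra they generate. The element $c_{m+1}$ is idempotent, and crucially $c_{m+1}c_i=(b_{i,1},\ldots,b_{i,n},0)$. More generally, for any noncommutative monomial $w$ in $m$ variables one has $c_{m+1}\,w(c_1,\ldots,c_m)=(w(b_1,\ldots,b_m),0)$ under the identification $A^{n+1}=A^n\times A$. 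Since the $w(b_1,\ldots,b_m)$ together with $1$ span $A^n$ as an $R$-module, taking $R$-linear combinations shows that $A^n\times\{0\}\subseteq B$.

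Next I would produce the other ``block'' $\{0\}^n\times A$. Observe that $1_{A^{n+1}}-c_{m+1}=(0,\ldots,0,1)\in B$, and
\[
c_i-c_{m+1}c_i=(0,\ldots,0,b_{i,1})\in B\quad (1\le i\le m).
\]
Products and $R$-combinations of these elements give $\{0\}^n\times C$, where $C$ is the $R$-subalgebra of $A$ generated by $b_{1,1},\ldots,b_{m,1}$. But the projection $\pi\colon A^n\to A$ to the first coordinate is a surjective $R$-algebra homomorphism, so $\pi(b_1),\ldots,\pi(b_m)=b_{1,1},\ldots,b_{m,1}$ generate $\pi(A^n)=A$; that is, $C=A$. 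Hence $\{0\}^n\times A\subseteq B$. Combined with the previous step, $B\supseteq (A^n\times\{0\})+(\{0\}^n\times A)=A^{n+1}$, and we are done.

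There is no real obstacle in this argument once one has the idea of using an idempotent to detach the new coordinate; the only subtlety is choosing the last entries of the $c_i$ so that, after ``forgetting'' the first $n$ coordinates, what remains still generates $A$. This is automatic if the last entries are taken to be the images of $b_i$ under a single coordinate projection $A^n\to A$, since that projection is surjective.
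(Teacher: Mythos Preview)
Your proof is correct and follows essentially the same approach as the paper's: the paper uses the extra generator $(0,\ldots,0,1)$ while you use the complementary idempotent $(1,\ldots,1,0)=1_{A^{n+1}}-(0,\ldots,0,1)$, but since the algebra is unital these generate the same subalgebra together with the $c_i$, and the remaining steps (multiplying by the idempotent to isolate the first $n$ coordinates, then using that the first-coordinate projections of the $b_i$ generate $A$) are the same in both arguments.
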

\begin{proof}
Let $a_i=(a_{i,1},\ldots,a_{i,n})$, $i=1,\ldots,m$ generate $A^n$.  Let $b_i=(a_{i,1},....,a_{i,n},a_{i,1})$,
$i=1,\ldots,m$
and set $b_{m+1}=(0,\ldots,0,1)$. For any $w=(w_1,\ldots,w_n)\in A^n$ there is a non-commutative polynomial
$p(x_1,\ldots,x_m)$ with coefficients in $R$
such that $w=p(a_1,\ldots, a_m)$. Then $p(b_1,\ldots, b_m)=(w_1,\ldots,w_m,w_1)$.
It follows that $b_{m+1}p(b_1,\ldots, b_m)=(0,\ldots,0,w_1)$
and $p(b_1,\ldots, b_m)-b_{m+1}p(b_1,\ldots, b_m)=(w_1,\ldots,w_m,0)$. Thus the algebra generated by
$b_1,\ldots,b_{m+1}$ coincides with $A^{n+1}$.
\end{proof}

\begin{corollary}\label{jump}
Let $A$ be an $R$-algebra. If $\gen_m(A,R)$ is finite then $r(A^{1+\gen_m(A,R)},R)=m+1$.
\end{corollary}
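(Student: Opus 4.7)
The plan is to unpack the definition of $\gen_m(A,R)$ and combine it with Proposition~\ref{genjump}. Set $N:=\gen_m(A,R)$, and assume $N<\infty$. By Definition~\ref{defgen}, $N$ is the largest integer $k$ (in $\mathbb Z\cup\{\infty\}$) such that $r(A^k,R)\le m$. So I would first extract the two pieces of information this definition provides: on one hand, $r(A^N,R)\le m$; on the other hand, by maximality of $N$, we have $r(A^{N+1},R)>m$, i.e.\ $r(A^{N+1},R)\ge m+1$.

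Next, I would apply Proposition~\ref{genjump} to the inequality $r(A^N,R)\le m$. That proposition says: whenever $A^n$ admits $m$ generators as an $R$-algebra, $A^{n+1}$ admits $m+1$ generators. Taking $n=N$, we conclude $r(A^{N+1},R)\le m+1$.

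Combining the lower bound $r(A^{N+1},R)\ge m+1$ obtained from the definition of $N$ with the upper bound $r(A^{N+1},R)\le m+1$ obtained from Proposition~\ref{genjump} yields the equality $r(A^{1+\gen_m(A,R)},R)=m+1$. There is no real obstacle here; the whole point of the corollary is to package Proposition~\ref{genjump} together with the extremal definition of $\gen_m$, and the only thing to be careful about is to note explicitly that the finiteness hypothesis on $\gen_m(A,R)$ is what makes ``$N+1$'' a well-defined integer and what legitimately gives the strict inequality at $N+1$.
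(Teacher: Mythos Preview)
Your argument is correct and is exactly the intended one: the paper states the corollary without proof, as it follows immediately from Definition~\ref{defgen} (giving both $r(A^N,R)\le m$ and $r(A^{N+1},R)>m$) together with Proposition~\ref{genjump} (giving $r(A^{N+1},R)\le m+1$). There is nothing to add.
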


We end this section with a discussion of an effective method of checking if given elements generate
an $R$-algebra $A$. The key observation is contained in the following simple Lemma:

\begin{nle}\label{monomials}
Let $A$ be an $R$-algebra generated as $R$-module by elements $u_1,\ldots,u_m$ and let $k\geq 1$
be an integer. For every monomial $M=M(x_1, \ldots , x_k)$ in $k$ non-commuting
variables $x_1,\ldots , x_k$ there are polynomials $p_j^M(x_{1,1},\ldots,x_{k,m}) \in
R\left[x_{1,1}, \ldots ,x_{k,m}\right]$, $j=1,\ldots,m$,  such that the degree of each $p_j^M$
does not exceed the degree of $M$ and
\[M(a_1,\ldots,a_k)=\sum_{i=1}^{m}p_i^M(a_{1,1},\ldots,a_{k,m})u_i\]
whenever $a_{i,j}\in R$ satisfy $a_i=\sum_{j=1}^m a_{i,j}u_j$.
\end{nle}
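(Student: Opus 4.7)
The plan is a straightforward induction on the degree of the monomial $M$. The key input is the existence of a multiplication table: since $u_1,\ldots,u_m$ generate $A$ as an $R$-module, there are structure constants $c_{i',i}^{\ell}\in R$ with
\[u_{i'}u_i=\sum_{\ell=1}^{m} c_{i',i}^{\ell}\, u_\ell,\]
and, since $1\in A$, there are $c_1,\ldots,c_m\in R$ with $1=\sum_{i=1}^{m} c_i u_i$.

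For the base case $\deg M=0$ we have $M=1$, and the required identity holds with $p_i^M=c_i$, which are constants of degree $0$. For $\deg M=1$ we have $M=x_j$ for some $j$, and the hypothesis $a_j=\sum_{i=1}^{m} a_{j,i}u_i$ lets us take $p_i^M=x_{j,i}$, each of degree $1$.

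For the inductive step, I would write any monomial $M$ of degree $d\geq 1$ as $M=x_j\cdot M'$ where $\deg M'=d-1$. Applying the inductive hypothesis to $M'$ gives polynomials $p_i^{M'}$ of degree at most $d-1$ with
\[M'(a_1,\ldots,a_k)=\sum_{i=1}^{m} p_i^{M'}(a_{1,1},\ldots,a_{k,m})\,u_i.\]
Multiplying on the left by $a_j=\sum_{i'=1}^{m} a_{j,i'}u_{i'}$ and then reducing each product $u_{i'}u_i$ via the multiplication table yields
\[M(a_1,\ldots,a_k)=\sum_{\ell=1}^{m}\Biggl(\sum_{i',i=1}^{m} c_{i',i}^{\ell}\, x_{j,i'}\, p_i^{M'}\Biggr)(a_{1,1},\ldots,a_{k,m})\,u_\ell,\]
so the polynomials $p_\ell^M:=\sum_{i',i} c_{i',i}^{\ell}\, x_{j,i'}\, p_i^{M'}$ do the job. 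Each summand has degree at most $1+(d-1)=d=\deg M$, which preserves the degree bound.

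There is no real obstacle here; the only point that requires a little care is the degree-$0$ base case, where one must remember that $1\in A$ need not be among the $u_i$'s but is still in their $R$-span, so the $p_i^M$ must be allowed to be arbitrary constants rather than $0$. Once that is noted, the induction and the degree bookkeeping are routine.
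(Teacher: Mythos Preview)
Your proof is correct and essentially identical to the paper's: both introduce structure constants and a representation of $1$, then induct on $\deg M$ by stripping off one variable and using the multiplication table. The only cosmetic difference is that you factor $M=x_j M'$ (leftmost variable) while the paper writes $M=Nx_t$ (rightmost), which of course makes no difference.
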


\begin{proof}
There exist elements $c_{i,j,s}\in R$, $1\leq i,j,s\leq m$ such that
$u_iu_j=\sum_{s=1}^m c_{i,j,s}u_s$. Note that these elements are not unique, unless
$A$ is a free $R$-module with basis $u_1,\ldots,u_m$ (this is the case we are mainly interested in).
We fix some choice of elements $c_{i,j,s}$ and call them the structure constants for $A$.
Furthermore, choose and fix $r_i\in R$, $i=1,\ldots,m$ such that $1=\sum r_i u_i$.
We prove the lemma by induction on the degree of $M$. If degree of $M$ is $0$ then $M=1$
and we can choose constant polynomials $p_i^M=r_i$. Suppose that the lemma holds for
all monomials of degree less than $n$ and let $M$ be a monomial of degree $n$.
Then $M=Nx_{t}$ for some monomial $N$ of degree $n-1$ and some $t\in \{1,\ldots,k\}$.
If $a_i=\sum_{j=1}^m a_{i,j}u_j$, with $a_{i,j}\in R$, $1\leq i\leq k$, then
\[M(a_1,\ldots,a_k)=N(a_1,\ldots,a_k)\sum_{j=1}^m a_{t,j}u_j=\]
\[=\left(\sum_{i=1}^{m}p_i^N(a_{1,1},\ldots,a_{m,k})u_i\right)
\left(\sum_{j=1}^m a_{t,j}u_j\right)=\]
\[=\sum_{i=1}^{m}\sum_{j=1}^mp_i^N(a_{1,1},\ldots,a_{m,k})a_{t,j}\sum_{s=1}^m c_{i,j,s}u_s =\]
\[=\sum_{s=1}^m \left(\sum_{i=1}^{m}\sum_{j=1}^mp_i^N(a_{1,1},\ldots,a_{m,k})a_{t,j}c_{i,j,s}\right)u_s.
\]
This proves that the polynomials
\[ p_s^M= \sum_{i=1}^{m}\sum_{j=1}^mc_{i,j,s}p_i^Nx_{t,j}, \ \ s=1,\ldots,m\]
have the required properties.
\end{proof}

\begin{nle}\label{polynomialsgen}
Let $A$ be an $R$-algebra which is a free $R$-module with a basis $u_1,\ldots,u_m$ and let $k\geq 1$
be an integer. There is a finite set $T\subseteq
R\left[x_{1,1}, \ldots ,x_{k,m}\right]$ of polynomials of degree not exceeding $m^2$
such that for any commutative $R$-algebra $S$ the elements $a_i=\sum_{j=1}^m a_{i,j}\otimes u_j$,
$1\leq i\leq k$, of
$S\otimes_R A$, where $a_{i,j}\in S$,
generate $S\otimes_R A$ as an $S$-algebra if and only if the ideal of $S$ generated by all the values
$f(a_{1,1},\ldots ,a_{k,m})$, $f\in T$, coincides with $S$.
\end{nle}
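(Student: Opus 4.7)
The plan is to combine Lemma~\ref{newspan} (applied over $S$) with Lemma~\ref{monomials} and the standard characterization of surjective homomorphisms between free modules in terms of minors. Since $S\otimes_R A$ is free of rank $m$ over $S$ with basis $1\otimes u_1,\ldots,1\otimes u_m$, Lemma~\ref{newspan} tells us that $a_1,\ldots,a_k$ generate $S\otimes_R A$ as an $S$-algebra if and only if the monomials in $a_1,\ldots,a_k$ of degree less than $m$ generate $S\otimes_R A$ as an $S$-module.

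Next, I would apply Lemma~\ref{monomials} over $S$, with structure constants inherited from the chosen basis $u_1,\ldots,u_m$ of $A$: for every monomial $M$ in $k$ non-commuting variables there are polynomials $p_i^M\in R[x_{1,1},\ldots,x_{k,m}]$ of degree at most $\deg M$ such that
\[
M(a_1,\ldots,a_k)=\sum_{i=1}^m p_i^M(a_{1,1},\ldots,a_{k,m})\,(1\otimes u_i).
\]
Let $\mathcal N$ be the (finite) set of monomials of degree less than $m$, and let $P$ be the matrix with rows indexed by $\mathcal N$ and columns indexed by $\{1,\ldots,m\}$, whose entry in row $M$, column $i$ is $p_i^M$. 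By the identification above, the $S$-submodule of $S\otimes_R A$ generated by $\{M(a_1,\ldots,a_k):M\in\mathcal N\}$ equals the image of the $S$-linear map $S^{\mathcal N}\to S^m$ given by the matrix $P$ evaluated at the $a_{i,j}$.

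The final step is the standard determinantal criterion: an $S$-linear map $S^{\mathcal N}\to S^m$ is surjective if and only if the ideal of $S$ generated by all $m\times m$ minors of its matrix coincides with $S$. (This follows, for example, from the fact that the cokernel is zero locally at every maximal ideal $\f m$ of $S$ if and only if the minors generate the unit ideal, combined with Lemma~\ref{localgen}-style localization.) I therefore take $T$ to be the finite set of $m\times m$ minors of $P$. Each entry of $P$ has degree at most $m-1$, so every minor has degree at most $m(m-1)<m^2$, which gives the claimed bound.

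The only point requiring any care is the degree count and the determinantal criterion for surjectivity; both are standard, and there is no serious obstacle. In particular, no hypothesis on $S$ beyond commutativity is needed, since the assertion about minors generating the unit ideal is valid for any commutative ring.
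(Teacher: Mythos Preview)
Your argument is correct and follows essentially the same route as the paper's proof: build the matrix of coefficients of monomials of degree $<m$ via Lemma~\ref{monomials}, take $T$ to be its $m\times m$ minors, and use the determinantal criterion for surjectivity of $S^{\mathcal N}\to S^m$ (which the paper spells out by reducing modulo each maximal ideal of $S$ via Lemmas~\ref{newspan} and~\ref{localgen}). Your observation that the minors have degree at most $m(m-1)$ is in fact slightly sharper than the bound $m^2$ stated in the lemma, but otherwise the two arguments coincide.
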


\begin{proof}
Consider polynomials $p_j^M$ described in Lemma \ref{monomials}. It is clear that the same polynomials
(or rather their images in $S\left[x_{1,1}, \ldots ,x_{k,m}\right]$) work for the $S$ algebra
$S\otimes_R A$ and its generators $1\otimes u_1,\ldots, 1\otimes u_m$.
Let ${\mathcal M}={\mathcal M}(x_{i,j})$ be the matrix whose rows are labelled in some way by the monomials $M$
of degree $< m$
in non-commuting variables $x_{1}, \ldots, x_{k}$, and whose row with label
$M$ is
\[
\left(p_1^M \left(x_{1,1}, \ldots ,x_{k,m} \right), \ldots ,p_m^M \left(x_{1,1},\ldots ,x_{k,m} \right) \right).
\]
The $m\times m$ minors of ${\mathcal M}$
are polynomials in $R\left[x_{1,1}, \ldots ,x_{k,m}\right]$ of degree $\leq m^2$. Consider the set $T$ of all
these minors. Consider elements $a_i=\sum_{j=1}^m a_{i,j}\otimes u_j\in S\otimes_R A$, where $a_{i,j}\in S$
and $1\leq i\leq k$.
Let $B$ be the set of all elements of the form $M(a_1,\ldots,a_k)$, where $M$ is a monomial of degree $< m$.
By Lemmas \ref{newspan} and \ref{localgen}, the elements $a_1,\ldots,a_k$ generate
$S\otimes_R A$ as an $S$-algebra if and only if for every maximal ideal $\f m$ of $S$, the image of the set $B$
in $S\otimes_R A/\f m (S\otimes_R A)$ spans the $S/\f m$-vector space $S\otimes_R A/\f m (S\otimes_R A)$. This is
equivalent to saying that the
reduction modulo $\f m$ of the matrix ${\mathcal M}(a_{i,j})$ has rank $m$, which in turn is
equivalent to the condition that at least one of the $m\times m$ minors of ${\mathcal M}(a_{i,j})$
does not belong to $\f m$. Thus the set $T$ of all the $m\times m$ minors of ${\mathcal M}(x_{i,j})$
has the required property.
\end{proof}

\section{The density of the set of ordered $k$-tuples which generate an algebra}\label{s3}
The results of this section arose from our attempt to answer the following question: what is the probability
that $k$ random elements of a ring $A$, whose additive group is free of finite rank, generate $A$ as a ring.
Before we answer this question, we need to make it more precise.
We will discuss it in a slightly more general context.

Throughout this section $K$ will be a number field of degree $d$ over $\mathbb Q$,
with the ring of integers $O_K$. We work with an order $R$ in $K$, i.e. $R$ is a subring of $K$
which is free of rank $d$ as a $\mathbb Z$-module. We fix an integral basis $w_1,\ldots,w_d$ of
$R$ over $\mathbb Z$. Any element $r$ of $R$ can be uniquely
written as $r=\sum r_i w_i$ with $r_i\in \mathbb Z$. For a
positive integer $N$ we denote by $R(N)$ the set of all $r\in R$
such that $|r_i|\leq N$ for all $i$. Clearly $|R(N)|=(2N+1)^d$.

Let $A$ be an $R$-algebra which is free of finite rank $m$ as an $R$-module. Fix a basis $e_1,\ldots,e_m$ of $A$
over $R$.
This choice allows us to identify $A$ and $R^m$. Using this identification we define $A(N)$ as $R^m(N)$, so
$|A(N)|=(2N+1)^{dm}$.
We define the density $\den_k(A)$ of the set of $k$ generators of $A$
as an $R$-algebra as follows:
\begin{definition}{\rm
\[ \den_k(A)=\lim_{N\to\infty}\frac{|\Gen_k(A)\cap A(N)^k|}{(2N+1)^{dmk}}.
\]}
\end{definition}
At the moment it is not clear whether the limit on the right of the last formula exists. We will show, however,
that it exists and it is independent of the choice of an integral basis of $R$ and the choice of a basis
of $A$ over $R$.

Consider a maximal ideal $\f p$ of $R$. We denote by $\mathbb F_{\f p}$ the field $R/\f p$ and by $\N(\f p)$
its cardinality.
Recall that we say that elements $a_1,\ldots, a_k$ of $A$
generate $A$ at $\f p$ if their images in $A\otimes_R \mathbb F_{\f p}$ generate $A\otimes_R \mathbb F_{\f p}$
as an $\mathbb F_{\f p}$-algebra. Let $g_k(\f p, A)$ be the cardinality of the set
$\Gen_k(A\otimes_R \mathbb F_{\f p})$. In other words, $g_k(\f p, A)$
is the number of $k$-tuples of elements of $A\otimes_R \mathbb F_{\f p}$ which generate
$A\otimes_R \mathbb F_{\f p}$ as an $\mathbb F_{\f p}$-algebra.
It is not hard to see that the density of the set
$\Gen_k(\f p, A)$ of
all $k$-tuples in $A^k$ which generate $A$ at $\f p$ is
\[ \lim_{N\to\infty}\frac{|\Gen_k(\f p,A)\cap A(N)^k|}{(2N+1)^{dmk}}=\frac{g_k(\f p, A)}{\\N(\f p)^{mk}}.\]
Note that by Lemma \ref{localgen}, a given $k$-tuple of elements of $A$ generates it as an $R$-algebra iff
it generates $A$ at $\f p$ for every maximal ideal $\f p$ of $R$. Suppose now that the events
``generate at $\f p$" are independent for different maximal ideals (we use this notion in a very intuitive
sense here, just to motivate our result). It would mean that the probability that random $k$ elements of $A$
generate it as an $R$-algebra is the product of the numbers $g_k(\f p, A)/\N(\f p)^{mk}$ over all maximal
ideals $\f p$ of $R$. One of the main results of this section is a rigorous proof that this is indeed true.
In other words, we prove the following theorem

\begin{te}\label{gendensity}
Let $A$ be an $R$-algebra which is free of rank $m$ as an $R$-module and let $k>0$ be an integer.
For a maximal ideal $\f p$ of $R$ denote by $g_k(\f p, A)$ the number of $k$-tuples of elements
of $A\otimes_R \mathbb F_{\f p}$ which generate
$A\otimes_R \mathbb F_{\f p}$ as an $\mathbb F_{\f p}$-algebra. Then
\begin{equation}\label{dens1}
 \den_k(A)=\prod_{\f p\in \mspec R}\frac{g_k(\f p, A)}{\N(\f p)^{mk}}.
\end{equation}
\end{te}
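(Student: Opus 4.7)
The plan is to reduce Theorem~\ref{gendensity} to Theorem~\ref{main} by translating the algebra-generation condition into an ideal-generation condition on values of finitely many polynomials.

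First, I would invoke Lemma~\ref{polynomialsgen} with $S = R$, using the fixed $R$-basis $e_1,\ldots,e_m$ of $A$. This supplies a finite collection $T = \{f_1,\ldots,f_s\} \subseteq R[x_{1,1},\ldots,x_{k,m}]$ such that elements $a_i = \sum_j a_{i,j}e_j$ (with $a_{i,j}\in R$) generate $A$ as an $R$-algebra if and only if the ideal of $R$ generated by $f_1(a_{i,j}),\ldots,f_s(a_{i,j})$ equals $R$. Under the identification $A^k \cong R^{km}$ via the basis, $\Gen_k(A)$ is precisely the set to which Theorem~\ref{main} applies with $n = km$ variables; and by definition $A(N)^k$ coincides with the box $R^{km}(N)$ used in that statement, so the two definitions of density agree. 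An application of Theorem~\ref{main} then yields
\[
\den_k(A) = \prod_{\f p \in \mspec R}\left(1 - \frac{t_{\f p}}{\N(\f p)^{km}}\right),
\]
where $t_{\f p}$ counts the common zeros in $\mathbb F_{\f p}^{km}$ of the reductions of $f_1,\ldots,f_s$ modulo $\f p$.

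The second step is to identify $1 - t_{\f p}/\N(\f p)^{km}$ with $g_k(\f p, A)/\N(\f p)^{mk}$. For this I would apply Lemma~\ref{polynomialsgen} a second time, now with $S = \mathbb F_{\f p}$. The polynomials produced by the construction in that lemma depend only on the structure constants of $A$ and on a fixed expansion of $1$ in the chosen basis, both of which behave well under reduction mod $\f p$ and remain valid for $\mathbb F_{\f p}\otimes_R A$. Consequently, a tuple $(\bar a_{i,j}) \in \mathbb F_{\f p}^{km}$ corresponds to a non-generating tuple in $\mathbb F_{\f p}\otimes_R A$ exactly when the ideal of $\mathbb F_{\f p}$ generated by the values $\bar f_\ell(\bar a_{i,j})$ is proper; since $\mathbb F_{\f p}$ is a field, this is equivalent to the simultaneous vanishing of every $\bar f_\ell$ at $(\bar a_{i,j})$. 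Hence $t_{\f p} = \N(\f p)^{km} - g_k(\f p, A)$, and substitution produces formula~(\ref{dens1}).

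The genuine difficulty lies entirely in Theorem~\ref{main}, which handles the convergence of the infinite product and the passage from the truncated counts $|\Gen_k(A)\cap A(N)^k|$ to the local densities at each $\f p$. Granted that theorem, Theorem~\ref{gendensity} is a clean specialization: the only items to verify are that Lemma~\ref{polynomialsgen} delivers the polynomials $f_\ell$ with the required properties, that the coordinate conventions make the two definitions of density agree, and that reduction modulo $\f p$ of the $f_\ell$ correctly detects the non-generating locus in $A\otimes_R \mathbb F_{\f p}$.
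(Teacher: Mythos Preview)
Your proposal is correct and follows essentially the same route as the paper: invoke Lemma~\ref{polynomialsgen} to produce the finite set $T$ of polynomials, identify $A^k$ with $R^{mk}$ so that $\Gen_k(A)$ becomes the set $S(T)$ of Theorem~\ref{main}, apply that theorem, and then use Lemma~\ref{polynomialsgen} again with $S=\mathbb F_{\f p}$ to identify $\N(\f p)^{mk}-t_{\f p}$ with $g_k(\f p,A)$. The paper presents exactly this argument in the paragraph immediately following the statement of Theorem~\ref{gendensity}.
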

Note that this result establishes in particular the existence and independence of all the choices
of the limit defining the quantity $\den_k(A)$.

We will derive Theorem \ref{gendensity} as a consequence of a more general result. To this end
consider the set $T=\{f_1,\ldots,f_s\}$ of polynomials in $R[x_{1,1},\ldots,x_{k,m}]$ established in
Lemma \ref{polynomialsgen}
(under our choice of a basis of $A$ over $R$). We identify $A^k$ with the set $R^{mk}$ so that
a tuple $(a_1,\ldots,a_k)\in A^k$ corresponds to $(a_{i,j})\in R^{mk}$ iff $a_i=\sum_{j=1}^{m}a_{i,j}e_j$.
Note that according to Lemma \ref{polynomialsgen}, the element $a=(a_{i,j})\in R^{mk}$ corresponds to a
$k$-tuple in $\Gen_k(A)$ iff the ideal of $R$ generated by the elements $f_1(a),\ldots,f_s(a)$ coincides
with $R$. Moreover, $a$ corresponds to a $k$-tuple which generates $A$ at $\f p$ iff $f_i(a)\not\in \f p$
for some $i$. It follows that $g_k(\f p, A)=\N(\f p)^{mk}-t_{\f p}$, where $t_{\f p}$
is the number of solutions to $f_1=\ldots=f_s=0$ over the finite field $\mathbb F_{\f p}$. It is clear now
that Theorem \ref{gendensity} is a consequence of the following result.

\begin{te}\label{main} Let $R$ be an order in a number field $K$
and  let $T=\{f_1,\ldots,f_s\}\subset R[x_1,\ldots,x_n]$ be a finite set
of polynomials. Define
\[ S=S(T)=\{ x=(x_1,\ldots,x_n)\in R^n: \ \text{the ideal generated by}\  \]
\[f_1(x),\ldots,f_s(x) \ \text{is}\  R\}.\]
For each maximal ideal $\f p$ of R let $t_{\f p}$ be the number of
solutions to $f_1=\ldots=f_s=0$ over the finite field $\mathbb F_{\f
p}=R/\f p$. For a positive integer $N$
let $S_N=S_N(T)=\{x\in S:x_i\in R(N),~i=1,2,\ldots,n\}$. Then
\begin{equation}\label{denformula}
 \lim_{N\to\infty} \frac{|S_N|}{(2N+1)^{dn}}=\prod_{\f p \in \mspec R} \left(1-\frac{t_{\f p}}{\N(\f p)^n}\right).
\end{equation}
\end{te}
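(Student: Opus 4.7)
The natural approach is a truncated inclusion-exclusion over the maximal ideals of $R$, combined with a tail estimate. Set $B_{\f p} := \{x \in R^n : f_i(x) \in \f p \text{ for all } i\}$, so that $S = R^n \setminus \bigcup_{\f p \in \mspec R} B_{\f p}$, and observe that $B_{\f p}$ is the preimage under reduction mod $\f p$ of the $t_{\f p}$-element common zero set of $\bar f_1, \ldots, \bar f_s$ in $\mathbb F_{\f p}^n$; equivalently, $B_{\f p}$ is a disjoint union of $t_{\f p}$ cosets of the sublattice $\f p^n \subset R^n$ of index $\N(\f p)^n$. An elementary Minkowski-style lattice point count for the box $R(N)^n$ in the fixed $\mathbb Z$-basis $w_1, \ldots, w_d$ gives
\[
\frac{|B_{\f p} \cap R(N)^n|}{(2N+1)^{dn}} = \frac{t_{\f p}}{\N(\f p)^n} + O_{\f p}(N^{-1}),
\]
and by the Chinese Remainder Theorem the same style of count yields, for any finite $F \subset \mspec R$,
\[
\lim_{N \to \infty} \frac{|\bigcap_{\f p \in F} B_{\f p} \cap R(N)^n|}{(2N+1)^{dn}} = \prod_{\f p \in F} \frac{t_{\f p}}{\N(\f p)^n}.
\]

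Next I would apply finite inclusion-exclusion over the (finitely many) primes of norm $\leq M$. Putting $S^{(M)} := R^n \setminus \bigcup_{\N(\f p) \leq M} B_{\f p}$, this gives
\[
\lim_{N \to \infty} \frac{|S^{(M)} \cap R(N)^n|}{(2N+1)^{dn}} = \prod_{\N(\f p) \leq M} \left(1 - \frac{t_{\f p}}{\N(\f p)^n}\right) =: P_M.
\]
Since $S \subseteq S^{(M)}$ and $P_M \downarrow P_\infty := \prod_{\f p}(1 - t_{\f p}/\N(\f p)^n)$ as $M \to \infty$, this immediately yields the upper bound $\limsup_N |S_N|/(2N+1)^{dn} \leq P_\infty$, proving the theorem when $P_\infty = 0$.

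For the matching lower bound when $P_\infty > 0$, the essential task is to bound the tail
\[
T(M, N) := \frac{1}{(2N+1)^{dn}} \cdot \#\left\{x \in R(N)^n : x \in B_{\f p} \text{ for some } \f p \text{ with } \N(\f p) > M \right\}
\]
and show $\limsup_N T(M, N) \to 0$ as $M \to \infty$. This is the main obstacle. A naive Schwartz--Zippel count using only one $f_i$ gives $|B_{\f p} \cap R(N)^n| \ll (2N+1)^{dn}/\N(\f p)$ for all but finitely many $\f p$, but $\sum_{\N(\f p) > M} 1/\N(\f p)$ diverges, so one must exploit the full system. The hypothesis $P_\infty > 0$ forces $\sum_{\f p} t_{\f p}/\N(\f p)^n < \infty$, and the plan is to split the sum at $\N(\f p) \asymp N^\gamma$ for a suitable $\gamma \in (0, d)$.

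In the moderate range $M < \N(\f p) \leq N^\gamma$, a uniform version of the lattice-point estimate bounds $|B_{\f p} \cap R(N)^n| \leq t_{\f p}(2N+1)^{dn}/\N(\f p)^n + (\text{boundary error})$, and summing yields a contribution bounded by $\bigl(\sum_{\N(\f p) > M} t_{\f p}/\N(\f p)^n\bigr)(2N+1)^{dn} + o((2N+1)^{dn})$, which tends to $0$ as $M \to \infty$ by the convergence above. In the large range $\N(\f p) > N^\gamma$, the condition $f_i(x) \in \f p$ for all $i$ forces either $f_i(x) = 0$ for some $i$ (contributing only $O(N^{dn-1})$ points, since any nonzero $f_i \in R[x_1,\ldots,x_n]$ has a zero locus of codimension at least one in $R^n$), or else the nonzero integral ideal $J(x) := (f_1(x), \ldots, f_s(x))$ has a prime divisor of norm $> N^\gamma$, a situation controlled by the bound $|N_{K/\mathbb Q}(f_i(x))| = O(N^{d \cdot \deg f_i})$ combined with a standard count of integers of a given size having a large prime divisor. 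Combining these two regimes with the finite inclusion-exclusion gives $\liminf_N |S_N|/(2N+1)^{dn} \geq P_M - \varepsilon(M)$ with $\varepsilon(M) \to 0$, completing the proof. The technical heart of the argument lies in the uniform control of the boundary error in the moderate range, since it must be good enough to withstand summation over $\f p$ whose associated fundamental domains may be long and thin.
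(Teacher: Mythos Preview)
Your overall strategy---finite inclusion--exclusion over primes of norm $\leq M$, then a tail bound split into moderate and large ranges---is exactly the architecture the paper uses. The upper bound $\limsup \leq P_\infty$ and the moderate-range contribution (using $\sum_{\f p} t_{\f p}/\N(\f p)^n < \infty$ when $P_\infty>0$, together with the coset-counting bound of Lemma~\ref{residues}) are essentially correct.

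The genuine gap is in your large-range estimate. Your claim that the case ``$f_i(x)\neq 0$ for all $i$ and $\f p \mid (f_i(x))$ for some $\f p$ with $\N(\f p)>N^\gamma$'' is controlled by the norm bound $|N_{K/\mathbb Q}(f_i(x))| = O(N^{d\deg f_i})$ together with ``a standard count of integers of a given size having a large prime divisor'' does not work. Applied to a single polynomial $f_i$, this says nothing: for almost every $x\in R(N)^n$ the value $f_i(x)$ \emph{does} have a prime factor of norm $>N^\gamma$, because almost every integer of size $\ll N^D$ has a prime factor exceeding $N^\gamma$. So a one-polynomial-at-a-time argument gives no saving at all here. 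The point is that one must exploit that \emph{two} coprime polynomials are simultaneously in $\f p$; this is not a standard fact but the real content of the theorem.

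The paper handles precisely this via Proposition~\ref{mainpro} and Corollary~\ref{Main}: for coprime $f,g$ one takes the resultant $r=\mathrm{Res}_{x_n}(f,g)\in R[x_1,\ldots,x_{n-1}]$, so that $\f p\mid f(x),g(x)$ forces $\f p\mid r(x_1,\ldots,x_{n-1})$, reducing to one fewer variable. Lemma~\ref{big} then shows that for fixed $(x_1,\ldots,x_{n-1})$ with $r\neq 0$ there are only $O(1)$ primes of norm $>N$ dividing $r(x_1,\ldots,x_{n-1})$, and for each such prime only $O((2N+1)^{d-1})$ choices of $x_n$ with $g(x)\in\f p$; this yields the uniform bound $|W_M\cap R(N)^n|\leq c(2N+1)^{dn}/M$. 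An induction on $n$ and a factorization into irreducibles complete the argument. You have also misidentified the technical heart: the moderate-range boundary error is routine (a constant factor $2^{dn}$ suffices), whereas the large-range bound is where the substantial work lies.
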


A proof of Theorem~\ref{main} is given in the next section. Note that for $s=2$, $R=\mathbb Z$,
and polynomials $f_1,f_2$ which do not have a non-constant common factor this result was
proved by Poonen \cite{poonen} in a slightly more general form (in Poonen's paper
limit is taken over boxes whose all sides increase to infinity; here we only deal with boxes which are cubes).
Poonen's result was inspired by an earlier paper by T. Ekedahl \cite{chinese}, where a similar result
has been established. In a recent paper \cite{arnold} Arnold
considers the set of pairs of relatively prime integers as a subset of $\mathbb Z^2$ and proves that its density
can be computed by using sets of the form $nG$, where $G$ is any polygon (so our case corresponds
to $G$ being the square $|x|\leq 1$, $|y|\leq 1$). He calls subsets of $\mathbb Z^2$ (or, more generally, of
$\mathbb Z^n$) which have this property {\em uniformly distributed}. In a subsequent paper we will discuss
uniform distribution of sets of the type $S(T)$.

We end this section with an application of our theorems.

\subsection{The probability that $k$ random elements generate the group $\mathbb Z^n$}\label{kap}
In his work on generic properties of one-relator groups Ilya Kapovich was led to the following
question: what is the probability that several randomly chosen elements generate the group
$\mathbb{Z}^n$. Even though there is a fairly simple heuristic argument which
leads to an answer, neither Kapovich nor we have been able to find a reference containing a
proof.
The techniques developed in this paper allow in particular to give a rigorous answer to Kapovich's
question. The key observation is contained in the following lemma.

\begin{lemma}\label{qcomb}
Let $V$ ba an $n$-dimensional vector space over the finite field $\Fg_q$. The number
$\alpha_{m,n}=\alpha_{m,n}(q)$ of
$m$-tuples of elements in $V$ that span $V$ is equal to $\prod_{i=0}^{n-1}(q^m-q^i)$.
\end{lemma}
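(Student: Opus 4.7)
The plan is to reduce the count of spanning $m$-tuples to the count of matrices of maximal rank. After fixing a basis of $V$ to identify $V$ with $\mathbb{F}_q^n$, an $m$-tuple $(v_1,\ldots,v_m)\in V^m$ can be encoded as an $n\times m$ matrix $A$ over $\mathbb{F}_q$ whose $i$-th column is $v_i$. The tuple spans $V$ if and only if the columns of $A$ span $\mathbb{F}_q^n$, equivalently $\operatorname{rank}(A)=n$.

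Using the equality of row rank and column rank, $A$ has rank $n$ if and only if its $n$ rows are linearly independent vectors in $\mathbb{F}_q^m$. Hence $\alpha_{m,n}$ equals the number of ordered $n$-tuples of linearly independent vectors in $\mathbb{F}_q^m$; in particular this forces $n\le m$ for the quantity to be nonzero, as expected.

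The last step is to count such ordered independent $n$-tuples by choosing the rows one at a time. The first row may be any nonzero vector in $\mathbb{F}_q^m$, giving $q^m-1$ choices. Having selected $i$ linearly independent rows, they span an $i$-dimensional subspace of size $q^i$, and the $(i+1)$-st row must be chosen outside it, yielding $q^m-q^i$ choices. Multiplying these counts for $i=0,1,\ldots,n-1$ produces
\[
\alpha_{m,n}=\prod_{i=0}^{n-1}(q^m-q^i),
\]
as claimed. (Note that specializing $m=n$ recovers the familiar formula $|\mathrm{GL}_n(\mathbb{F}_q)|=\prod_{i=0}^{n-1}(q^n-q^i)$.)

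There is essentially no obstacle here: the only nontrivial ingredient is the elementary equality of row and column rank, which converts the spanning condition on columns into a linear independence condition on rows. The remainder of the argument is a standard inductive count of ordered bases for a subspace, performed row by row.
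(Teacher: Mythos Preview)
Your proof is correct and takes a genuinely different route from the paper's argument. The paper verifies the formula by induction on $m+n$: it handles the base cases $m<n$ and $m=n$ directly, then establishes the recursion
\[
\alpha_{m,n}=\alpha_{m-1,n}+q^{m-1}(q^n-1)\alpha_{m-1,n-1}
\]
by conditioning on whether the first vector $v_1$ is zero and, when $v_1\neq 0$, counting $(m-1)$-tuples that span $V/\langle v_1\rangle$ together with their lifts. You instead avoid recursion entirely by passing to the $n\times m$ matrix of columns and invoking equality of row rank and column rank to convert ``columns span $\mathbb{F}_q^n$'' into ``rows are linearly independent in $\mathbb{F}_q^m$,'' after which the standard sequential count of independent vectors yields the product directly. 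Your argument is shorter and more self-contained; the paper's recursion, on the other hand, makes the dependence on $m$ and $n$ more explicit and could be useful if one wanted to derive related identities. Both are perfectly valid.
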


\begin{proof}
For $m<n$ the formula is obviously true as it yields $0$ and there are no $m$-tuples
which span $V$.
The number $\alpha_{n,n}$
equals the number of bases of $V$, which is well known to be equal to
$|\text{\rm GL}_n(\mathbb F_{q})|=\prod_{i=0}^{n-1}(q^n-q^i)$. This establishes the result
for $m=n$.
Note now that $v_1,\ldots,v_m$ span $V$ if and only if the images of
$v_2,\ldots,v_m$ in $V/<v_1>$ span $V/<v_1>$.
Given $v\in V$, we count the number of $m$-tuples which span $V$ and start with
$v_1=v$.
If $v=0$ this number is clearly $\alpha_{m-1,n}$. If $v\neq 0$, then there are
$\alpha_{m-1,n-1}$
$(m-1)$-tuples which span $V/<v>$ and each such tuple lifts to $q^{m-1}$
$(m-1)$-tuples from $V$.
Thus we get $q^{m-1}\alpha_{m-1,n-1}$ $m$-tuples which span $V$ and start at $v$.
Since there are $q^n-1$
non-zero elements in $V$, we get the following recursive formula:
\[\alpha_{m,n}=\alpha_{m-1,n}+q^{m-1}(q^n-1)\alpha_{m-1,n-1}.\]
The recursive formula and a straightforward induction on $m+n$ finish the proof.
\end{proof}

\begin{theorem}\label{ilia}
Let $R$ be an order in a number field. Define
\[\displaystyle \zeta_R(s)=\prod_{ \f p \in
\mspec(R)}\left(1-\left|R/\f p\right|^{-s}  \right)^{-1}.\]
For any $k\geq n$ the density of the set of $k$ tuples
which generate the $R$-module $R^n$ is equal to
$\displaystyle \prod_{m=k-n+1}^{k}
\zeta_R(m)^{-1}$.
\end{theorem}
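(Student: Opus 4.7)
The plan is to reduce the statement to Theorem~\ref{main} applied to a carefully chosen polynomial system, and then evaluate the resulting infinite product using Lemma~\ref{qcomb}. Parameterize a $k$-tuple $(v_1,\ldots,v_k) \in (R^n)^k$ by the entries $x_{i,j}$, $1\le i\le k$, $1\le j\le n$, of the $n\times k$ matrix $X$ whose columns are the $v_i$. Let $f_1,\ldots,f_s$ be the $\binom{k}{n}$ maximal ($n\times n$) minors of $X$; these are polynomials in $R[x_{1,1},\ldots,x_{k,n}]$. A tuple $(v_1,\ldots,v_k)$ generates $R^n$ as an $R$-module if and only if for every maximal ideal $\f p$ of $R$ the reductions span $(R/\f p)^n$ (by the Nakayama-type argument used in Lemma~\ref{localgen}), which over the field $\mathbb F_{\f p}$ is equivalent to $X$ having rank $n$ mod $\f p$, i.e.\ at least one of $f_1,\ldots,f_s$ not lying in $\f p$. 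Hence the set of generating tuples is exactly the set $S(T)$ of Theorem~\ref{main} for $T=\{f_1,\ldots,f_s\}$.

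Next I would compute, for each maximal $\f p$, the number $t_{\f p}$ of common zeros of $f_1,\ldots,f_s$ in $\mathbb F_{\f p}^{nk}$. By the reformulation above, $t_{\f p}$ equals the number of $k$-tuples of vectors in $\mathbb F_{\f p}^n$ which do \emph{not} span, i.e.
\[
t_{\f p} = \N(\f p)^{nk} - \alpha_{k,n}(\N(\f p)),
\]
where $\alpha_{k,n}(q)=\prod_{i=0}^{n-1}(q^k-q^i)$ by Lemma~\ref{qcomb}. Substituting into the formula of Theorem~\ref{main},
\[
\mathrm{density} = \prod_{\f p\in\mspec R}\left(1-\frac{t_{\f p}}{\N(\f p)^{nk}}\right)
= \prod_{\f p\in\mspec R}\frac{\alpha_{k,n}(\N(\f p))}{\N(\f p)^{nk}}
= \prod_{\f p\in\mspec R}\prod_{i=0}^{n-1}\bigl(1-\N(\f p)^{i-k}\bigr).
\]

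Finally, I would swap the order of the two products (legitimate since each factor lies in $(0,1)$ and the inner product is finite) and reindex with $m=k-i$, obtaining
\[
\prod_{i=0}^{n-1}\prod_{\f p\in\mspec R}\bigl(1-\N(\f p)^{-(k-i)}\bigr)
= \prod_{m=k-n+1}^{k}\zeta_R(m)^{-1},
\]
which is the desired formula. The only delicate point is the reduction of "generating $R^n$" to "minors generate the unit ideal," which rests on the local criterion; the subsequent counting and manipulation are routine, and convergence of the Euler product for $k\ge n$ (the factor corresponding to $m=k-n+1\ge 1$ is handled since $k\ge n$ and, more carefully, $\zeta_R(m)$ converges for $m\ge 2$, while for $m=1$ one must check convergence directly from the finite product presentation, which in our situation is fine because the full product rearranges as an absolutely convergent double product of the factors $\alpha_{k,n}(\N(\f p))/\N(\f p)^{nk}$, all bounded by $1$, guaranteed to have a positive limit by Theorem~\ref{main}).
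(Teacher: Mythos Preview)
Your argument is correct. You bypass the paper's main proof, which takes a different route: the paper equips $R^n$ with the zero multiplication, forms the unital algebra $A=(R^n)^{(1)}$ by adjunction of unity, uses Lemma~\ref{add1} to identify module-generating $k$-tuples of $R^n$ with algebra-generating $k$-tuples of $A$, and then invokes Theorem~\ref{gendensity} (together with Lemma~\ref{add1mod} to compute $g_k(\f p,A)=\N(\f p)^k\alpha_{k,n}(\N(\f p))$). Your approach instead applies Theorem~\ref{main} directly, taking for $T$ the set of $n\times n$ minors of the coordinate matrix; the local-to-global step (``generates $R^n$'' $\Leftrightarrow$ ``the minors generate the unit ideal'') is the standard Nakayama argument, and the identification of $t_{\f p}$ with the non-spanning count then reduces to Lemma~\ref{qcomb} just as in the paper. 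The paper itself notes, immediately after the corollary for $R=\mathbb Z$, that one can derive the result directly from Theorem~\ref{main}, which is exactly what you do.

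The trade-off: the paper's route exhibits the result as an instance of its algebra-generating machinery (and so illustrates the utility of Lemmas~\ref{add1} and~\ref{add1mod}), while your route is shorter and avoids the auxiliary algebra $A^{(1)}$ entirely. One cosmetic point: your closing paragraph on convergence is more hedged than necessary. Theorem~\ref{main} already asserts that the density equals the product $\prod_{\f p}\alpha_{k,n}(\N(\f p))/\N(\f p)^{nk}$, and rearranging a product of factors in $[0,1]$ is always legitimate (the product is either positive or zero, and in either case the rearranged product has the same value). In particular the boundary case $k=n$, where the factor $\prod_{\f p}(1-\N(\f p)^{-1})=\zeta_R(1)^{-1}$ vanishes, falls out without special pleading.
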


\begin{proof}
Consider $R^n$ as an $R$-algebra with trivial multiplication and let $A$ be obtained from $R^n$
by the construction of adjunction of unity (in the category of $R$-algebras). By Lemma~\ref{add1} we see that
the density of the set of $k$-tuples which generate the $R$-module $R^n$ is the same as the density $\den_k(A)$
of the set of $k$-tuples which generate the $R$-algebra $A$. By Lemmas~\ref{add1}
and \ref{add1mod}, we have $g_k(\f p, A)=\N(\f p)^k\alpha_{k,n}(\N(\f p))$. By Theorem~\ref{gendensity}
and Lemma~\ref{qcomb} we obtain the formula
\[ \den_k(A)=
\prod_{\f p\in \mspec R}\frac{
\prod_{i=0}^{n-1}(\N(\f p)^k-\N(\f p)^i)}{\N(\f p)^{nk}}= \prod_{m=k-n+1}^{k}
\zeta_R(m)^{-1}.\]
\end{proof}
The answer to Ilya Kapovich's question is therefore given by the following corollary.

\begin{corollary}
The probability that $k$ randomly chosen elements generate the group
$\mathbb{Z}^n$ is equal to $
\prod_{m=k-n+1}^{k} \zeta(m)^{-1}$, where $\zeta$ is the
Riemann zeta-function.
\end{corollary}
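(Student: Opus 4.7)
The corollary is an immediate specialization of Theorem~\ref{ilia} to the case $R=\mathbb{Z}$, so the main task in the plan is to unpack what the general theorem says in this concrete setting and to justify the translation between ``density'' and ``probability''.

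First I would take $R=\mathbb{Z}$ in Theorem~\ref{ilia}. The maximal ideals of $\mathbb{Z}$ are precisely the principal ideals $(p)$ for $p$ a rational prime, and the residue field $\mathbb{Z}/(p)$ has cardinality $p$. Therefore the abstract Dedekind-style zeta function defined in Theorem~\ref{ilia} becomes
\[
\zeta_{\mathbb{Z}}(s)=\prod_{p}\left(1-p^{-s}\right)^{-1},
\]
and by Euler's product formula this coincides with the Riemann zeta function $\zeta(s)$ (for $\operatorname{Re}(s)>1$, which is what is needed since the relevant exponents are $m\geq k-n+1\geq 2$ when $k>n$).

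Next I would apply Theorem~\ref{ilia} to obtain that the density of the set of $k$-tuples $(v_1,\ldots,v_k)\in(\mathbb{Z}^n)^k$ that generate $\mathbb{Z}^n$ as a $\mathbb{Z}$-module equals
\[
\prod_{m=k-n+1}^{k}\zeta_{\mathbb{Z}}(m)^{-1}=\prod_{m=k-n+1}^{k}\zeta(m)^{-1}.
\]
Finally, I would note that with the integral basis of $R=\mathbb{Z}$ given by $1$ and the standard basis of $\mathbb{Z}^n$ over $\mathbb{Z}$, the set $(\mathbb{Z}^n)^k(N)$ is precisely the cube $\{-N,\ldots,N\}^{nk}$, so the density defined in Section~\ref{s3} agrees with the natural notion of ``probability that a uniformly chosen $k$-tuple in this cube generates $\mathbb{Z}^n$, as $N\to\infty$''. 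This is exactly the sense in which Kapovich's question is meant, so the density formula is the probability claimed.

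There is essentially no obstacle here: all the hard work is done in Theorem~\ref{ilia}, which in turn rests on Theorem~\ref{gendensity}, Theorem~\ref{main}, Lemma~\ref{qcomb}, and the adjunction-of-unity trick of Lemmas~\ref{add1} and \ref{add1mod}. The only point that warrants a remark is the edge case $k=n$: here the product contains the factor $\zeta(1)^{-1}$, which is $0$, consistent with the statement in the introduction that this product should be interpreted as $0$ when $m=n$, and reflecting the obvious fact that a random $n$-tuple in $\mathbb{Z}^n$ almost never forms a basis (the density of unimodular $n\times n$ integer matrices inside all integer matrices is zero).
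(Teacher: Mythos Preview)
Your proposal is correct and follows precisely the paper's approach: the corollary is recorded as an immediate specialization of Theorem~\ref{ilia} to $R=\mathbb{Z}$, where $\zeta_R$ becomes the Riemann zeta function via the Euler product. The paper does not even write out a proof, so your unpacking of the identification $\zeta_{\mathbb{Z}}=\zeta$, the interpretation of density as probability, and the remark on the $k=n$ edge case are all entirely appropriate elaborations.
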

Note that this corollary can also be derived directly from Theorem~\ref{main}.

\section{Proof of Theorem \ref{main}}
Let us start by recalling some of the notation set in the previous section.
$R$ is an order in a number field $K$. The degree of $K$ over $\mathbb Q$
is $d$ and $O_K$ is the ring of integers of $K$ (i.e. the integral closure of $R$ in $K$).
We fix an integral basis $w_1,\ldots,w_d$ of
$R$ over $\mathbb Z$. Any element $r$ of $R$ can be uniquely
written as $r=\sum_{i=1}^d r_i w_i$ with $r_i\in \mathbb Z$. For a
positive integer $N$ we denote by $R(N)$ the set of all $r\in R$
such that $|r_i|\leq N$ for all $i$. Clearly $|R(N)|=(2N+1)^d$.
The norm map from $K$ to $\mathbb Q$ is denoted by $\N_{K/\mathbb Q}$.
For an ideal $I$ of $R$ we set $\N_{K/\mathbb Q}(I)$ for the non-negative
integer which is the greatest common divisor of the norms of all elements in $I$.
We write $\N(I)$ for the cardinality of $R/I$.
If $\f p$ is a maximal ideal of $R$ then we write $\mathbb F_{\f p}$ for the field $R/\f p$.

The following lemma is well known but for the readers convenience
we include a short proof.
\begin{nle}\label{zeros}
Let $F$ be a finite field with $q$ elements and let
$f(x_1,\ldots,x_n)$ be a non-zero polynomial in $F[x_1,\ldots,x_n]$. Then the number of solutions of the
equation
$f(x_1,\ldots,x_n)=0$ in $F^n$ does not exceed $(\deg f) q^{n-1}$.
\end{nle}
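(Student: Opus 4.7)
My approach is a straightforward induction on the number of variables $n$. The base case $n=1$ is the familiar fact that a nonzero univariate polynomial of degree $D$ over any field has at most $D$ roots, so the bound $(\deg f)q^{0} = \deg f$ holds immediately.

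For the inductive step, I would treat two cases separately. First, the easy case $\deg f \ge q$: then $(\deg f)q^{n-1} \ge q^n = |F^n|$, so the bound is automatic. From now on, assume $\deg f < q$. Writing
\[
f(x_1,\ldots,x_n) = \sum_{i=0}^{d} g_i(x_2,\ldots,x_n)\, x_1^{i},
\]
where $d$ is the degree of $f$ in $x_1$ and $g_d \neq 0$, I note that $d \le \deg f$ and $\deg g_d \le \deg f - d$. Partition $F^{n-1}$ into the set $A$ on which $g_d$ does not vanish and its complement $B$. For $(a_2,\ldots,a_n) \in A$, the polynomial $f(x_1, a_2, \ldots, a_n)$ is a nonzero univariate polynomial of degree $d$, contributing at most $d$ roots; for $(a_2,\ldots,a_n) \in B$, the worst case is all of $F$, giving at most $q$ roots. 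Applying the inductive hypothesis to $g_d$ yields $|B| \le (\deg f - d)\, q^{n-2}$.

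Summing,
\[
\#\{f=0\} \;\le\; d\,(q^{n-1} - |B|) + q\,|B| \;=\; d\,q^{n-1} + (q-d)\,|B|.
\]
Since we are in the case $d \le \deg f < q$, the factor $q - d$ is nonnegative, so plugging in the inductive bound on $|B|$ and simplifying gives
\[
\#\{f=0\} \;\le\; q^{n-2}\bigl[\,d\,q + (q-d)(\deg f - d)\,\bigr] \;=\; q^{n-2}\bigl[\,q\deg f - d(\deg f - d)\,\bigr] \;\le\; (\deg f)\,q^{n-1},
\]
where the last inequality uses $d \le \deg f$ so that the subtracted term is nonnegative. This completes the induction.

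The only mildly delicate point is making sure the algebra in the inductive step cleanly collapses to $(\deg f)q^{n-1}$; the case split on whether $\deg f \ge q$ is what makes the factor $q - d$ nonnegative and the calculation work. Everything else is routine.
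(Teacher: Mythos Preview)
Your proof is correct and follows essentially the same induction on $n$ as the paper, with the same decomposition of $f$ by its degree in one variable and the same use of the inductive hypothesis on the leading coefficient $g_d$. The only difference is bookkeeping: the paper bounds the contribution from the set $A$ (where $g_d\neq 0$) crudely by $d\cdot q^{n-1}$ rather than $d\cdot|A|$, which yields directly
\[
\#\{f=0\}\le (\deg g_d)\,q^{n-1}+d\,q^{n-1}\le(\deg f)\,q^{n-1}
\]
and thereby avoids your case split on whether $\deg f\ge q$. Your sharper tracking of $|A|=q^{n-1}-|B|$ buys nothing in the end (the final bound is the same) and costs you the extra case analysis to ensure $q-d\ge 0$; the paper's cruder estimate is cleaner here.
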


\begin{proof}
We proceed by induction on $n$. For $n=1$ this is just the
statement that a polynomial $f$ in one variable over a field has
at most $\deg f$ roots. Suppose now that the result holds for polynomials in less than $n$ variables
and let $f(x_1,\ldots,x_n)=
\sum_{i=0}^{d}f_i(x_1,\ldots,x_{n-1})x_n^i$ be a polynomial in $n$ variables with $f_d\neq 0$. By the inductive assumption,
the number of solutions to $f_d=0$ in $F^n$ does not exceed $(\deg f_d)\cdot
q^{n-2}\cdot q=(\deg f_d) q^{n-1}$. For each $(a_1,\ldots,a_{n-1})\in
F^{n-1}$ such that $f_d(a_1,\ldots,a_{n-1})\neq 0$ there are at most $d$ solutions to
$f(a_1,\ldots,a_{n-1},x_n)=0$. Thus we have at most $(\deg f_d)
q^{n-1}+dq^{n-1}\leq (\deg f) q^{n-1}$ solutions to $f=0$.
\end{proof}

\begin{pro}\label{hyper}
Let $f(x_1,\ldots,x_n)\in \mathbb Z[x_1,\ldots,x_n]$ be a non-zero polynomial. Define
\[Z(f,N)=\{(x_1,\ldots,x_n)\in\mathbb Z^n: |x_i|\leq N \ \text{and}\ f(x_1,\ldots,x_n)=0\}. \]
Then $|Z(f,N)|\leq (\deg f)(2N+1)^{n-1}$.
\end{pro}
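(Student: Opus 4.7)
The plan is to proceed by induction on $n$, mimicking the argument of Lemma~\ref{zeros} but with the role of $|F| = q$ replaced by the box size $2N+1$. The base case $n=1$ is the elementary fact that a nonzero polynomial in one variable over an integral domain has at most $\deg f$ roots, so $|Z(f,N)| \leq \deg f = (\deg f)(2N+1)^0$.

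For the inductive step, I would view $f$ as a polynomial in $x_n$ with coefficients in $\mathbb{Z}[x_1,\ldots,x_{n-1}]$, writing
\[ f(x_1,\ldots,x_n) = \sum_{i=0}^{d} f_i(x_1,\ldots,x_{n-1}) x_n^i, \]
where $d$ is the degree of $f$ in $x_n$ and $f_d \neq 0$. Note that $\deg f_d \leq (\deg f) - d$. I would then split the count of $Z(f,N)$ according to whether the leading coefficient $f_d$ vanishes at $(x_1,\ldots,x_{n-1})$. When $f_d(x_1,\ldots,x_{n-1}) \neq 0$, the univariate specialization $f(x_1,\ldots,x_{n-1},x_n)$ is a nonzero polynomial in $x_n$ of degree $d$ and hence has at most $d$ integer roots; the number of tuples $(x_1,\ldots,x_{n-1})$ with $|x_i|\leq N$ is at most $(2N+1)^{n-1}$, contributing at most $d(2N+1)^{n-1}$ zeros. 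When $f_d(x_1,\ldots,x_{n-1}) = 0$, the inductive hypothesis gives at most $(\deg f_d)(2N+1)^{n-2}$ choices of $(x_1,\ldots,x_{n-1})$, and for each one there are at most $2N+1$ values of $x_n$ in the box, contributing at most $(\deg f_d)(2N+1)^{n-1}$ zeros.

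Adding the two contributions and using $d + \deg f_d \leq \deg f$ yields
\[ |Z(f,N)| \leq \bigl(d + \deg f_d\bigr)(2N+1)^{n-1} \leq (\deg f)(2N+1)^{n-1}, \]
completing the induction. There is no real obstacle here; the only subtlety is making sure to use $\deg f \geq d + \deg f_d$ (which holds because the monomials of $f_d \cdot x_n^d$ all appear in $f$ with nonzero coefficient) rather than the weaker $\deg f \geq d$. The argument is a direct transcription of the finite-field Schwartz--Zippel bound of Lemma~\ref{zeros}, with the box $\{-N,\ldots,N\}$ replacing the ground field.
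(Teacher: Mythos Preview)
Your proof is correct and follows essentially the same approach as the paper's own proof: induction on $n$, writing $f$ as a polynomial in $x_n$ with leading coefficient $f_d$, splitting according to whether $f_d$ vanishes, and combining the two contributions via $d + \deg f_d \leq \deg f$. The argument and the bookkeeping match the paper exactly.
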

\begin{proof}
We proceed by induction on $n$. For $n=1$ the result
is straightforward. Now assume the results for polynomials in
less than $n$ variables and consider a polynomial $f(x_1,\ldots,x_n)=
\sum_{i=0}^{d}f_i(x_1,\ldots,x_{n-1})x_n^i$ in $n$ variables with $f_d\neq 0$. By
the inductive assumption, the
number of elements in $Z(f,N)$ for which
$f_d=0$ does not exceed $(\deg f_d)\cdot
(2N+1)^{n-2}\cdot(2N+1)$. For each $(a_1,\ldots,a_{n-1})$ such that
$f_d(a_1,\ldots,a_{n-1})\neq 0$ there are at most $d$ solutions to
$f(a_1,\ldots,a_{n-1},x_n)=0$. Thus we have at most $(\deg f_d)
(2N+1)^{n-1}+d(2N+1)^{n-1}\leq (\deg f) (2N+1)^{n-1}$ elements in $Z(f,N)$.
\end{proof}

\begin{cl}\label{bounded}
Let $f\in R[x_1,\ldots,x_n]$ be a polynomial of positive degree $\deg f>0$. For a non-zero ideal $J$ of $R$
define
\[ I(f,J,N)=\{(x_1,\ldots,x_n)\in R(N)^n: J\subseteq f(x_1,\ldots,x_n)R\} .\]
Then $|I(f,J,N)|\leq \delta(J)d(\deg f)(2N+1)^{dn-1}$, where $\delta(J)$ is the number of integral divisors
of the norm $\N_{K/\mathbb Q}(J)$.
\end{cl}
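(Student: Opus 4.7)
My plan is to convert the ideal-theoretic hypothesis $J\subseteq f(x)R$ into a Diophantine condition on the integer coordinates of $x$ by taking absolute norms, and then apply Proposition~\ref{hyper} to count.

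First I would observe that if $x\in I(f,J,N)$ then $f(x)\neq 0$ (else $J\subseteq(0)$ would contradict $J\neq 0$), and that for every $j\in J$ one may write $j=f(x)r$ with $r\in R$. Taking norms to $\mathbb{Z}$ yields $\N_{K/\mathbb{Q}}(j)=\N_{K/\mathbb{Q}}(f(x))\cdot\N_{K/\mathbb{Q}}(r)$, so the non-zero integer $\N_{K/\mathbb{Q}}(f(x))$ divides $\N_{K/\mathbb{Q}}(j)$. Letting $j$ range over $J$ and taking the gcd, $\N_{K/\mathbb{Q}}(f(x))$ would then be an integer divisor of $\N_{K/\mathbb{Q}}(J)$, and so would assume at most $\delta(J)$ distinct values as $x$ varies over $I(f,J,N)$.

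Next I would encode this numerical condition polynomially. Writing the generic coordinate $X_j=\sum_{i=1}^{d}X_{j,i}w_i$ and setting
\[G(X_{1,1},\ldots,X_{n,d}):=\N_{K/\mathbb{Q}}(f(X_1,\ldots,X_n))=\prod_{\sigma\colon K\hookrightarrow\overline{\mathbb{Q}}}\sigma(f)(X_1,\ldots,X_n),\]
I would verify that $G\in\mathbb{Z}[X_{j,i}]$: its coefficients are Galois-invariant algebraic integers, since $R\subseteq O_K$. Because each factor $\sigma(f)$ has degree $\deg f$ in $X_1,\ldots,X_n$, and the substitution $X_j\mapsto\sum_{i}X_{j,i}w_i$ is $\mathbb{Q}$-linear and hence degree-preserving, $G$ would have total degree at most $d\cdot\deg f$ in the $dn$ variables $X_{j,i}$; in particular $G$ is non-constant, so $G-m$ is a non-zero polynomial for every integer $m$.

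To finish I would note that $I(f,J,N)$ is contained in the union, over the $\delta(J)$ integer divisors $m$ of $\N_{K/\mathbb{Q}}(J)$, of the sets $V_m:=\{(x_{j,i})\in\mathbb{Z}^{dn}:|x_{j,i}|\leq N\text{ and }G(x_{j,i})=m\}$. Applying Proposition~\ref{hyper} to the non-zero polynomial $G-m$ of degree at most $d(\deg f)$ bounds $|V_m|\leq d(\deg f)(2N+1)^{dn-1}$, and summing over the $\delta(J)$ values of $m$ gives the desired estimate. The only substantive step beyond Proposition~\ref{hyper} is verifying the integrality of $G$ and its degree bound under the linear change of variables $X_j\mapsto\sum_iX_{j,i}w_i$; everything else is immediate from the multiplicativity of the norm.
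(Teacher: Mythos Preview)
Your proof is correct and follows essentially the same route as the paper's: pass to the integer polynomial $G=\N_{K/\mathbb Q}(f)$ in the $dn$ integral coordinates, observe that membership in $I(f,J,N)$ forces $G$ to take one of $\delta(J)$ divisor values, and apply Proposition~\ref{hyper} to each $G-m$. The paper states this in two sentences; you have simply filled in the justifications (Galois-invariance for integrality of $G$, the degree bound under the linear substitution, and why $G-m$ is non-zero) that the paper leaves implicit.
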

\begin{proof}
Write $x_i=\sum_{j=1}^{d} y_{i,j}w_j$. Note that if $J\subseteq f(x_1,\ldots,x_n)R$ then
$\N_{K/\mathbb Q}(f(x_1,\ldots,x_n))$ divides $\N_{K/\mathbb Q}(J)$. There is
a polynomial $g(y_{i,j})\in \mathbb Z[y_{1,1}, y_{1,2},\ldots, y_{n,d}]$ of degree $(\deg f)d$ in $dn$ variables such that
\[ \N_{K/\mathbb Q}(f(x_1,\ldots,x_n))=g(y_{i,j}).\]
The result follows now from Proposition~\ref{hyper} applied to each of the polynomials $g-k$, where
$k$ varies over all divisor of $\N_{K/\mathbb Q}(J)$.
\end{proof}

\begin{te}\label{split}
Let $f\in R[x_1,\ldots,x_n]$ be a polynomial of positive degree. For each maximal ideal $\f p$ of $R$ let
$f_{\f p}$ be the number of solutions to $f=0$ over the finite field
$\mathbb F_{\f p}$. Then the series $\displaystyle \sum_{\f p\in \mspec R} f_{\f p}/\N(\f p)^n$ diverges.
\end{te}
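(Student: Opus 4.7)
I argue by induction on $n$. In every case the aim is to exhibit a set $\Sigma\subset\mspec R$ of positive Dirichlet density and a constant $c>0$ depending only on $f$ such that $f_{\f p}\geq c\,\N(\f p)^{n-1}$ for all $\f p\in\Sigma$. This is enough because, as a standard consequence of the Chebotarev density theorem together with the pole at $s=1$ of the Dedekind zeta function $\zeta_K$, the partial sum $\sum_{\f p\in\Sigma}\N(\f p)^{-1}$ diverges for any set $\Sigma$ of positive density, so $\sum_{\f p}f_{\f p}/\N(\f p)^{n}\geq c\sum_{\f p\in\Sigma}\N(\f p)^{-1}=\infty$.

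For the base case $n=1$, let $L$ be the splitting field of $f$ over $K$. By Chebotarev the set $\Sigma$ of maximal ideals of $R$ that split completely in $L$ has density $1/[L:K]>0$, and for every $\f p\in\Sigma$ coprime to the discriminant of $f$ the polynomial $f$ factors into $\deg f\geq 1$ distinct linear factors over $\mathbb F_{\f p}$. Hence $f_{\f p}\geq 1$ on $\Sigma$ minus a finite exceptional set, as required.

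For the inductive step, write $f=\sum_{i=0}^{d}g_i(x_1,\ldots,x_{n-1})\,x_n^i$ with $g_d\neq 0$. If $d=0$, then $f$ does not involve $x_n$; each zero of $f$ in $\mathbb F_{\f p}^{\,n-1}$ lifts to $\N(\f p)$ zeros in $\mathbb F_{\f p}^{\,n}$ by free choice of $x_n$, so the $n$-variable summand coincides with the $(n-1)$-variable one and the inductive hypothesis applies. The substantive case is $d\geq 1$, and there the goal reduces to showing that for a positive-density set of primes a positive proportion of specializations $(\bar a_1,\ldots,\bar a_{n-1})\in\mathbb F_{\f p}^{n-1}$ yield a univariate polynomial $f(\bar a_1,\ldots,\bar a_{n-1},x_n)\in\mathbb F_{\f p}[x_n]$ with a root in $\mathbb F_{\f p}$, since each such root is a point of the zero locus of $f$.

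The main obstacle is precisely this last step, which I would settle via Lang--Weil. Factor $f$ in $\bar K[x_1,\ldots,x_n]$ into geometrically irreducible polynomials, pick a factor $\tilde f$ of positive degree, and let $K'/K$ be its (finite) field of definition up to a scalar. For primes $\f p$ of $R$ that split completely in $K'$ and lie outside a finite bad-reduction locus---a set of positive density by Chebotarev---the reduction of $\tilde f$ modulo a prime of $O_{K'}$ above $\f p$ is geometrically irreducible over $\mathbb F_{\f p}$, so the Lang--Weil estimate gives at least $\tfrac12\N(\f p)^{n-1}$ points on its vanishing locus once $\N(\f p)$ is large; these are a fortiori zeros of $f$, so $f_{\f p}\geq \tfrac12\N(\f p)^{n-1}$ on this $\Sigma$. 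A purely algebraic alternative is to apply the function-field Chebotarev theorem to the Galois closure of $f$ viewed as a polynomial in $x_n$ over $K(x_1,\ldots,x_{n-1})$, together with Hilbert irreducibility, to bound below the number of specializations admitting an $\mathbb F_{\f p}$-root by $c\N(\f p)^{n-1}$.
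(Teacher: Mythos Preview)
Your core argument is correct and coincides with the paper's: pass to an absolutely irreducible factor $\tilde f$ of $f$ defined over a finite extension $K'$, use that its reduction stays absolutely irreducible at all but finitely many primes, invoke a Lang--Weil type bound (the paper cites Schmidt, \cite[V, Theorem~5A]{schmidt}) to get $\tilde f_P\geq \tfrac12\N(P)^{n-1}$ for large $\N(P)$, and then restrict to primes of degree one over $R$ (Dirichlet density~$1$) to conclude divergence.  The inductive framing and the specialization-in-$x_n$ discussion are unnecessary scaffolding, since your Lang--Weil step already handles all $n\geq 1$ uniformly without recourse to the inductive hypothesis; the paper dispenses with both and argues directly.
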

\begin{proof}
Replacing $R$ by $O_K$ changes only a finite number of terms in the sum
$\sum_{\f p} f_{\f p}/\N(\f p)^n$. It suffices then to prove the theorem under the additional
assumption that $R=O_K$.

Let $L$ be a number field containing $K$, with ring of integers $S$, and
such that $f$ has an absolutely irreducible divisor $g\in
S[x_1,\ldots,x_n]$ of positive degree (so $f/g\in L[x_1,\ldots,x_n]$). It is known that the reduction of
$g$ modulo all but a finite number of prime ideals of $S$ is absolutely irreducible (see
\cite[V.2]{schmidt}). For a maximal ideal $P$ of $S$ let $g_P$ be
the number of solutions to $g=0$ in $(S/P)^{n}$. By \cite[V, Theorem 5A]{schmidt},
$g_P\geq \frac{1}{2}|S/P|^{n-1}=\frac{1}{2}\N(P)^{n-1}$ provided  the reduction of $g$ modulo $P$ is absolutely
irreducible and $\N(P)$ is sufficiently large, which holds for all but a finite number
of maximal ideals of $S$.

Let $\Phi$ be the set of maximal ideals of $S$ which have inertia degree 1
over $R$ and let $\Psi$ be the set of all prime ideals of $R$ which lie under the ideals of $\Phi$.
Let $P\in\Phi$ be a prime ideal of $S$ over
$\f p\in \Psi$. Then $S/P=\mathbb F_{\f p}$. It follows that $f_{\f p}\geq g_{P}$ except possibly for a finite
number of $P$ for which $f/g$ is not $P$-integral.
Since each maximal ideal of $R$ lies under at most $[L:K]$ prime ideals of $S$ we get that
\[\sum_{\f p\in \mspec R} \frac{f_{\f p}}{\N(\f p)^n}\geq \sum_{\f p\in \Psi} \frac{f_{\f p}}{\N(\f p)^n}\geq
\frac{1}{[L:K]} \sum_{P\in \Phi, \N(P)>>0} \frac{g_P}{\N(P)^n}\geq \]
\[\geq \frac{1}{2[L:K]}\sum_{P\in \Phi, \N(P)>>0}\frac{1}{\N(P)}.\]
It is well known that the set $\Phi$ has Dirichlet density equal to $1$ (\cite[7.2, Corollary 3]{nar}),
in particular $\sum_{P\in \Phi}1/\N(P)$ diverges.
\end{proof}

\begin{cl} \label{zero} Under the assumptions of Theorem~\ref{main},
if the polynomials in $T$ have a common divisor of positive degree in $K[x_1,\ldots,x_n]$ then both sides of
{\rm (\ref{denformula})} are $0$. In particular, Theorem~\ref{main} is true in this case.
\end{cl}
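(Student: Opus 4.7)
The plan is to exploit the existence of a common divisor to force both sides of (\ref{denformula}) to vanish. First, starting from a common factor $h \in K[x_1,\ldots,x_n]$ of positive degree, I would clear denominators twice to produce a polynomial $g \in R[x_1,\ldots,x_n]$ of positive degree, a non-zero element $c \in R$, and polynomials $r_1,\ldots,r_s \in R[x_1,\ldots,x_n]$ such that $c f_i = g\, r_i$ holds in $R[x_1,\ldots,x_n]$ for every $i=1,\ldots,s$. (The first clearing produces $g \in R[x_1,\ldots,x_n]$ proportional to $h$; the second rescales the quotients $f_i/h \in K[x_1,\ldots,x_n]$ back into $R[x_1,\ldots,x_n]$.)

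For the left-hand side, I would observe that if $x \in R^n$ satisfies $(f_1(x),\ldots,f_s(x))R = R$, then multiplying through by $c$ gives
$cR = (g(x)r_1(x),\ldots,g(x)r_s(x))R \subseteq g(x)R$, so $S_N$ is contained in the set $I(g, cR, N)$ of Corollary~\ref{bounded}. That corollary gives $|S_N| \le \delta(cR)\, d\, (\deg g)\, (2N+1)^{dn-1}$, and dividing by $(2N+1)^{dn}$ yields a bound of order $1/N$, which tends to zero.

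For the right-hand side, I would reduce the identity $c f_i = g\, r_i$ modulo a maximal ideal $\f p$ to obtain $\bar c\,\bar f_i = \bar g\,\bar r_i$ in $\mathbb F_{\f p}[x_1,\ldots,x_n]$. For all but finitely many $\f p$ (namely those which contain neither $c$ nor some fixed non-zero positive-degree coefficient of $g$), the scalar $\bar c$ is a unit and $\bar g$ has positive degree, so $\bar g$ divides $\bar f_i$ for each $i$ and hence every zero of $\bar g$ over $\mathbb F_{\f p}$ is a common zero of all the $\bar f_i$. Writing $g_{\f p}$ for the number of zeros of $\bar g$ over $\mathbb F_{\f p}$, this gives $t_{\f p} \ge g_{\f p}$ for almost all $\f p$. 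Theorem~\ref{split} applied to $g$ shows that $\sum_{\f p} g_{\f p}/\N(\f p)^n$ diverges, hence so does $\sum_{\f p} t_{\f p}/\N(\f p)^n$; combining this with the elementary inequality $1-x \le e^{-x}$ for $x \in [0,1]$ forces the product $\prod_{\f p}\bigl(1 - t_{\f p}/\N(\f p)^n\bigr)$ to be $0$.

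The only step with real content is the coordinated extraction of $g$ and $c$: once the single relation $c f_i = g\, r_i$ is set up inside $R[x_1,\ldots,x_n]$, it simultaneously delivers the integral divisibility $cR \subseteq g(x)R$ needed to invoke Corollary~\ref{bounded} and the modular divisibility $\bar g \mid \bar f_i$ needed to invoke Theorem~\ref{split}, and after that the vanishing of both sides is automatic.
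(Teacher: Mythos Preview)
Your proposal is correct and follows essentially the same route as the paper: clear denominators to obtain a relation $c f_i = g\, r_i$ in $R[x_1,\ldots,x_n]$ (the paper writes this as $a f_i = f\cdot(af_i/f)$), then feed the left-hand side into Corollary~\ref{bounded} via the containment $S_N\subseteq I(g,cR,N)$ and the right-hand side into Theorem~\ref{split} via $t_{\f p}\ge g_{\f p}$ for almost all $\f p$. The extra exclusion of primes where $\bar g$ drops to a constant is harmless but unnecessary, since Theorem~\ref{split} already handles all primes once $g$ has positive degree over $R$.
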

\begin{proof}
Let $f\in R[x_1,\ldots,x_n]$ be a polynomial of positive degree which divides
all $f_i$ in the ring $K[x_1,\ldots,x_n]$. There is a non-zero $a$ in $R$ such that $af_i/f\in R[x_1,\ldots,x_n]$ for all $i$.
It follows that $S_N(T)\subseteq I(f,aR, N)$. By Corollary~\ref{bounded},
\[\frac{|S_N|}{(2N+1)^{dn}}\leq \frac{|I(f,aR,N)|}{(2N+1)^{dn}}\leq \frac{\delta(aR)d(\deg f)}{2N+1},\]
so the left hand side of (\ref{denformula}) is 0.

For any maximal ideal  $\f p$ of $R$ which does not divide $a$ we have $t_{\f p}\geq f_{\f p}$.
It follows from Theorem \ref{split} that $\sum_{\f p\in\mspec R} t_{\f p}/\N(\f p)^n$ diverges.  This
is equivalent to the right hand side of (\ref{denformula}) being 0.
\end{proof}

\begin{nle}\label{residues}
Let $\f p$ be a maximal ideal of $R$ with $\N(\f p)=p^s$, where $p$ is the characteristic
of $\mathbb F_{\f p}$. Then any element of $\mathbb F_{\f p}$ lifts to at most $(2N+1)^{d-s}(1+2N/p)^s$ elements
in $R(N)$.
\end{nle}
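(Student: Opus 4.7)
The plan is to translate the question into a lattice counting problem. The fiber of any $\alpha\in\mathbb F_{\f p}$ under the reduction $R\to\mathbb F_{\f p}$ is an additive coset $r_0+\f p$, so it suffices to bound $|(r_0+\f p)\cap R(N)|$ by $(2N+1)^{d-s}(1+2N/p)^s$ uniformly in $r_0\in R$. The approach is to exhibit a $\mathbb Z$-basis of $\f p$ that is triangular with respect to the fixed filtration $R^{(k)}:=\mathbb Z w_1\oplus\cdots\oplus\mathbb Z w_k$ and whose diagonal entries all lie in $\{1,p\}$.

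First I would exploit the crucial input that $\mathbb F_{\f p}$ has characteristic $p$, which gives $pR\subset\f p$. Let $\pi_k\colon R\to\mathbb Z$ denote the projection onto the $w_k$-coordinate and let $c_k>0$ be the positive generator of the ideal $\pi_k(\f p\cap R^{(k)})\subset\mathbb Z$. The short exact sequences
\[0\to \f p\cap R^{(k-1)}\to \f p\cap R^{(k)}\to c_k\mathbb Z\to 0\]
compose to give $\prod_{k=1}^d c_k=[R:\f p]=p^s$. The key structural point is that each $c_k$ divides $p$: indeed $pw_k\in pR\cap R^{(k)}\subset \f p\cap R^{(k)}$, so $p=\pi_k(pw_k)\in c_k\mathbb Z$. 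Hence each $c_k\in\{1,p\}$, and exactly $s$ of them equal $p$.

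Next I would choose, for each $k$, an element $v_k\in\f p\cap R^{(k)}$ with $\pi_k(v_k)=c_k$, so that $v_k=c_kw_k+\sum_{j<k}a_{jk}w_j$ for some $a_{jk}\in\mathbb Z$. A routine descent along the filtration shows $v_1,\ldots,v_d$ is a $\mathbb Z$-basis of $\f p$, so every element of $r_0+\f p$ is uniquely $r_0+\sum_i k_iv_i$ with $k_i\in\mathbb Z$, and its $w_j$-coordinate is $(r_0)_j+c_jk_j+\sum_{i>j}a_{ji}k_i$. Choosing $k_d,k_{d-1},\ldots,k_1$ in that order, at step $j$ the constraint that the $w_j$-coordinate lie in $[-N,N]$ becomes $|c_jk_j+(\text{already determined})|\le N$, admitting at most $\lfloor 2N/c_j\rfloor+1\le 1+2N/c_j$ values of $k_j$. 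Multiplying over $j$ yields $\prod_{j=1}^d(1+2N/c_j)=(2N+1)^{d-s}(1+2N/p)^s$.

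The only non-routine step is the structural claim that every $c_k\in\{1,p\}$, and it is the one that genuinely uses $\f p$ being a maximal ideal of residue characteristic $p$ rather than just a sublattice of index $p^s$; without the hypothesis $pR\subset\f p$ one would only recover the weaker bound $p^{d-s}(1+2N/p)^d$ from Hermite normal form. Everything else is sequential counting and elementary $\mathbb Z$-module bookkeeping.
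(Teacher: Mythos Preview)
Your argument is correct. The Hermite-normal-form construction of a triangular $\mathbb Z$-basis of $\f p$ with diagonal entries in $\{1,p\}$ is sound, and the back-substitution count is clean.

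The paper's proof reaches the same bound by a shorter and somewhat different route. Rather than working with a basis of the lattice $\f p$, it works modulo $p$: since $pR\subset\f p$, the reduction $R\to\mathbb F_{\f p}$ factors through $R/pR=\bigoplus_i\mathbb F_p\bar w_i$, and the images $\bar w_1,\ldots,\bar w_d$ span the $s$-dimensional $\mathbb F_p$-space $\mathbb F_{\f p}$. After reordering so that $\bar w_1,\ldots,\bar w_s$ form an $\mathbb F_p$-basis of $\mathbb F_{\f p}$ (harmless, since $R(N)$ is symmetric under permuting coordinates), one may choose $y_{s+1},\ldots,y_d\in[-N,N]$ freely and then the residues of $y_1,\ldots,y_s$ modulo $p$ are forced, giving the same product $(2N+1)^{d-s}(1+2N/p)^s$. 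So the paper uses a linear-algebra splitting over $\mathbb F_p$, whereas you use a lattice-theoretic (Hermite) splitting over $\mathbb Z$; your approach is a bit more machinery but makes the structural point about the elementary divisors of $\f p$ explicit, while the paper's argument is shorter and avoids building any new basis.
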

\begin{proof} We may assume (after renumbering, if necessary) that $w_1,\ldots,w_s$ is a basis of
$\mathbb F_{\f p}$ over $\mathbb F_{p}$. Consider a residue class $a\in \mathbb F_{\f p}$.
To get an element $\sum_{i=1}^d y_iw_i\in R(N)$
in the given residue class $a$ we may choose arbitrarily  integers
$y_{s+1},\ldots,y_d\in [-N,N]$ and then the residue classes of
$y_1,\ldots,y_s$ modulo $p$ are uniquely determined. Thus each $y_i$, $i\leq s$, can be chosen in at most
$1+2N/p$ ways.
\end{proof}


\begin{nle}\label{big}
Let $f\in R[x_1,\ldots,x_{n-1}]$, $g=g_0x_n^k+\ldots+g_k\in
R[x_1,\ldots,x_n]$, where $g_0,\ldots,g_k\in
R[x_1,\ldots,x_{n-1}]$ and $g_0\neq 0$. Consider the set
\[D(N)=\{(x_1,\ldots,x_n)\in R(N)^n: f(x_1,\ldots,x_{n-1})\neq 0 \ \text{ and there exists a} \]
\[\text{maximal ideal $\f p$ with $ \N(\f p)>N$ and such that} \
f(x_1,\ldots,x_{n-1})\in \f p,\]
\[ g(x_1,\ldots,x_n)\in\f p,\ \text{and}\   g_0(x_1,\ldots,x_{n-1})\not\in \f p\}.\]
Then there is a constant $c$ such that $|D(N)|\leq c(2N+1)^{dn-1}$ for all $N$.
\end{nle}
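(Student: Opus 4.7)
The plan is to fix the first $n-1$ coordinates $(x_1,\ldots,x_{n-1}) \in R(N)^{n-1}$ with $a := f(x_1,\ldots,x_{n-1}) \ne 0$, bound the number of admissible values of $x_n \in R(N)$, and then sum the result over the at most $(2N+1)^{d(n-1)}$ tuples $(x_1,\ldots,x_{n-1})$. As in the proof of Corollary~\ref{bounded}, expanding $x_i = \sum_j y_{i,j} w_j$ exhibits $\N_{K/\mathbb Q}(a)$ as the value of a polynomial in $\mathbb Z[y_{i,j}]$ of degree $d \deg f$ with coefficients depending only on $f$ and the basis, so $|\N_{K/\mathbb Q}(a)| \le C_1 N^{d\deg f}$ for a constant $C_1$ independent of the tuple. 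Any maximal ideal $\f p$ appearing in the definition of $D(N)$ for this tuple must contain $a$, and for distinct such ideals $\f p_1,\ldots,\f p_r$ with $\N(\f p_i) > N$, the Chinese Remainder Theorem yields
\[
N^r < \prod_{i=1}^r \N(\f p_i) = \left| R/\bigcap_i \f p_i \right| \le |R/aR| = |\N_{K/\mathbb Q}(a)| \le C_1 N^{d\deg f},
\]
so $r$ is bounded by a constant $C_2$ depending only on $f$ and $R$, uniformly in $(x_1,\ldots,x_{n-1})$ for $N$ large.

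Next, for each such $\f p$ with $\N(\f p) = p^s$, the hypothesis $g_0(x_1,\ldots,x_{n-1}) \notin \f p$ makes the reduction of the one-variable polynomial $g(x_1,\ldots,x_{n-1},X)$ modulo $\f p$ a polynomial of degree exactly $k$ in $\F_{\f p}[X]$, hence it has at most $k$ roots. It remains to bound the number of lifts to $R(N)$ of a single residue class of $x_n$ modulo $\f p$. Since $p^s > N$ forces $p > N^{1/s}$ and therefore $1 + 2N/p \le 3N^{1-1/s}$ for $N$ sufficiently large, Lemma~\ref{residues} gives the lift bound
\[
(2N+1)^{d-s}(1+2N/p)^s \le 3^s(2N+1)^{d-s}N^{s-1} = O(N^{d-1})
\]
with an implicit constant depending only on $d$. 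Multiplying through, for each $(x_1,\ldots,x_{n-1})$ with $a \ne 0$ the number of admissible $x_n$ is at most $C_2 \cdot k \cdot O(N^{d-1}) = O(N^{d-1})$, and summing over the first $n-1$ coordinates yields $|D(N)| = O((2N+1)^{d(n-1)} \cdot N^{d-1}) = O((2N+1)^{dn-1})$, which is the desired estimate.

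The main obstacle is the uniform $O(N^{d-1})$ lift estimate: the bound coming from Lemma~\ref{residues} must absorb every possible splitting behaviour $1 \le s \le d$ of $\f p$, and it succeeds precisely because the constraint $\N(\f p) > N$ balances the two factors $(2N+1)^{d-s}$ and $(1+2N/p)^s$ against one another in every case. Once this is in place, the rest of the argument is the elementary observation that an element of $R$ whose norm grows at most polynomially in $N$ has only boundedly many prime ideal divisors of norm exceeding $N$.
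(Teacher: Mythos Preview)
Your argument is correct and follows essentially the same three-step structure as the paper's proof: bound the number of maximal ideals $\f p$ of norm $>N$ containing $f(x_1,\ldots,x_{n-1})$, then for each such $\f p$ bound the number of residue classes of $x_n$ by $k$, and finally bound the lifts via Lemma~\ref{residues}. The only minor difference is that you control the number of large prime divisors directly in $R$ via the Chinese Remainder Theorem and the identity $|R/aR|=|\N_{K/\mathbb Q}(a)|$, whereas the paper passes through $O_K$; your route is in fact slightly cleaner.
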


\begin{proof} There are positive integers $w$, $c_1$ such that
\[ |\N_{K/\mathbb Q}f(x_1,\ldots,x_{n-1})|\leq c_1N^w\]
for any $N\geq 1$ and any $x_i\in R(N)$, $i=1,\ldots,n-1$. If $N>c_1$ and $f(x_1,\ldots,x_{n-1})\neq 0$, then
$f(x_1,\ldots,x_{n-1})$ belongs to at most $w$ maximal ideals $\f p$ such that $\N(\f  p)>N$.
In fact, if there were more than $w$ maximal ideals in $R$ with norm exceeding $N$ which contain
$f(x_1,\ldots,x_{n-1})$ then $f(x_1,\ldots,x_{n-1})$ would belong to at least $w+1$ maximal ideals
of $O_K$ of norm exceeding $N$ and this would imply that $|\N_{K/\mathbb Q}f(x_1,\ldots,x_{n-1})|>N^{w+1}$,
which is not possible. Let
\[G(N,\f p)=\{(x_1,\ldots,x_{n-1})\in R(N)^{n-1}:  f(x_1,\ldots,x_{n-1})\in \f p-\{0\}\}.\]
Thus, if $N>c_1$ and $(x_1,\ldots,x_{n-1})\in R(N)^{n-1}$, then there are at most
$w$ maximal ideals $\f p$ such that $\N(\f p)>N$ and $(x_1,\ldots,x_{n-1})\in G(N,\f p)$. Let $N>c_1$.
Fix a point $(x_1,\ldots,x_{n-1})\in R(N)^{n-1}$ and let $\f p$ be a maximal
ideal such that $\N(\f p)>N$ and $(x_1,\ldots,x_{n-1})\in G(N,\f p)$. We want to find an upper bound
for the number of $x_n\in R(N)$ such that $g(x_1,\ldots,x_n)\in\f p$ and $g_0(x_1,\ldots,x_{n-1})\not\in \f p$.
All such $x_n$ split into at most $k$ residue classes modulo $\f p$ (which correspond to the roots
of $g(x_1,\ldots,x_{n-1},x)=0$ in $\mathbb F_{\f p}$).
Let $\N(\f p)=p^s$, where $p$ is the characteristic of $\mathbb F_{\f p}$.
By Lemma~\ref{residues}, the number of $x_n\in R(N)$ which belong to a given
residue class modulo $\f p$ is at most
\[(2N+1)^{d-s}\max(2,2(2N+1)/p)^s\leq \max(2^s(2N+1)^{d-s}, 2^s(2N+1)^s/p^s)\leq\]
\[\leq  3\cdot 2^s\cdot(2N+1)^{d-1} \]
(we use the inequalities $1+2N/p\leq \max(2,2(2N+1)/p)$ and  $p^s>N\geq (2N+1)/3$).
It follows that there are at most $w\cdot k\cdot 3\cdot 2^s (2N+1)^{d-1}$ values of $x_n\in R(N)$
such that $(x_1,\ldots,x_n)\in D(N)$. Hence, if $N>c_1$, then
\[|D(N)|\leq (2N+1)^{d(n-1)}\cdot w\cdot k \cdot 3\cdot 2^s\cdot (2N+1)^{d-1}\leq c(2N+1)^{dn-1},\]
where $c=3\cdot 2^s\cdot w\cdot k$. We can increase $c$ if necessary so that the inequality
$|D(N)|\leq c(2N+1)^{dn-1}$ holds for all $N$.
\end{proof}

\begin{nle}\label{medium}
Let $f$ be a non-zero polynomial in $R[x_1,\ldots,x_{n-1}]$ and let
$g=g_0x_n^k+\ldots+g_k\in R[x_1,\ldots,x_n]$, where
$g_0,\ldots,g_k\in R[x_1,\ldots,x_{n-1}]$, $g_0\neq 0$. For a
maximal ideal $\f p$ of $R$ consider the set
\[D_{\f p}(N)=\{(x_1,\ldots,x_n)\in R(N)^n: f(x_1,\ldots,x_{n-1})\in \f p, \]
\[g(x_1,\ldots,x_n)\in \f p, \ \text{and}\ g_0(x_1,\ldots,x_{n-1})\not\in \f p\}\]
Then, if $\N(\f p)\leq N$ and the reduction of $f$ modulo $\f p$ is not zero, we have
\[|D_{\f p}(N)|\leq 2^{nd}(\deg f)k(2N+1)^{nd}/\N(\f p)^2.\]
\end{nle}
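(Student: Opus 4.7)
The plan is to count $D_{\f p}(N)$ in two stages: first bound the number of admissible tuples $(x_1,\dots,x_{n-1})\in R(N)^{n-1}$ with $f(x_1,\dots,x_{n-1})\in\f p$, and then, for each such tuple, bound the number of $x_n\in R(N)$ with $g(x_1,\dots,x_n)\in\f p$ and $g_0(x_1,\dots,x_{n-1})\notin\f p$. Write $\N(\f p)=p^s$, where $p$ is the residue characteristic, and recall $s\leq d$ since $R/\f p$ has $\mathbb F_p$-dimension at most the rank of $R$ as a $\mathbb Z$-module. The key auxiliary estimate is a clean lifting bound: combining Lemma \ref{residues} with the inequality $1+2N/p\leq 2(2N+1)/p$, which is valid because the hypothesis $\N(\f p)\leq N$ forces $p\leq N$, I can assert that every residue class in $\mathbb F_{\f p}$ lifts to at most $2^s(2N+1)^d/\N(\f p)$ elements of $R(N)$.

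For the first stage, the admissible $(x_1,\dots,x_{n-1})$ are precisely those whose residue classes modulo $\f p$ lie in the zero locus of the reduction $\bar f\in\mathbb F_{\f p}[x_1,\dots,x_{n-1}]$. Since $\bar f\neq 0$ by hypothesis, Lemma \ref{zeros} yields at most $(\deg f)\,\N(\f p)^{n-2}$ such residue-class tuples, and each one lifts coordinatewise to at most $(2^s(2N+1)^d/\N(\f p))^{n-1}$ tuples in $R(N)^{n-1}$.

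For the second stage, the condition $g_0(x_1,\dots,x_{n-1})\notin\f p$ ensures that the reduction of the one-variable polynomial $g(x_1,\dots,x_{n-1},X)$ modulo $\f p$ has degree exactly $k$ over $\mathbb F_{\f p}$, hence at most $k$ roots there. Lifting each root once more via the same estimate contributes at most $k\cdot 2^s(2N+1)^d/\N(\f p)$ admissible values of $x_n$. Multiplying the bounds from the two stages yields
\[|D_{\f p}(N)|\leq (\deg f)\,\N(\f p)^{n-2}\left(\frac{2^s(2N+1)^d}{\N(\f p)}\right)^{n-1}\cdot k\cdot\frac{2^s(2N+1)^d}{\N(\f p)}=\frac{(\deg f)\,k\,2^{sn}(2N+1)^{dn}}{\N(\f p)^2},\]
and replacing $2^{sn}$ by $2^{dn}$ (using $s\leq d$) gives the desired inequality. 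The main subtlety in executing this plan lies in verifying that the powers of $\N(\f p)$ introduced by Lemma \ref{zeros} and consumed by the lifting estimates telescope exactly to $\N(\f p)^{-2}$ in the end; once the inequality $p\leq N$ is used to justify the clean lifting bound, the rest is straightforward bookkeeping built on Lemmas \ref{zeros} and \ref{residues}.
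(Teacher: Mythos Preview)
Your proof is correct and follows essentially the same approach as the paper: bound the number of residue classes in $\mathbb F_{\f p}^n$ using Lemma~\ref{zeros} (at most $(\deg f)\,\N(\f p)^{n-2}$ for the first $n-1$ coordinates, then at most $k$ choices for the last since $g_0\not\equiv 0$), and then lift each residue class via Lemma~\ref{residues} together with $1+2N/p\leq 2(2N+1)/p$ and $s\leq d$. The only cosmetic difference is that the paper first bounds $|Z_{\f p}|\leq (\deg f)k\,\N(\f p)^{n-2}$ for the full image in $\mathbb F_{\f p}^n$ and then lifts all $n$ coordinates at once, whereas you lift the first $n-1$ coordinates and the last coordinate in separate stages; the arithmetic is identical.
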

\begin{proof} The image $Z_{\f p}$ of $D_{\f p}(N)$ in $\mathbb F_{\f p}^n$
consists of (some) solutions to $f=0=g$ in $\mathbb F_{\f p}^n$ (we use the same notation for a polynomial
and its reduction modulo $\f p$) . Now
$f=0$ has at most $(\deg f)\N(\f p)^{n-2}$ solutions in $\mathbb
F_p^{n-1}$ (Lemma \ref{zeros}) and each such solution extends to at most
$k$ solutions of $g=0$, $g_0\neq 0$ in $\mathbb F_{\f p}^n$. Thus
$|Z_{\f p}|\leq (\deg f)k\N(\f p)^{n-2}$. Each
element of $Z_{\f p}$ lifts to no more that $[(2N+1)^{d-s}(1+2N/p)^s]^n$ elements of
$D_{\f p}(N)$ by Lemma \ref{residues}, where $\N(\f p)=p^s$. Thus
\[|D_{\f p}(N)|\leq (\deg f)k\N(\f p)^{n-2}[(2N+1)^{d-s}(1+2N/p)^s]^n\leq \]
\[\leq  (\deg f)k\N(\f p)^{n-2}(2N+1)^{n(d-s)}[2^s(2N+1)^s/p^s]^n \leq \]
\[\leq  2^{nd}(\deg f)k\N(\f p)^{n-2}(2N+1)^{nd}/\N(\f p)^n =
2^{nd}(\deg f)k(2N+1)^{nd}/\N(\f p)^2.\]
\end{proof}

\begin{pro}\label{mainpro}
Let $f,g\in R[x_1,\ldots,x_n]$ be polynomials which are
relatively prime as polynomials in $K[x_1,\ldots,x_n]$.
Define
\[ W_M=W_M(f,g)=\{{\mathbf r}=(r_1,\ldots,r_n)\in R^n: \text{ there is a maximal  ideal $\f p$}\]
\[ \text{of $R$, with $\N(\f p) >M$ and such that } f({\mathbf r})\in \f p \ \text{and}\  g({\mathbf r})\in \f p\}.\]
There is a constant $c>0$ such that
\[|W_M\cap R(N)^n|\leq c\frac{(2N+1)^{nd}}{M}\]
for any integers $N>M\geq 1$.
\end{pro}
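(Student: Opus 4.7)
The plan is to prove the proposition by induction on $n$, using resultants to reduce dimension and invoking Lemmas~\ref{big} and \ref{medium} to handle the main contributions. After a linear change of variables $x_1,\ldots,x_n$ (which affects the estimate only by a bounded multiplicative constant), we may assume that $g=g_0x_n^k+g_1x_n^{k-1}+\cdots+g_k$ has positive degree $k\geq 1$ in $x_n$, with leading coefficient $g_0\in R[x_1,\ldots,x_{n-1}]$ nonzero. Let $h=\text{Res}_{x_n}(f,g)\in R[x_1,\ldots,x_{n-1}]$. Since $f$ and $g$ are coprime in $K[x_1,\ldots,x_n]$, the polynomial $h$ is nonzero; the standard resultant identity produces $u,v\in R[x_1,\ldots,x_n]$ with $h=uf+vg$, so that $f(\mathbf{r}),g(\mathbf{r})\in \f p$ forces $h(r_1,\ldots,r_{n-1})\in \f p$.

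Next I would decompose $W_M\cap R(N)^n$ into three pieces and bound each by $O((2N+1)^{nd}/M)$. First, the piece on which $h(r_1,\ldots,r_{n-1})=0$: Proposition~\ref{hyper} bounds the number of such $(r_1,\ldots,r_{n-1})\in R(N)^{n-1}$ by $O((2N+1)^{(n-1)d-1})$, and $r_n$ ranges freely over $R(N)$, so the total is $O((2N+1)^{nd-1})\leq O((2N+1)^{nd}/M)$ because $M<N$. Second, the piece on which $h(r_1,\ldots,r_{n-1})\neq 0$ and $g_0(r_1,\ldots,r_{n-1})\notin \f p$: Lemma~\ref{big}, applied with $h$ in the role of its ``$f$'', controls the primes with $\N(\f p)>N$, yielding $O((2N+1)^{nd-1})$; for $M<\N(\f p)\leq N$, Lemma~\ref{medium} gives the per-prime bound $O((2N+1)^{nd}/\N(\f p)^2)$, and the standard tail estimate $\sum_{\N(\f p)>M}\N(\f p)^{-2}=O(1/M)$ finishes this piece. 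The finitely many primes at which $h$ reduces to zero modulo $\f p$ contribute only a bounded correction.

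The remaining piece---where $h(r_1,\ldots,r_{n-1})\neq 0$ but $g_0(r_1,\ldots,r_{n-1})\in \f p$---is the main obstacle, and I would dispose of it by induction on $n$. At such a point $(r_1,\ldots,r_{n-1})$ is a common mod-$\f p$ zero of the nonzero polynomials $g_0,h\in R[x_1,\ldots,x_{n-1}]$. Dividing by their gcd in $K[x_1,\ldots,x_{n-1}]$ and clearing denominators by some nonzero $a\in R$ produces coprime polynomials $\tilde g_0,\tilde h\in R[x_1,\ldots,x_{n-1}]$ whose common mod-$\f p$ zeros coincide with those of $g_0,h$ for every $\f p$ avoiding the finite set of primes dividing $a$. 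The inductive hypothesis applied to $(\tilde g_0,\tilde h)$ bounds the corresponding subset of $R(N)^{n-1}$ by $O((2N+1)^{(n-1)d}/M)$, and multiplication by $(2N+1)^d$ for the unconstrained coordinate $r_n$ yields the required bound; the finitely many primes dividing $a$ contribute only a lower-order term absorbable into the constant. The base case $n=1$ is immediate: the nonzero resultant $h\in R$ lies in only finitely many maximal ideals, each residue supports at most $\min(\deg f,\deg g)$ common zeros of $f,g$ mod $\f p$, and the lifts to $R(N)$ are controlled by Lemma~\ref{residues}.
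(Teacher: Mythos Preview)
Your overall architecture---take the $x_n$-resultant, split according to whether $g_0$ lies in $\f p$ or not, and invoke Lemmas~\ref{big} and \ref{medium} for the ``good'' piece---matches the paper's approach in spirit. Pieces~1 and~2 are handled correctly, and the base case $n=1$ is fine.

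The gap is in Piece~3. You correctly observe that on this piece the projection $(r_1,\ldots,r_{n-1})$ lies in $W_M(g_0,h)$, and that you would be done if $g_0$ and $h$ were coprime in $K[x_1,\ldots,x_{n-1}]$. But they need not be, and your proposed fix is wrong: if $d=\gcd(g_0,h)$ has positive degree, then every mod-$\f p$ zero of $d$ is a common mod-$\f p$ zero of $g_0,h$, yet typically \emph{not} a common zero of the reduced pair $(\tilde g_0,\tilde h)$. So the claim that the common mod-$\f p$ zeros ``coincide'' is false, and the inductive bound on $W_M(\tilde g_0,\tilde h)$ does not control $W_M(g_0,h)$. (Concretely: with $n=2$, $f=x_2^2-x_1$, $g=x_1(x_2+x_1-1)$ one gets $g_0=x_1$, $h=x_1^2(x_1^2-3x_1+1)$, so $\tilde g_0=1$ and $W_M(\tilde g_0,\tilde h)=\emptyset$, while $W_M(g_0,h)$ contains every $r_1$ lying in some $\f p$ of norm $>M$.)

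The paper circumvents exactly this obstacle by adding a layer to the induction. It first factors $f$ and $g$ into irreducibles in $K[x_1,\ldots,x_n]$, reducing to the case where both are irreducible; then, after passing via the resultant to a factor $r_i\in R[x_1,\ldots,x_{n-1}]$ irreducible in $K[x_1,\ldots,x_{n-1}]$ in the role of $f$, it performs a \emph{secondary} induction on $\deg_{x_n} g$. The point is that with $f$ irreducible there is a clean dichotomy: either $f\mid g_0$, in which case $W_M(f,g)\subseteq W_M(f,u(g-g_0x_n^k))$ for suitable $u\in R$ and the $x_n$-degree of $g$ drops, or $f\nmid g_0$, in which case $f$ and $g_0$ are genuinely coprime and the induction on $n$ applies to $W_M(f,g_0)$. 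Your argument is missing precisely this degree-lowering mechanism; without it the reduction to a coprime pair in $n-1$ variables cannot be completed.
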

\begin{proof}
We use induction on the number $n$ of variables. Note that if $f,g$ are polynomials in $n$ variables
for which the result holds, then it also holds for $f,g$ considered as polynomials in $n+1$ variables.
When $n=0$ the result is clear. Suppose the result is true for less
than $n\geq 1$ variables. Consider two relatively prime (in $K[x_1,\ldots,x_n]$) polynomials
$f,g\in R[x_1,\ldots,x_n]$

The first step is to establish the proposition under the additional assumption that
$f$ is irreducible in $K[x_1,\ldots,x_n]$ and does not depend on $x_n$ (i.e. $f\in R[x_1,\ldots,x_{n-1}]$).
Let $g=g_0x_n^k+\ldots+g_k$, where
$g_0,\ldots,g_k\in R[x_1,\ldots,x_{n-1}]$, $g_0\neq 0$. We fix $f$ and proceed by induction on the degree $k$
of $g$ in $x_n$. If $k=0$ then
$g\in R[x_1,\ldots,x_{n-1}]$ and the result follows by our inductive assumption that the proposition
holds for polynomials in $n-1$ variables. Suppose that $k>0$ and the result holds for all
polynomials $g$ whose degree in $x_n$ is less than $k$ (and which are relatively prime to $f$).
We may write $ag=\prod h_i$ for some non-zero $a$ in $R$ and polynomials $h_i \in R[x_1,\ldots,x_{n}]$  which are
irreducible in $K[x_1,\ldots,x_{n}]$. Note that $W_M(f,g)\subseteq \bigcup W_M(f,h_i)$. Thus, if we show the
proposition for each pair $f, h_i$, then it will also hold for the pair $f,g$.  In other words,
we may assume that $g$ is irreducible in $K[x_1,\ldots,x_{n}]$.
If $f|g_0$ in $K[x_1,\ldots,x_{n-1}]$, then there is a non-zero $u\in R$ such that $f|ug_0$ in
$R[x_1,\ldots,x_{n-1}]$. It follows that
$W_M(f,g)\subseteq W_M(f,u(g-g_0x_n^k))$ for all $M$. Since $u(g-g_0x_n^k)$ has
degree in $x_n$ smaller than $k$, the result holds for $f,u(g-g_0x_n^k)$ by our inductive assumption and
therefore it also holds for the pair $f, g$.
Thus we may assume that $f$ does not divide $g_0$ in $K[x_1,\ldots,x_{n-1}]$. Since $f$ is irreducible,
$f$ and $g_0$ are relatively prime in $K[x_1,\ldots,x_{n-1}]$. For $N> M$ we have
\[ W_M(f,g)\cap R(N)^n\subseteq \]
\[(W_M(f,g_0) \cap R(N)^n)\cup Z(f, N)\cup D(N) \cup \bigcup_{\f p:M < \N(\f p)\leq N}
D_{\f p}(N), \]
where
\[Z(f, N)=\{{\mathbf r}=(r_1,\ldots,r_n)\in R(N)^n: f({\mathbf r})=0 \}, \]
\[ D(N)=\{{\mathbf r}\in R(N)^n: \ \text{there is a maximal ideal $\f p$ such that $ \N(\f p)>N$, }\]
\[\
f({\mathbf r})\in \f p-\{0\}, g({\mathbf r})\in\f p, \ \text{and}\  g_0({\mathbf r})\not\in \f p\},\]
\[D_{\f p}(N)=\{{\mathbf r}\in R(N)^n: f({\mathbf r})\in \f p, g({\mathbf r})\in\f p,  g_0({\mathbf r})
\not\in \f p\}.\]
By our inductive assumption that the proposition holds for polynomials in $n-1$ variables, there is $c_1>0$
such that $|W_M(f,g_0)\cap R(N)^n|\leq c_1(2N+1)^{dn}/M$ for any integers $N>M\geq 1$.
Note that if $f({\mathbf r})=0$ then $(f-1)({\mathbf r})R=R$. It follows by Corollary \ref{bounded}
applied to the polynomial $f-1$ and the ideal $J=R$
that
\[|Z(f, N)|\leq \delta(R)d(\deg f)(2N+1)^{dn-1}\leq c_2 \frac{(2N+1)^{dn}}{M}\]
for some $c_2>0$ and all $N>M\geq 1$.
Lemma \ref{big} assures the existence of $c_3>0$ such that $|D(N)|\leq c_3 (2N+1)^{dn-1}\leq c_3 (2N+1)^{dn}/M$.
Finally, by
Lemma \ref{medium}, there are constants $c_4>0$, $c_5>0$ such that
\[|\bigcup_{\f p:M < \N(\f p)\leq N}D_{\f p}(N)|\leq \sum_{\f p:M < \N(\f p)\leq N}|D_{\f p}(N)|\leq\]
\[\leq \sum_{\f p:M < \N(\f p)\leq N}2^{nd}(\deg f)d\frac{(2N+1)^{nd}}{\N(\f p)^2} \leq
c_4(2N+1)^{nd}\sum_{\f p:M < \N(\f p)}\N(\f p)^{-2}\leq
\]
\[\leq c_4(2N+1)^{nd}d\sum_{m>M}\frac{1}{m^2}\leq
c_5\frac{(2N+1)^{nd}}{M}.\]
It follows that $|W_M(f,g)\cap R(N)^n|\leq c(2N+1)^{nd}/M$, where $c=c_1+c_2+c_3+c_5$.
This completes our first step, i.e. establishes the proposition under the additional assumption that
$f$ is irreducible in $K[x_1,\ldots,x_n]$ and does not depend on $x_n$.

Our second step is to prove the proposition when both $f$ and $g$ are
irreducible in $K[x_1,\ldots,x_n]$. Consider $f,g$ as polynomials in $x_n$ with
coefficients in $R[x_1,\ldots,x_{n-1}]$. If one of these polynomials does not depend on $x_n$, the proposition
holds by our first step. Suppose that the degrees
with respect to $x_n$ of both $f$ and $g$ are positive. Let
$r=\text{Res}(f,g)$ be the resultant of $f$ and $g$, so $r$ is a non-zero polynomial in
$R[x_1,\ldots,x_{n-1}]$. Recall that $r=af+bg$ for some polynomials
$a,b\in R[x_1,\ldots,x_n]$ (see \cite[Section 3.1]{cox} for a nice account of properties
of resultants). It follows that
$W_M(f,g)\subseteq W_M(f,r)\cap W_M(g,r)$. Since $f$ and $g$ are
irreducible, $g$ and $r$ have no common factor in
$K[x_1,\ldots,x_n]$ (otherwise $g$ would not depend on $x_n$).
We may write $ar=\prod r_i$, where $r_i \in R[x_1,\ldots,x_{n-1}]$ are irreducible
in $K[x_1,\ldots,x_{n-1}]$ and $a\in R$ is non-zero. Clearly $W_M(f,g)\subseteq W_M(r,g)\subseteq \bigcup W_M(r_i,g)$.
Since the proposition holds for each pair $r_i, g$ by the first step, it also holds for the pair $f,g$.

Finally, without any additional assumptions,
we may write $af=\prod f_i$, $bg=\prod
g_i$, where $f_i, g_j\in R[x_1,\ldots,x_n]$ are irreducible
in $K[x_1,\ldots,x_n]$ and $a, b\in R-\{0\}$. Clearly $W_M(f,g)\subseteq \bigcup W_M(f_i,g_j)$.
Since the result holds for each pair $f_i,g_j$, it also holds for $f,g$.

\end{proof}

\begin{cl}\label{Main}
Let $T=\{f_1,\ldots,f_s\}$  be a finite set of polynomials in $R[x_1,\ldots,x_n]$ which do not have any
common non-constant divisor in $K[x_1,\ldots,x_n]$. Define
\[ W_M=W_M(T)=\{{\mathbf r}=(r_1,\ldots,r_n)\in R^n: \text{ there is a maximal ideal $\f p$} \]
\[ \text{of $R$ with $\N(\f p) >M$ and such that } f({\mathbf r})\in \f p \ \text{for every $f\in T$}\}.\]
There is a constant $c>0$ such that $|W_M\cap R(N)^n|\leq c(2N+1)^{nd}/M$ for any integers $N>M\geq 1$.
\end{cl}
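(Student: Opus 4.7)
The plan is to reduce the corollary to the coprime-pair case already established in Proposition~\ref{mainpro}. The idea is to replace the whole set $T$ by a pair $\{f_1,g\}$ of polynomials that are relatively prime in $K[x_1,\ldots,x_n]$, where $g$ is an $R$-linear combination of $f_2,\ldots,f_s$. The inclusion $W_M(T)\subseteq W_M(\{f_1,g\})$ is then immediate, since if $f_i(\mathbf r)\in\f p$ for all $i$, then $g(\mathbf r)=\sum c_if_i(\mathbf r)\in\f p$ as well.

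First I would handle the trivial cases. If some $f_i$ is a non-zero constant in $R$, then $f_i\in\f p$ for only finitely many maximal ideals $\f p$, each of bounded norm, so $W_M=\emptyset$ once $M$ exceeds that bound; if every $f_i$ were constant, the hypothesis $\gcd(f_1,\ldots,f_s)=1$ in $K[x_1,\ldots,x_n]$ would force them to be units or zero and the statement is vacuous. After relabeling I may therefore assume $f_1$ is non-constant and not all of $f_2,\ldots,f_s$ vanish.

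The key step is to produce $g=c_2f_2+\cdots+c_sf_s$ with $c_i\in R$, $g\neq 0$, and having no common non-constant factor with $f_1$ in $K[x_1,\ldots,x_n]$. Let $p_1,\ldots,p_t$ be the distinct irreducible factors of $f_1$ in $K[x_1,\ldots,x_n]$. Because $\gcd(f_1,\ldots,f_s)=1$, for each $j$ there is some $i\geq 2$ with $p_j\nmid f_i$, so the $K$-linear map
\[
\phi_j\colon K^{s-1}\longrightarrow K[x_1,\ldots,x_n]/(p_j),\qquad (c_2,\ldots,c_s)\longmapsto \sum_{i=2}^s c_i\bar f_i
\]
is non-zero and its kernel $H_j$ is a proper $K$-subspace of $K^{s-1}$. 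The subspace $H_0=\{(c_i):\sum c_if_i=0\}$ is likewise proper, since some $f_i$ with $i\geq 2$ is non-zero. Because $K$ is infinite, $\bigcup_{j=0}^{t}H_j$ cannot exhaust $K^{s-1}$, so I can pick $(c_2,\ldots,c_s)$ outside this union; clearing denominators by multiplying through by a non-zero element of $R$ (which does not affect divisibility by any non-constant $p_j$) places the $c_i$ in $R$. The resulting $g$ has the required properties.

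Applying Proposition~\ref{mainpro} to the coprime pair $(f_1,g)$ yields a constant $c>0$ with $|W_M(\{f_1,g\})\cap R(N)^n|\leq c(2N+1)^{nd}/M$ for all $N>M\geq 1$, and the inclusion $W_M(T)\subseteq W_M(\{f_1,g\})$ finishes the proof. The only substantive point is the hyperplane-avoidance argument that produces $g$; once that is in hand, everything is a direct reduction to the already-proved case of two coprime polynomials.
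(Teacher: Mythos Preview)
Your proof is correct and takes a genuinely different route from the paper's. The paper factors each $d_if_i$ into irreducibles $f_{i,j}$ over $K$ and argues that $W_M(T)\subseteq\bigcup W_M(f,g)$, the union taken over all relatively prime pairs $(f,g)$ drawn from the finite list of $f_{i,j}$; Proposition~\ref{mainpro} is then applied to each pair and the bounds are summed. Your argument instead manufactures a single auxiliary polynomial $g=\sum_{i\ge 2}c_if_i$ coprime to $f_1$ by a hyperplane-avoidance trick over the infinite field $K$, and needs only one invocation of Proposition~\ref{mainpro}. Your route is tidier and avoids factoring altogether; the paper's route is more pedestrian but does not rely on $K$ being infinite.

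One small wrinkle: your treatment of the trivial cases is slightly garbled. The hypothesis is ``no common non-constant divisor in $K[x_1,\ldots,x_n]$'', which is automatic for constants, not ``$\gcd=1$''. The repair is immediate: if some $f_i$ is a non-zero constant you finish exactly as you say (and for $M$ below the finite bound one simply enlarges $c$), while if all $f_i$ were zero then every non-constant polynomial would be a common divisor, contradicting the hypothesis. With that adjusted, your reduction to the non-constant $f_1$ with some $f_i$ ($i\ge 2$) non-zero is sound, and the hyperplane argument produces the desired $g$.
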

\begin{proof} We may write $d_if_i=\prod f_{i,j}$, where $f_{i,j}\in R[x_1,\ldots,x_n]$ are irreducible
in $K[x_1,\ldots,x_n]$ and $d_i\in R$ are non-zero. Then
\[W_M\subseteq \bigcup W_M(f,g),\]
where the union is over all pairs $f,g$
such that both $f$ and $g$ are among the polynomials $f_{i,j}$ and they are relatively prime.
Thus the result follows by Proposition \ref{mainpro}.
\end{proof}

Corollary~\ref{Main} is the main ingredient in our proof of Theorem~\ref{main}. In fact, the proof now
reduces to a fairly straightforward application of the inclusion-exclusion formula and the
Chinese Remainder Theorem. For the benefit of the reader we provide a detailed argument.

\begin{nle}\label{ideals}
Let $I$ be a non-zero ideal of $R$. If $m$ is a positive integer  such that $mR\subseteq I$
then
\[ \frac{(2N-m)^d}{\N(I)}\leq |(a+I)\cap R(N)|\leq \frac{(2N+m)^d}{\N(I)} \]
for any $a\in R$ and any $N$ such that $2N\geq m$.
\end{nle}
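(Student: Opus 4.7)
The plan is to reduce the count of elements of $R(N)$ in the coset $a+I$ to a standard lattice-point count in a box, using the containment $mR\subseteq I$ to change the problem from counting in a general sublattice to counting in the very regular sublattice $mR$.

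First I would use the chain $mR\subseteq I\subseteq R$ to write $a+I$ as a disjoint union of cosets of $mR$. Since $R$ is $\mathbb Z$-free of rank $d$ with basis $w_1,\ldots,w_d$, we have $|R/mR|=m^d$ and $|R/I|=\N(I)$, so the index $[I:mR]=m^d/\N(I)$. Therefore
\[|(a+I)\cap R(N)|=\frac{m^d}{\N(I)}\cdot\bigl(\text{average count of }|(b+mR)\cap R(N)|\bigr),\]
and it suffices to bound $|(b+mR)\cap R(N)|$ uniformly in $b\in R$.

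Next I would estimate this latter quantity directly in coordinates. Writing $b=\sum b_iw_i$, an element $b+m\sum c_iw_i$ with $c_i\in\mathbb Z$ lies in $R(N)$ exactly when $|b_i+mc_i|\leq N$ for all $i$, i.e., when each $c_i$ lies in a real interval of length $2N/m$. A standard one-dimensional fact says that the number of integers in a real interval of length $L$ is at most $L+1$, and at least $L-1$ whenever $L\geq 1$. The hypothesis $2N\geq m$ guarantees $L=2N/m\geq 1$, so both bounds apply in each coordinate. Taking the product over $i=1,\ldots,d$ yields
\[\left(\frac{2N}{m}-1\right)^d\leq |(b+mR)\cap R(N)|\leq\left(\frac{2N}{m}+1\right)^d,\]
or equivalently $(2N-m)^d/m^d\leq|(b+mR)\cap R(N)|\leq(2N+m)^d/m^d$.

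Finally I would multiply these uniform per-coset bounds by the number $m^d/\N(I)$ of $mR$-cosets composing $a+I$; the factors of $m^d$ cancel, giving exactly
\[\frac{(2N-m)^d}{\N(I)}\leq |(a+I)\cap R(N)|\leq\frac{(2N+m)^d}{\N(I)}.\]
There is no real obstacle here; the only point requiring care is the one-dimensional interval count, where the lower bound $L-1$ is only valid for $L\geq 1$, which is precisely why the hypothesis $2N\geq m$ is imposed.
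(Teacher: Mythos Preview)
Your proof is correct and follows essentially the same approach as the paper: decompose $a+I$ into $m^d/\N(I)$ cosets of $mR$, bound each coset count coordinatewise via the one-dimensional interval estimate, and multiply. Your remark that the hypothesis $2N\geq m$ is what makes the lower-bound factor $2N/m-1$ non-negative (so that the product inequality is valid) is exactly the point.
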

\begin{proof} The ideal $I$ is a union
of $m^d/\N(I)$ cosets of $mR$. Thus any coset of $I$ is also a union of $m^d/\N(I)$ cosets
of $mR$. Any coset $H$ of $mR$ is of the form $\sum_{i=1}^{d} a_iw_i+mR$, where $0\leq a_i<m$.
Elements of $H\cap R(N)$ are exactly the elements of the form $\sum_{i=1}^{d} (a_i+mb_i)w_i$
with $|a_i+mb_i|\leq N$. Thus $(-N-a_i)/m\leq b_i\leq (N-a_i)/m$. Recall now that an
interval of length $l$ has at least $l-1$ and at most $l+1$ integers in it.
It follows that $(2N/m-1)^d\leq |H\cap R(N)|\leq (2N/m+1)^d$. Since $a+I$
is a disjoint union of $m^d/\N(I)$ cosets of $mR$, the result follows.
\end{proof}

\begin{nle}\label{ideals1}
Let $I$ be a non-zero ideal of $R$. If $V$ is a subset of $(R/I)^n$ and $V(N)$ is the set of elements
of $R(N)^n$ whose image in $(R/I)^n$ belongs to $V$ then
\[ \lim_{N\to \infty} \frac{|V(N)|}{(2N+1)^{nd}}=\frac{|V|}{\N(I)^n}.\]
\end{nle}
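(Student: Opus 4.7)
The plan is to reduce the lemma to the one-dimensional statement already established in Lemma~\ref{ideals}, which controls the counts $|(a+I) \cap R(N)|$ of individual cosets of $I$ in $R$.

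First I would decompose $V(N)$ as a disjoint union according to which element of $V$ each tuple reduces to. Writing each $\bar a = (\bar a_1,\ldots,\bar a_n) \in V$ with chosen representatives $a_1,\ldots,a_n \in R$, the preimage of $\bar a$ in $R^n$ is the product $(a_1+I)\times\cdots\times(a_n+I)$, so
\[ V(N) = \bigsqcup_{\bar a \in V} \bigl((a_1+I)\cap R(N)\bigr) \times \cdots \times \bigl((a_n+I)\cap R(N)\bigr). \]
Fix a positive integer $m$ with $mR \subseteq I$ (e.g. any positive integer in $I \cap \mathbb Z$, which exists since $I$ is nonzero). Then Lemma~\ref{ideals} applies to each factor, giving
\[ \frac{(2N-m)^d}{\N(I)} \leq |(a_i+I) \cap R(N)| \leq \frac{(2N+m)^d}{\N(I)} \]
for all $N$ with $2N \geq m$, uniformly in the coset representative.

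Multiplying the $n$ factor bounds and summing over $\bar a \in V$ yields the two-sided estimate
\[ |V|\cdot\frac{(2N-m)^{nd}}{\N(I)^n} \leq |V(N)| \leq |V|\cdot\frac{(2N+m)^{nd}}{\N(I)^n}. \]
Dividing through by $(2N+1)^{nd}$ and letting $N \to \infty$, both the ratios $(2N\pm m)^{nd}/(2N+1)^{nd}$ tend to $1$, so by the squeeze principle we obtain
\[ \lim_{N\to\infty} \frac{|V(N)|}{(2N+1)^{nd}} = \frac{|V|}{\N(I)^n}, \]
as required. There is no real obstacle here: the whole argument is a routine product-of-intervals estimate, and the only mild subtlety is ensuring that the bound $m$ chosen in applying Lemma~\ref{ideals} does not depend on the coset, which is immediate from the statement of that lemma.
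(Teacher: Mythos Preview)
Your proof is correct and follows essentially the same approach as the paper: both reduce to the coset count of Lemma~\ref{ideals}, multiply over the $n$ coordinates, and squeeze. The only cosmetic difference is that the paper first observes additivity in $V$ and reduces to a singleton, whereas you sum over all elements of $V$ directly---these are the same computation.
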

\begin{proof}
Since both sides of the equality are additive for disjoint unions,  it suffices to prove the lemma for
sets $V$ which contain only one element. In this case, there are
cosets $a_1+I,\ldots, a_n+I$ of $I$ such that $V(N)=((a_1+I)\cap R(N))\times\ldots\times ((a_n+I)\cap R(N))$.
There is a positive integer $m$ such that $mR\subseteq I$.
By Lemma~\ref{ideals}, we have
\[ \frac{(2N-m)^{dn}}{\N(I)^n}\leq |V(N)|\leq \frac{(2N+m)^{dn}}{\N(I)^n}\]
provided $2N\geq m$.
Dividing by $(2N+1)^{dn}$ and passing to the limit when $N\to \infty$, we get the result.
\end{proof}

\begin{proof}[{\rm {\bf Proof of Theorem \ref{main}.}}] If the polynomials in $T$ have a common divisor in $K[x_1,\ldots,x_n]$
the theorem holds by Corollary \ref{zero}. Thus we may assume that elements of $T$ do not have any
common non-constant divisor in $K[x_1,\ldots,x_n]$. For a prime ideal $\f p$ of $R$ define
\[ D_{\f p}=\{{\mathbf r}=(r_1,\ldots,r_n)\in R^n:   f({\mathbf r})\in \f p \ \text{for every $f\in T$}\}.\]
Let $\Phi$ be a finite set of maximal ideals of $R$. For any subset $\Psi$ of $\Phi$ we denote by $I(\Psi)$
the intersection of all the ideals in $\Psi$. Note that
\[D_{\Psi}:= \bigcap _{\f p\in \Psi}D_{\f p}=\{{\mathbf r}=(r_1,\ldots,r_n)\in R^n:
f({\mathbf r})\in I(\Psi) \ \text{for every $f\in T$}\}.\]
Let $V_{\Psi}$ be the image of $D_{\Psi}$ in $(R/I(\Psi))^n$. Thus $V_{\Psi}$ is simply the set of all common
zeros in  $(R/I(\Psi ))^n$ of the polynomials in $T$.
By the Chinese Remainder Theorem, we have $R/I(\Psi)\cong \prod_{\f p\in \Psi}R/\f p$ and under this
identification we have $V_{\Psi}=\prod_{\f p\in \Psi}V_{\f p}$.
It follows that $|V_{\Psi}|=\prod_{\f p \in \Psi}t_{\f p}$.
Applying Lemma \ref{ideals1} to the set $V_{\Psi}$ and observing that $V_{\Psi}(N)=D_{\Psi}\cap R(N)^n$ we get
\[ \lim_{N\to \infty}\frac{|D_{\Psi}\cap R(N)^n|}{(2N+1)^{nd}}=\frac{\prod_{\f p \in \Psi}t_{\f p}}{\N(I(\Psi))^n}=
\prod_{\f p \in \Psi}\frac{t_{\f p}}{\N(\f p)^n}.\]
Let $W_{\Phi}$ be the complement of the union $ \bigcup _{\f p\in \Phi}D_{\f p}$ in $R^n$. The inclusion-exclusion principle yields the following formula:
\[ |W_{\Phi}\cap R(N)^n|=\sum_{\Psi\subseteq \Phi}(-1)^{|\Psi|}|D_{\Psi}\cap R(N)^n|\]
 (where $D_{\emptyset}=R^n$), from which we immediately conclude that
\[  \lim_{N\to \infty}\frac{|W_{\Phi}\cap R(N)^n|}{(2N+1)^{nd}}=\sum_{\Psi\subseteq \Phi}(-1)^{|\Psi|}
\prod_{\f p \in \Psi}\frac{t_{\f p}}{\N(\f p)^n}=\prod_{\f p \in \Phi}\left(1-\frac{t_{\f p}}{\N(\f p)^n}\right). \]
Suppose now that $\Phi$ is the set of all prime ideals of norm $\leq M$. Note that
$S(T)\subseteq W_{\Phi}\subseteq S(T)\cup W_M(T)$, where $W_M(T)$ is defined in  Corollary~\ref{Main}
and $S(T)$ in the statement of Theorem~\ref{main}.
Thus
\[  |W_{\Phi}\cap R(N)^n|- |W_M(T)\cap R(N)^n|\leq|S(T)\cap R(N)^{n}|\leq |W_{\Phi}\cap R(N)^n|.\]
Note that Corollary~\ref{Main} implies that
\[ 0\leq \liminf_{N\to \infty} \frac{|W_M(T)\cap R(N)^n|}{(2N+1)^{dn}}\leq  \limsup_{N\to \infty}
\frac{|W_M(T)\cap R(N)^n|}{(2N+1)^{dn}}\leq \frac{c}{M}. \]
This yields
\[ \prod_{\f p: \N(\f p)\leq M}\left(1-\frac{t_{\f p}}{\N(\f p)^n}\right)-\frac{c}{M}\leq  \liminf_{N\to \infty}\frac{ |S(T)\cap R(N)^{n}|}{(2N+1)^{dn}}\leq \]
\[\limsup_{N\to \infty} \frac{|S(T)\cap R(N)^{n}|}{(2N+1)^{dn}}
\leq \prod_{\f p: \N(\f p)\leq M}\left(1-\frac{t_{\f p}}{\N(\f p)^n}\right).\]
Letting $M$ go to infinity we see that
\[\lim_{N\to \infty} \frac{|S(T)\cap R(N)^{n}|}{(2N+1)^{dn}} = \prod_{\f p\in\mspec R}\left(1-\frac{t_{\f p}}{\N(\f p)^n}
\right).\]
\end{proof}

\section{The smallest number of generators}
Let us return to our discussion of generators of algebras. We first show an application of
Theorem \ref{gendensity}. Let $A$ be an algebra over a commutative ring $R$, which is finitely
generated as an $R$-module.

\begin{definition}{\rm
$r=r(A)=r(A,R)$ is the smallest number of elements needed to generate
$A$ as an $R$-algebra.

\noindent
For a prime ideal $\f p$ of $R$ define
\[r_{\f p}=r_{\f p}(A)= r(A\otimes_R \kappa(\f p),\kappa(\f p)),\]
where
$\kappa(\f p)=R_{\f p}/\f pR_{\f p}$ is the field of fractions of $R/\f p$.}
\end{definition}

Note that $r_{\f p}$
is the smallest number of generators of $A_{\f p}$ as an $R_{\f p}$-algebra by Lemma \ref{localgen}. Clearly
$r_{\f p}\leq r$ for every $\f p\in \spec R$ and $r_{\f p}\leq r_{\f q}$ whenever $\f p\subseteq \f q$
by Corollary \ref{genopen}. The first main result of this section is the following theorem.

\begin{te}\label{denslenstra}
Let $R$ be an order in a number field $K$ and let $A$ be an $R$-algebra which is free as an $R$-module.
Suppose that $k\geq r_{\f p}$ for all prime ideals $\f p$ of $R$ and $k\geq 1+r_0$. Then $\den_k(A)>0$.
In particular, $k\geq r$.
\end{te}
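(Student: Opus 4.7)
The plan is to invoke Theorem~\ref{gendensity} to express $\den_k(A)$ as an infinite product, observe that each factor is positive, and then reduce positivity of the product to a codimension assertion that can be handled via Corollary~\ref{Main}.

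First, by Theorem~\ref{gendensity},
\[
\den_k(A)=\prod_{\f p\in\mspec R}\left(1-\frac{t_{\f p}}{\N(\f p)^{mk}}\right),\qquad t_{\f p}:=\N(\f p)^{mk}-g_k(\f p,A).
\]
The hypothesis $k\ge r_{\f p}$ for maximal $\f p$ yields $g_k(\f p,A)>0$, so every factor is strictly positive, and the infinite product is positive if and only if $\sum_{\f p}t_{\f p}/\N(\f p)^{mk}<\infty$.

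To prove this convergence I would use Lemma~\ref{polynomialsgen} to pick a finite set $T\subset R[x_{1,1},\ldots,x_{k,m}]$ whose common zeros in $\mathbb F_{\f p}^{km}$ count exactly $t_{\f p}$. Assuming the polynomials of $T$ have no common non-constant divisor in $K[x_{1,1},\ldots,x_{k,m}]$, Corollary~\ref{Main} provides a constant $c>0$ with $|W_M(T)\cap R(N)^{km}|\le c(2N+1)^{dkm}/M$ for all $N>M\ge 1$. The inclusion--exclusion/CRT calculation from the proof of Theorem~\ref{main}, applied to an arbitrary finite set $\Phi$ of primes of norm $>M$, then gives
\[
1-\prod_{\f p\in\Phi}\left(1-\frac{t_{\f p}}{\N(\f p)^{mk}}\right)\le\frac{c}{M},
\]
and letting $\Phi$ exhaust all primes of norm $>M$ yields $\prod_{\N(\f p)>M}(1-t_{\f p}/\N(\f p)^{mk})\ge 1-c/M$. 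Combined with the positive finite product over primes of norm $\le M$, taking $M$ large enough produces $\den_k(A)>0$.

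The main obstacle is therefore the codimension claim: the non-generator locus $V(T)\subset\mathbb A^{km}_K$ has no irreducible component of codimension one. Suppose, for contradiction, that $V$ is such a component. For each $i\in\{1,\ldots,k\}$ consider the projection $\pi_i\colon\mathbb A^{km}_K\to\mathbb A^{(k-1)m}_K$ forgetting the $i$-th coordinate block. If $\pi_i(V)$ were dense, a generic point in its image would generate $A\otimes_R K$ as a $K$-algebra, because $k-1\ge r_0$ makes $\Gen_{k-1}(A\otimes_R K,K)$ a non-empty open subset of $\mathbb A^{(k-1)m}_K$; but any $k$-tuple extending a generating $(k-1)$-tuple generates, contradicting that $V$ consists of non-generators. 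Hence $\dim\pi_i(V)\le(k-1)m-1$, and since $\dim V=km-1$ the generic fibre of $\pi_i|_V$ has dimension $m$ and therefore coincides with $\mathbb A^m$. Thus $V=\pi_i(V)\times\mathbb A^m$ is a cylinder in the $x_i$-direction. Because the same argument applies in every coordinate block, $V$ is a cylinder in every direction and must equal $\mathbb A^{km}_K$, a contradiction. The final inequality $k\ge r$ is then immediate from $\den_k(A)>0$, since this forces $\Gen_k(A)\ne\emptyset$.
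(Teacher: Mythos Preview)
Your argument is correct, but it follows a genuinely different path from the paper's proof.

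The paper proceeds quantitatively over each residue field. It first uses the Loomis--Whitney inequality (Lemma~\ref{whitney}) to prove Lemma~\ref{estimate}: if an $m$-dimensional $\mathbb F$-algebra is $r$-generated, then the number $\ng_k$ of non-generating $k$-tuples satisfies $\ng_k\le m^{2k/r}q^{mk-k/r}$. Combined with the openness result (Corollary~\ref{genopen}), which gives $r_{\f p}=r_0$ for all but finitely many $\f p$, this yields
\[
\frac{g_k(\f p,A)}{\N(\f p)^{mk}}\ge 1-\frac{m^{2k/r_0}}{\N(\f p)^{k/r_0}}
\]
for almost all $\f p$; since $k/r_0>1$, the series $\sum_{\f p}\N(\f p)^{-k/r_0}$ converges and the product is positive.

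Your route is geometric: you bypass any finite-field counting and instead show directly that the non-generator locus $V(T)\subset\mathbb A^{km}_K$ has no codimension-one component, which is exactly the hypothesis needed for Corollary~\ref{Main}. The cylinder argument is neat and correct: if $V$ is such a component and $\pi_i(V)$ were dense, then (since $k-1\ge r_0$ makes $\Gen_{k-1}(A\otimes K,K)$ a non-empty Zariski open) some point of $V$ would project to a generating $(k-1)$-tuple and hence itself generate, a contradiction; so $\dim\overline{\pi_i(V)}\le(k-1)m-1$, the containment $V\subseteq\mathbb A^m\times\overline{\pi_i(V)}$ is an equality by comparing dimensions of irreducibles, and iterating over $i$ forces $V=\mathbb A^{km}$. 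One should phrase the component $V$ scheme-theoretically (or pass to $\bar K$) so that ``codimension-one irreducible component'' corresponds precisely to a height-one prime, hence to a principal ideal $(g)$ with $g$ a common factor of the $f_i$; with that understood, the deduction is sound.

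Both proofs exploit the hypothesis $k\ge 1+r_0$ in the same spirit: the paper uses it as $k/r_0>1$ to make a zeta-type series converge, while you use it as $k-1\ge r_0$ to ensure the projected generic tuple already generates. The paper's approach has the advantage of producing an explicit lower bound on each Euler factor (useful for estimating $\den_k(A)$ numerically). Your approach is more conceptual, avoids the Loomis--Whitney detour, and makes transparent that positivity of the density is ultimately a codimension-two statement about the non-generator variety over~$K$.
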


Our proof of Theorem \ref{denslenstra} will use the following
nice result, often called Loomis-Whitney inequality (\cite{whitney})

\begin{nle}\label{whitney}
Let $T$ be a finite set, $D$ a subset of $T^s$, and let $D_i$ be the projection of $D$ to $T^{s-1}$ along
the $i$-th coordinate. Then $|D|^{s-1}\leq \prod_{i=1}^{s} |D_i|$.
\end{nle}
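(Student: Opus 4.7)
The plan is to prove this by induction on $s$, with base case $s=2$ essentially trivial and the inductive step combining a slicing argument with H\"older's inequality. For $s=2$, any $(x_1,x_2)\in D$ satisfies $x_1\in D_2$ and $x_2\in D_1$, so $D\subseteq D_2\times D_1$ and the claim $|D|\le |D_1||D_2|$ is immediate.

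For the inductive step assume $s\ge 3$ and the inequality for $s-1$. For each $t\in T$ consider the slice
\[ D(t)=\{(x_2,\ldots,x_s)\in T^{s-1}:(t,x_2,\ldots,x_s)\in D\},\]
and for each $i\ge 2$ let $D_i(t)\subseteq T^{s-2}$ be the projection of $D(t)$ along its $(i-1)$-st coordinate. Two simple observations drive everything: first, $D(t)\subseteq D_1$, since the $t$-th slice of $D$ maps into the single set obtained by deleting the first coordinate; second, for each $i\ge 2$ the sets $D_i(t)$ (as $t$ varies) are precisely the $t$-slices of $D_i$, so
\[ \sum_{t\in T}|D_i(t)|\le |D_i|.\]

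Now apply the inductive hypothesis to $D(t)\subseteq T^{s-1}$ to obtain $|D(t)|^{s-2}\le \prod_{i=2}^{s}|D_i(t)|$. Combining this with the cruder bound $|D(t)|\le |D_1|$ gives
\[ |D(t)|^{s-1}\le |D_1|\prod_{i=2}^{s}|D_i(t)|,\]
whence $|D(t)|\le |D_1|^{1/(s-1)}\prod_{i=2}^{s}|D_i(t)|^{1/(s-1)}$. Summing over $t\in T$ and applying H\"older's inequality to the $s-1$ factors $|D_i(t)|^{1/(s-1)}$ (with equal exponents $s-1$, whose reciprocals sum to $1$) yields
\[ |D|=\sum_{t}|D(t)|\le |D_1|^{1/(s-1)}\prod_{i=2}^{s}\Bigl(\sum_{t}|D_i(t)|\Bigr)^{1/(s-1)}\le \prod_{i=1}^{s}|D_i|^{1/(s-1)}.\]
Raising to the $(s-1)$-st power gives the claim.

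The only real subtlety is book-keeping with the exponents: one has to split $|D(t)|^{s-1}=|D(t)|\cdot |D(t)|^{s-2}$ and use the crude bound on one factor and the inductive bound on the other, in order to arrive at a product of exactly $s-1$ quantities to which H\"older's inequality applies with equal exponents. Once this balancing is set up correctly, the rest of the argument is mechanical.
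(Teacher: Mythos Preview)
Your proof is correct; this is the standard inductive argument for the Loomis--Whitney inequality, combining slicing with H\"older's inequality. The paper does not actually prove this lemma: it simply states it as a known result and cites the original paper of Loomis and Whitney, so there is nothing to compare against beyond noting that your argument is essentially the classical one.
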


As a corollary we get the following lemma.

\begin{nle}\label{estimate} Let $\mathbb F$ be a finite field with $q$ elements.
Suppose that an $m$-dimensional $\mathbb F$-algebra $A$ can be generated by $r$ elements as an
$\mathbb F$-algebra. For $k\geq r$ let
$\ng_k$ be the number of $k$-tuples in $A^k$ which do not generate  $A$ as an $\mathbb F$-algebra.
Then
$\text{\rm ng}_k\leq m^{2k/r}q^{mk-k/r}$ for any $k>r$.
\end{nle}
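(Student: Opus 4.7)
The plan is a short induction on $k$: the base case $k=r$ is handled by a Schwartz--Zippel-style zero count via Lemma~\ref{polynomialsgen}, and the inductive step is a direct application of the Loomis--Whitney inequality (Lemma~\ref{whitney}).

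For the base case I would establish that $\ng_r \le m^2 q^{mr-1}$. Apply Lemma~\ref{polynomialsgen} (with $k$ there replaced by $r$) to produce a finite set $T\subset \mathbb F[x_{1,1},\ldots,x_{r,m}]$ of polynomials of degree at most $m^2$ whose common vanishing locus in $\mathbb F^{mr}$ equals $\ng_r$ under the identification $A^r\cong \mathbb F^{mr}$ given by the chosen basis (over a field, the ideal generated by the values of $T$ is the whole ring iff some value is non-zero). Since $A$ can be generated by $r$ elements, at least one generating $r$-tuple exists, so at least one $f\in T$ is a non-zero polynomial. Lemma~\ref{zeros} then gives $\ng_r \le |Z(f)| \le (\deg f)\, q^{mr-1} \le m^2 q^{mr-1}$, which is exactly the target inequality at $k=r$.

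For the inductive step with $k>r$ I would apply Lemma~\ref{whitney} with $T:=A$, $s:=k$, and $D:=\ng_k\subseteq A^k$. The crucial observation is that every projection $D_i\subseteq A^{k-1}$ equals $\ng_{k-1}$: a non-generating $(k-1)$-tuple $(a_1,\ldots,\widehat{a_i},\ldots,a_k)$ can be extended to a non-generating $k$-tuple by inserting $0$ in position $i$, since adjoining $0$ never enlarges the subalgebra generated; conversely, any element of $D_i$ is itself non-generating because any sub-tuple of a non-generating tuple generates a subalgebra of a proper subalgebra and hence a proper subalgebra. Loomis--Whitney then yields $\ng_k^{k-1} \le \prod_{i=1}^k |D_i| = \ng_{k-1}^{\,k}$, i.e.\ $\ng_k \le \ng_{k-1}^{k/(k-1)}$. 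Substituting the inductive hypothesis $\ng_{k-1}\le m^{2(k-1)/r}q^{m(k-1)-(k-1)/r}$ (valid for $k-1\ge r$, with the case $k=r+1$ coming directly from the base case via the same calculation) and simplifying via $\tfrac{2(k-1)}{r}\cdot\tfrac{k}{k-1} = \tfrac{2k}{r}$ and $\bigl(m(k-1)-\tfrac{k-1}{r}\bigr)\cdot\tfrac{k}{k-1} = mk-\tfrac{k}{r}$ produces the desired bound $\ng_k \le m^{2k/r}q^{mk-k/r}$.

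The only step carrying real content is the base-case estimate on $\ng_r$, since it is what converts the multiplicative Loomis--Whitney recursion into an absolute bound; once Lemma~\ref{polynomialsgen} hands us a single non-zero polynomial of degree $\le m^2$ whose zero set contains $\ng_r$, the rest of the argument is bookkeeping and I foresee no serious obstacle.
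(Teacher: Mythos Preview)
Your argument is correct and mirrors the paper's proof: both establish the base bound $\ng_r\le m^2 q^{mr-1}$ via Lemma~\ref{polynomialsgen} and Lemma~\ref{zeros}, then use Loomis--Whitney (Lemma~\ref{whitney}) to obtain $\ng_k^{k-1}\le \ng_{k-1}^k$. The only cosmetic difference is that the paper telescopes this recursion directly to $\ng_k\le \ng_r^{k/r}$ rather than inducting step by step, and it only needs the containment $D_i\subseteq D(k-1)$ rather than the equality you (correctly) prove.
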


\begin{proof} Let $D(k)\subseteq A^k$
be the set of all $k$-tuples which do not generate $A$ as an $\mathbb F$-algebra.
For each $i$ the projection
$D(k)_i\subseteq A^{k-1}$ of $D(k)$ along the $i$-th coordinate is contained in $D(k-1)$.
By the Loomis-Whitney inequality (Lemma \ref{whitney}) we have
\[ \ng_k^{k-1}\leq \ng_{k-1}^k.\]
A straightforward induction yields now the inequality
\[\ng_k\leq \ng_{r}^{k/r}.\]
The set $D(r)$
is contained in the set of all zeros of some non-zero polynomial of degree $\leq m^2$ in $rm$ variables
by Lemma \ref{polynomialsgen}. It follows that $|D(r)|=\ng_r\leq m^2q^{mr-1}$ by Lemma \ref{zeros}.
Consequently,
\[ \ng_k\leq \ng_r^{k/r} \leq m^{2k/r} q^{km-k/r}.\]
\end{proof}

\begin{proof}[{\rm \bf Proof of Theorem \ref{denslenstra}}]
Recall that by Theorem \ref{gendensity} we have
\[ \den_k(A)=\prod_{\f p\in \mspec R}\frac{g_k(\f p, A)}{\N(\f p)^{mk}}.\]
Since $k\geq r_{\f p}$, we see that $g_k(\f p,A)>0$
for all maximal ideals  $\f p$. It suffices therefore to show that
$\displaystyle \prod \frac{g_k(\f p, A)}{\N(\f p)^{mk}}>0$, where
the product is over all maximal ideals with sufficiently large norm.
Since the set of all prime ideals $\f p$ of $R$ such that
$r_{\f p}=r_0$ is open and contains the zero ideal, we have $r_{\f p}=r_0$ for all but a finite number
of maximal ideals $\f p$. Let $m$ be the rank of the free $R$-module $A$.
Since $k\geq r_0+1$, Lemma \ref{estimate} implies that
$g_k(\f p,A)\geq \N(\f p)^{km}- m^{2k/r_0}\N(\f p)^{km-k/r_0}$ for every
$\f p\in \mspec R$ such that $r_0=r_{\f p}$.
It follows that
\[\frac{g_k(\f p, A)}{\N(\f p)^{mk}}\geq 1-\frac{m^{2k/r_0}}{\N(\f p)^{k/r_0}}\]
for all but a finite number of maximal ideals $\f p$.
It suffices therefore to show that $\displaystyle \prod \left(1-\frac{m^{2k/r_0}}{\N(\f p)^{k/r_0}}\right)>0$, where
the product is over all maximal ideals with sufficiently large norm. This in turn is equivalent to
showing that the series $\displaystyle \sum_{\f p\in\mspec R} \frac{m^{2k/r_0}}{\N(\f p)^{k/r_0}}$ converges, which is
indeed true since $k/r_0>1$.
\end{proof}

As an immediate corollary of Theorem \ref{denslenstra} we get the following

\begin{te}\label{lenstra}
Let $R$ be an order in a number field $K$ and let $A$ be an $R$-algebra which is free as an $R$-module.
If $r_0< r_{\f p}$ for some maximal ideal $\f p$ of $R$ then $r=\max\{ r_{\f p}: \f p\in \mspec R\}$.
If $r_0=r_{\f p}$ for all maximal ideals $\f p$ then $r_0\leq r\leq 1+r_0$.
\end{te}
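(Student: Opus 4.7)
The plan is to deduce this as an essentially immediate corollary of Theorem~\ref{denslenstra}, once we pair it with the trivial lower bounds on $r$. First I would record the two easy inequalities: (i) $r\geq r_0$, because any $k$-tuple generating $A$ as an $R$-algebra yields, after tensoring with $K$, a $k$-tuple generating $A\otimes_R K$ as a $K$-algebra; and (ii) $r\geq r_{\f p}$ for every maximal ideal $\f p$, by Lemma~\ref{localgen} (reducing mod $\f p$). Setting $r_f:=\max\{r_{\f p}:\f p\in\mspec R\}$ (note $r_f\leq r<\infty$ since $A$ is finitely generated as an $R$-module, so the maximum exists), we obtain $r\geq\max(r_0,r_f)$.

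Next I would split into the two cases of the statement. In the first case, $r_0<r_{\f p}$ for some maximal $\f p$, so $r_f\geq r_0+1$. Apply Theorem~\ref{denslenstra} with $k=r_f$: by the definition of $r_f$ we have $k\geq r_{\f q}$ for every maximal ideal $\f q$, and $k=r_f\geq r_0+1$, so the hypotheses are satisfied. The conclusion gives $\den_{r_f}(A)>0$, hence in particular $\Gen_{r_f}(A,R)\neq\emptyset$, so $r\leq r_f$. Combined with the lower bound $r\geq r_f$ this yields $r=r_f$.

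In the second case, $r_0=r_{\f p}$ for every maximal $\f p$, so $r_f=r_0$, which already gives $r\geq r_0$. Applying Theorem~\ref{denslenstra} with $k=r_0+1$, both conditions $k\geq r_{\f p}$ (which here is $r_0+1\geq r_0$) and $k\geq r_0+1$ hold trivially, so $\den_{r_0+1}(A)>0$. In particular there exists a generating $(r_0+1)$-tuple, hence $r\leq r_0+1$, giving $r_0\leq r\leq r_0+1$ as claimed.

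There is really no obstacle here beyond careful bookkeeping: all the work has already been done in Theorem~\ref{denslenstra}. The only thing worth double-checking is that $r_f$ is indeed attained (so that one can sensibly apply Theorem~\ref{denslenstra} with $k=r_f$); this follows because $r_{\f p}\leq r$ for every $\f p$ and $r$ is finite, so the set $\{r_{\f p}:\f p\in\mspec R\}$ is a bounded set of non-negative integers and therefore has a maximum.
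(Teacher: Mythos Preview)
Your proof is correct and follows exactly the approach the paper intends: the text says Theorem~\ref{lenstra} is ``an immediate corollary of Theorem~\ref{denslenstra}'' without giving further details, and you have simply written out those details. The lower bounds $r\geq r_0$ and $r\geq r_{\f p}$, the case split, and the two applications of Theorem~\ref{denslenstra} (with $k=r_f$ and $k=r_0+1$ respectively) are precisely what is required.
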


A special case of Theorem \ref{lenstra}, when $R=\mathbb Z$ was communicated to us by
H.W. Lenstra \cite{lenstra}. Lenstra's proof of this result is purely algebraic and it does not
provide any way to handle the
ambiguity for $r$ when $r_0=r_{\f p}$ for all maximal ideals $\f p$. It is known that in this case
both $r=r_0$ and $r=r_0+1$ is possible. For example, there are infinitely many number fields
in which the ring of integers $A$ considered as a $\mathbb Z$-algebra has $r_{\f p}=1$ for all prime
ideals $\f p$ but $r=2$. As an explicit example one can take the ring of integers in the
cubic field $\mathbb Q(\sqrt[3]{198})$ (\cite[page 167]{pleasant}).
Later, we will see examples where
$\den_{r_0}(A)>0$, hence $r=r_0$, even though we are unable to find generators.

\begin{question}
{\rm Let $R$ be an order in a number field. Suppose that $A$ is an $R$-algebra which is finitely generated
and projective as an $R$-module. The right hand side of the formula in Theorem \ref{gendensity} makes perfect
sense for $A$ and we will continue to denote it by $\den_k A$. Is it true that if $\den_k A>0$ then
$A$ can be generated by $k$ elements as an $R$-algebra ? We believe that the answer is positive.
Perhaps there is a notion of density in this case which makes Theorem \ref{gendensity} valid?
}
\end{question}

We have the following generalization of the original result of Lenstra.

\begin{te}\label{swan}
Let $R$ be a commutative ring of dimension $\leq 1$ such that $\mspec R$ is Noetherian and let $A$ be an
$R$-algebra finitely generated as an $R$-module. Let $h$ be the smallest non-negative integer such that
$h\geq r_{\f p}$ for all but a finite number of maximal ideals $\f p$ of $R$.
Suppose that $k\geq r_{\f p}$ for all maximal ideals $\f p$ and $k\geq 1+h$.
Then $A$ can be generated by $k$ elements as an $R$-algebra.
\end{te}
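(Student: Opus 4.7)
The plan is to adapt Lenstra's approach (proof of Theorem~\ref{lenstra}) to the more general setting of Theorem~\ref{swan}, replacing the density arguments available only for orders in number fields with a direct Chinese Remainder Theorem construction. The workhorses will be Corollary~\ref{genopen} (openness of the local generation condition), Lemma~\ref{localgen} (passage from local generation at every maximal ideal to global generation), and the Noetherian/one-dimensionality hypotheses on $R$.

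First I would reduce to a finite local problem. Let $\Sigma \subseteq \mspec R$ denote the set of maximal ideals where $r_{\f p} > h$; by hypothesis $\Sigma$ is finite and $r_{\f p} \leq k$ for each $\f p \in \Sigma$. At each $\f p \in \Sigma$ choose a generating $k$-tuple $(g_1^{\f p},\dots,g_k^{\f p})$ of the finite $\mathbb F_{\f p}$-algebra $A/\f p A$. Using the Chinese Remainder Theorem for the pairwise comaximal ideals $\{\f p A\}_{\f p \in \Sigma}$, find elements $a_1,\dots,a_k \in A$ with $a_i \equiv g_i^{\f p} \pmod{\f p A}$ for each $\f p \in \Sigma$ and each $i$. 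By construction the tuple $(a_1,\dots,a_k)$ generates $A$ at every $\f p \in \Sigma$, and any translate $(a_1+c_1,\dots,a_k+c_k)$ with $c_i \in JA$ (where $J=\bigcap_{\f p \in \Sigma}\f p$) continues to do so; this residual freedom must be used to secure generation at the remaining maximal ideals.

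Next, using $r_{\f p}\leq h\leq k-1$ for every $\f p \notin \Sigma$, the set of primes where a chosen tuple fails to locally generate is closed in $\spec R$ by Corollary~\ref{genopen}; combining this with $\dim R\leq 1$ and the Noetherianness of $\mspec R$, the maximal ideals outside $\Sigma$ at which $(a_1,\dots,a_k)$ fail to generate form a \emph{finite} subset $\Sigma^\flat$. Augmenting $\Sigma$ by $\Sigma^\flat$ (noting $r_{\f p}\leq h\leq k$ for $\f p \in \Sigma^\flat$) and iterating the CRT step with the additional slack provided by $k\geq 1+h$, we obtain, after a stable finite number of rounds, a tuple generating $A$ at every maximal ideal. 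Lemma~\ref{localgen} then upgrades this to generation of $A$ as an $R$-algebra.

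The main obstacle will be making the CRT adjustment at the finite bad set $\Sigma\cup\Sigma^\flat$ \emph{without} destroying generation at the (infinitely many) good primes. The condition $k\geq 1+h$ is exactly what is needed: at each good prime only $h$ of the $a_i$'s are essential for generation, so an extra slot is available to absorb the CRT modifications lying in $JA$ without spoiling the local generating property elsewhere. The delicate point is to argue that a single coherent choice of the correction terms $c_i \in JA$ simultaneously enforces generation at every prime of the (finite, by the topological reduction above) updated bad set; this will follow from a prime-avoidance style selection, applied to the polynomial conditions produced by Lemma~\ref{polynomialsgen} (or, without freeness, by Lemma~\ref{newspan}) describing the generation locus.
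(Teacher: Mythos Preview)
Your outline has a genuine gap at the step where you claim that the set $\Sigma^\flat$ of maximal ideals outside $\Sigma$ at which $(a_1,\ldots,a_k)$ fails to generate is \emph{finite}. The closedness of the non-generating locus (which is Lemma~\ref{open}, not Corollary~\ref{genopen}) together with $\dim R\leq 1$ and Noetherianness of $\mspec R$ does not yield this. A closed subset of the Noetherian space $\mspec R$ need only have finitely many irreducible components, but each component can be infinite. Concretely, take $R=\mathbb Z$ and $A=\mathbb Z[x]/(x^2)$: then $r_{\f p}=1$ for every prime, so $h=1$, $\Sigma=\emptyset$, and $k=2$ suffices. Your first step imposes no condition at all, so one may choose $a_1=a_2=0$; then $\Sigma^\flat=\mspec\mathbb Z$ is infinite and the CRT iteration never gets started. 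More generally, nothing in your construction prevents the tuple from failing to generate at a minimal prime of $R$, and once that happens the failure propagates to every maximal ideal above it.

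The missing idea, which the paper supplies, is to force generation at the \emph{generic points} of the infinite irreducible components of $\mspec R$. The paper does this by selecting, for each such generic prime $\f p$, a maximal ideal $\f q\supseteq\f p$ with $r_{\f q}=r_{\f p}\leq h$ (this is where Corollary~\ref{genopen} is actually used), collecting these $\f q$ into a finite set $M$, and then building the sequence one element at a time so that at every stage it is ``$M$-generic'' (hence, by Lemma~\ref{open}, generates at all but finitely many maximal ideals) \emph{and} extendable to a $k$-tuple generating at every maximal ideal. The condition $k\geq h+1$ is used inside this induction, not as a single spare slot at the end: at step $i\to i+1$ one must choose $b_{i+1}$ to satisfy finitely many congruences (coming from $M$ and from the finitely many bad primes at that stage) simultaneously, and the inequality guarantees that such a choice is possible at each bad prime. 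Your sketch collapses this induction into one CRT step and a vague ``iteration'', and that is precisely where it breaks.
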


\begin{proof}
Since $\mspec R$ is Noetherian, it has a finite number of irreducible components. Note that if an irreducible
component
of $\mspec R$ is finite then it consists of a single maximal ideal. Otherwise it contains infinitely
many maximal ideals and the intersection of all these ideals is a prime ideal which we call the generic ideal
of the component. Let $T$ be the set of all prime ideals which are generic ideals of some infinite irreducible
component of $\mspec R$. Thus $T$ is a finite set of minimal prime ideals of $R$ (it can be empty).
Note that if $\f p\in T$
then $r_{\f p}\leq r_{\f q}$ for any maximal ideal $\f q$ containing $\f p$ and the equality holds
for all but a finite number of such maximal ideals by Corollary \ref{genopen}.
It follows that  $h=\max\{r_{\f p}:\f p\in T\}$.
For each prime $\f p \in T$ choose a maximal ideal $\f q\supseteq \f p$ such that $r_{\f p}=r_{\f q}$
and denote this set of chosen maximal ideals by $M$.

We call a sequence $a_1,\ldots,a_m$ of elements of $A$ {\bf $M$-generic} if it generates $A$ at $\f p$
for every $\f p\in M$. Note that an $M$-generic sequence generates $A$ at $\f p$
for all but a finite number of maximal ideals $\f p$.
We claim that there is an $M$-generic sequence of length $h$.
Indeed, for each $\f q \in M$ there are h elements in $A$ which generate $A$ at $\f q$.
By the Chinese Remainder Theorem for modules, we may find elements
$a_1,\ldots,a_h$ in $A$ which generate $A$ at $\f q$ for all $\f q\in M$. Thus $a_1,\ldots,a_h$ is $M$-generic.

We will now show that for every $i\leq h$ there is an $M$-generic sequence $b_1,\ldots,b_h$ such that
for every maximal ideal $\f q$ the elements $b_1,\ldots,b_i$ can be
completed to a set of $k$ elements which generate $A$ at $\f q$. Our argument is by induction on $i$. It is
clearly true for $i=0$
(any $M$-generic sequence of length $h$ works). Suppose that $b_1,\ldots,b_h$ is a generic sequence
which works for some $i$. We seek a generic sequence working for $i+1$ which is of the form
$b_1,\ldots,b_i, b,b_{i+2},\ldots,b_h$ for some $b\in A$. Note that if $b$ is such that $b-b_{i+1}\in \f q A$
for all $\f q\in M$ then $b_1,\ldots,b_i, b,b_{i+2},\ldots,b_h$ is $M$-generic. Also, there is a finite
set $W$ of maximal ideals, disjoint from $M$, such that for any maximal ideal $\f q\not\in W$
and any $b\in A$, the sequence $b_1,\ldots,b_i,b,b_{i+1},\ldots,b_h$ generates $A$ at $\f q$.
Since $k>h$, for any $\f q\not \in W$ and any $b\in A$, the elements $b_1,\ldots,b_i,b$ can be
completed to a set of $k$ elements which generate $A$ at $\f q$. So in our choice of $b$
we only need to worry about maximal ideals in $W$.
For every $\f q\in W$ there is $b_{\f q}\in A$
such that $b_1,\ldots,b_i, b_{\f q}$ extends to a set of $k$ elements which generate $A$ at $\f q$.
By the Chinese Remainder Theorem for modules, we may choose $b\in A$
such that $b-b_{i+1}\in \f q A$ for all $\f q\in M$ and $b-b_{\f q}\in \f q A$ for all $\f q\in W$.
For any such $b$ the sequence $b_1,\ldots,b_i, b,b_{i+2},\ldots,b_h$ has the required properties for $i+1$.

Let $a_1,\ldots,a_h$ be an $M$-generic sequence good for $i=h$. Thus, for any maximal ideal $\f q$ outside
some finite set $U$ the elements $a_1,\ldots,a_h$ generate $A$ at $\f q$. For each $\f q\in U$, there
are elements $a_{h+1}(\f q),\ldots,a_{k}(\f q)$ in $A$ such that $a_1,\ldots,a_h,a_{h+1}(\f q),\ldots,a_{k}(\f q) $
generate $A$ at $\f q$. By the Chinese Remainder Theorem for modules, there are elements $a_{h+1},\ldots,a_k$
in $A$ such that $a_i-a_i(\f q)\in \f qA$ for all $\f q\in U$ and all $i=h+1,\ldots,k$. Thus
the elements $a_1,\ldots,a_k$ generate $A$ at $\f q$ for every maximal ideal $\f q$, hence they generate
$A$ as an $R$-algebra by Lemma \ref{localgen}.
\end{proof}

The reader familiar with the results of Forster and Swan on the number of generators of modules
over Noetherian commutative rings should recognize the similarities between Theorem \ref{swan}
and Swan's Theorem \cite[Theorem 5.8]{matsumura}. Unlike the result of Swan, Theorem \ref{swan}
only treats the case of rings of dimension $\leq 1$. So far we have not been able to get similar results
for rings of higher dimension but we believe that the following conjectural generalization should be true.
In order to state it we need to recall briefly some notions
(see \cite[pages 35-37]{matsumura} for more details). We denote by $\jspec R$ the subspace of $\spec R$
which consists of those prime  ideals which are intersections of some set of maximal ideals of $R$.
We assume that $\mspec R$ is a Noetherian space. It turns out that this is equivalent to $\jspec R$
being Noetherian and then both spaces have the same combinatorial dimension. When $\f p \in \jspec R$,
we write $\text{j-dim~} \f p$ for the combinatorial dimension of the closure of $\{\f p\}$ in $\jspec R$.
For $\f p \in \jspec R$ define
\[ b(\f p,A)=\begin{cases} 0 & \text{if $A_{\f p}=0$} \\
\text{j-dim~} \f p +r_{\f p}( A) & \text{if $A_{\f p}\neq 0$}
\end{cases}\]

\begin{conjecture}\label{swan1}
Suppose that $R$ is a commutative ring such that {\rm $\mspec R$} is a Noetherian space. Let $A$
be an $R$-algebra finitely generated as an $R$-module. If {\rm
\[ \sup\{b(\f p,A):\f p \in \mspec R\}=n<\infty\]
}
then $A$ can be generated as an $R$-algebra by $n$ elements.
\end{conjecture}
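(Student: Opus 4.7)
The plan is to adapt Swan's proof of the Forster--Swan theorem (\cite[Theorem 5.8]{matsumura}) from modules to algebras, building the generators $a_1,\ldots,a_n$ iteratively by a basic-element style argument. First I would reduce to the Jacobson case by replacing $R$ with $R/\bigcap_{\f m\in\mspec R}\f m$; this changes neither the numbers $r_{\f p}$ for $\f p\in\mspec R$ nor, by Lemma~\ref{localgen}, the generation question for $A$ as an $R$-algebra, and it ensures that j-dimension coincides with the Krull dimension of $R/\f p$ on the primes of interest. Since $\mspec R$ is Noetherian so is $\jspec R$, and every closed subset of $\jspec R$ has only finitely many minimal primes, which provides the finiteness needed to apply the Chinese Remainder Theorem at each stage.

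Next I would maintain by induction on $i$ a sequence $a_1,\ldots,a_i\in A$ such that the bad set
\[ Z_i = \{\f p\in\jspec R : a_1,\ldots,a_i \text{ do not generate } A \text{ at } \f p\}, \]
which is closed by Lemma~\ref{open}, has all of its minimal primes of j-dimension at most $n-i-1$. The hypothesis $b(\f p,A)\le n$ is arranged precisely so this is achievable: at a prime $\f p$ of j-dimension $d$ with $A_{\f p}\neq 0$ we have $r_{\f p}\le n-d$, so after $n-d$ well-chosen elements $\f p$ should no longer appear in the bad set. Initially $Z_0=\{\f p:r_{\f p}\geq 1\}$ has minimal primes of j-dimension $\le n-1$ by the budget, and after at most $n$ inductive steps the minimal primes of $Z_n$ would have negative j-dimension, forcing $Z_n=\emptyset$; Lemma~\ref{localgen} then implies that $a_1,\ldots,a_n$ generate $A$ globally. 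In the inductive step from $i-1$ to $i$, at each of the finitely many maximal j-dimensional minimal primes $\f p_1,\ldots,\f p_t$ of $Z_{i-1}$ the bound $r_{\f p_j}\le i$ furnishes an element $c_j\in A_{\f p_j}$ such that $a_1,\ldots,a_{i-1},c_j$ generate $A_{\f p_j}$ as an $R_{\f p_j}$-algebra, and one would assemble a global $a_i\in A$ out of these local data by the Chinese Remainder Theorem applied to the ideals $\f p_j A$.

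The main obstacle lies in making this assembly genuinely work: the chosen $a_i$ must both remove each $\f p_j$ from the bad set and not spoil generation anywhere in the complement of $Z_{i-1}$, including not allowing some non-minimal prime of $Z_{i-1}$ to become minimal in $Z_i$ at a j-dimension exceeding $n-i-1$. By Lemma~\ref{polynomialsgen}, the open locus where a partial sequence generates $A$ is cut out by the nonvanishing of $m\times m$ minors of the polynomial matrix $\mathcal M$, which is a system of nonlinear polynomial conditions of degree up to $m^2$ in the coefficients of the candidate generator. In the module version of the Forster--Swan argument the corresponding conditions are linear, and one can shift a candidate element along the module to satisfy all the open conditions simultaneously. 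Replicating that maneuver here requires a careful prime-avoidance argument applied to this collection of minors uniformly over all strata of $\jspec R$, and it is precisely this nonlinear basic-element step that we have been unable to push through in full generality; this is why we state the result only as a conjecture.
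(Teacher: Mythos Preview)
Your proposal is well-aligned with the paper's own stance: this statement is a \emph{conjecture} in the paper, not a theorem, and the paper offers no proof. The authors explicitly write that they ``have not been able to get similar results for rings of higher dimension'' and propose Conjecture~\ref{swan1} by analogy with the Forster--Swan theorem. So there is no proof in the paper to compare against.

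Your sketch of the intended approach---reduce to the Jacobson case, iteratively choose generators so that the closed bad locus $Z_i$ drops in j-dimension at each step, using the finiteness of minimal primes and the Chinese Remainder Theorem---is exactly the natural adaptation of Swan's argument, and it is implicitly what the paper has in mind given the analogy they draw. You also correctly identify the genuine obstruction: in the module case the ``basic element'' step is governed by linear conditions, whereas here Lemma~\ref{polynomialsgen} shows the generation locus is cut out by high-degree minors, and the prime-avoidance maneuver needed to ensure the new $a_i$ does not enlarge the bad set elsewhere has no obvious nonlinear analogue. Your concluding sentence, that this is why the result is stated only as a conjecture, is precisely the situation in the paper.
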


\section{Generators of matrix algebras over finite fields}\label{s6}
It is clear from the results of Section \ref{s3} that the key step towards understanding the smallest number
of generators of an algebra over a commutative ring is to handle the case of algebras over fields.
Among the finite dimensional algebras over fields the best understood class is the class of separable
algebras. It was proved in \cite{mmbp} that any separable algebra over an infinite field is two generated.
This is no longer true over finite fields. In this case, separable algebras coincide with finite products
of matrix algebras.

By Proposition \ref{matrixgen}, understanding the structure of generators of a semisimple $F$-algebra
reduces to algebras of the form $A^m$, where $A$ is a simple $F$-algebra. We have the following result:

\begin{theorem}\label{simple}
Let $F$ be a field, let $A$ be a finite dimensional simple $F$-algebra, and let $k,m,n$ be positive integers.
Then $k$ elements of $A^m$
\[a_1=(a_{11}, \ldots,  a_{1m}), \ldots  ,a_k=(a_{k1}, \ldots,  a_{km})\]
generate $A^m$ as an $F$-algebra if and only if the following
two conditions are satisfied:
\begin{enumerate}[\rm (1)]
\item\label{it3} For any $i = 1, \ldots, m$, the elements $a_{1i}, \ldots, a_{ki}$ generate $A$ as an
$F$-algebra.
\item\label{it4}  There does not exist a pair of different indices  $i,j$ for which there is an
automorphism $\Psi$ of the $F$-algebra $A$ such that
\[a_{1i} = \Psi( a_{1j}), \ldots,  a_{ki} = \Psi(a_{kj}).\]
\end{enumerate}
\end{theorem}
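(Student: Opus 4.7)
The plan is to verify the two directions separately. For necessity, condition (\ref{it3}) is immediate, since if $B\subseteq A^m$ is the subalgebra generated by $a_1,\ldots,a_k$ then the projection of $B$ to each factor $A$ must be surjective. For condition (\ref{it4}), if an $F$-algebra automorphism $\Psi$ of $A$ satisfied $a_{\ell i}=\Psi(a_{\ell j})$ for all $\ell$ (with $i\neq j$), then the two-coordinate projection $\pi_{ij}\colon A^m\to A\times A$ would send every generator into the graph $\Gamma_\Psi=\{(\Psi(a),a):a\in A\}$. Since $\Psi$ is a unital algebra homomorphism, $\Gamma_\Psi$ is an $F$-subalgebra of $A\times A$, and it is proper because it does not contain $(1,0)$. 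Thus $\pi_{ij}(B)\subseteq\Gamma_\Psi\subsetneq A\times A$, so $B\neq A^m$.

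For sufficiency I will induct on $m$. The base $m=1$ is precisely condition (\ref{it3}). For the step, observe that conditions (\ref{it3}) and (\ref{it4}) are inherited by the projection of the generators onto any subset of coordinates, so the inductive hypothesis applied to the first $m-1$ coordinates guarantees that the projection $B'\subseteq A^{m-1}$ of $B$ equals $A^{m-1}$. Let $K$ be the kernel of $\pi_{1,\ldots,m-1}\colon B\to A^{m-1}$; then $K\subseteq\{0\}^{m-1}\times A$ and is determined by a subset $J\subseteq A$. Using that $\pi_m\colon B\to A$ is surjective (by (\ref{it3})), any $a\in A$ lifts to some $b\in B$, and multiplying an element $(0,\ldots,0,c)\in K$ by $b$ on either side shows that $aJ,Ja\subseteq J$; hence $J$ is a two-sided ideal of $A$. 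Since $A$ is simple, $J=0$ or $J=A$.

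If $J=A$, then $B$ contains $\{0\}^{m-1}\times A$ and projects onto $A^{m-1}$, hence $B=A^m$. Otherwise $K=0$, so $\pi_{1,\ldots,m-1}\colon B\to A^{m-1}$ is an $F$-algebra isomorphism, and $\pi_m$ factors through it as a surjective $F$-algebra homomorphism $\phi\colon A^{m-1}\to A$. The ideals of $A^{m-1}$ are precisely the products $\prod_{j=1}^{m-1} I_j$ with each $I_j\in\{0,A\}$; since $A^{m-1}/\ker\phi\cong A$ is simple, exactly one $I_j$ equals $0$, say at index $j$. Therefore $\phi(b_1,\ldots,b_{m-1})=\Psi(b_j)$ for some $F$-algebra automorphism $\Psi$ of $A$, and evaluating on each generator gives $a_{\ell m}=\Psi(a_{\ell j})$ for all $\ell$, contradicting (\ref{it4}) for the pair $(j,m)$. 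The main technical point is the classification of $\phi$ via the ideal structure of $A^{m-1}$ forcing $\phi$ to be the composition of a coordinate projection with an automorphism; the rest is Goursat-style bookkeeping with kernels that reduces the multi-factor case to a statement about two factors.
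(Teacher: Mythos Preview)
Your proof is correct, and it takes a genuinely different route from the paper's. The paper proves only the sufficiency direction (leaving necessity implicit) and does so via module theory: it pulls back the unique simple $A$-module $M$ along each projection $\pi_i\colon B\to A$ to get simple $B$-modules $M_i$, shows that condition~(\ref{it4}) forces the $M_i$ to be pairwise non-isomorphic, and then invokes Wedderburn--Artin on the faithful semisimple $B$-module $\bigoplus_i M_i$ to conclude $B\cong A^m$ by a dimension count. Your argument is more hands-on: you prove necessity explicitly (the graph-of-automorphism subalgebra is a nice touch), and for sufficiency you induct on $m$, analysing the kernel $K=\{0\}^{m-1}\times J$ of the projection $B\to A^{m-1}$ and using simplicity of $A$ directly to force $J\in\{0,A\}$, then classifying the resulting surjection $A^{m-1}\to A$ via the ideal structure of a product of simple rings. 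Your approach avoids Wedderburn--Artin entirely and is essentially a Goursat-lemma argument; it is more elementary and arguably more transparent for this statement. The paper's approach, on the other hand, is more conceptual and would adapt more readily if one wanted to relax the hypothesis that $A$ is simple.
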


\begin{proof}
Let $B$ denote the subalgebra of $A^m$ generated by $a_1,\dots,a_k$.
Recall that there is a unique up to isomorphism simple $A$-module $M$ and it is faithful.
Let $M_i$ be the pull-back of $M$ via the projection $\pi_i:B\longrightarrow A$ on the $i$-th coordinate.
Thus $M_i$ is a $B$-module which coincides with $M$ as an $F$-vector space and for $b\in B$ and
$m\in M_i=M$ we have $bm=\pi_i(b)m$.
Since $\pi_i$ is surjective by (\ref{it3}), each $M_i$ is a simple $B$ module. We claim that these $B$-modules
are pairwise non-isomorphic. Indeed,
suppose that for some
$i\neq j$ the $B$-modules $M_i$ and $M_j$ are isomorphic and let $\Phi: M_i\longrightarrow M_j$ be an
isomorphism of these $B$-modules. For any $a\in A$ there is $b\in B$ such that $\pi_i(b)=a$. Set
$\Psi(a)=\pi_j(b)$. We claim that $\Psi$ is well defined and it is an automorphism of the $F$-algebra $A$.
Indeed, if $b_1\in B$ is another element such that $\pi_i(b_1)=a$ then for any $m\in M_i$ we have
$bm=b_1m$. Applying $\Phi$ to this equality, we see that $b\Phi(m)=b_1\Phi(m)$ for any $m\in M_i$.
Since $\Phi$ is an isomorphism, we conclude that $bn=b_1n$ for any $n\in M_j$, i.e. $\pi_j(b)m=
\pi_j(b_1)m$ for every $m\in M$.
Since $M$ is a faithful $A$-module, we conclude that $\pi_j(b)=\pi_j(b_1)$. This shows that $\Psi$ is
well defined. It is now straightforward to see that $\Psi$ respects addition and multiplication and it is
$F$-linear. It follows that $\Psi$ is an isomorphism of $F$-algebras. This however is in contradiction
with our assumption (\ref{it4}). It follows that $M_i$ and $M_j$ are not isomorphic as $B$-modules for
$i\neq j$.
Note that $\oplus_{i=1}^{m}M_i$ is a semisimple, faithful $B$-module. It follows that $B$ is
semisimple and every simple $B$-module is isomorphic to one of the $M_i$'s. By the Wedderburn-Artin theory,
$B$ is isomorphic to the product $\prod_{i=1}^m B_i$, where $B_i=\M_{n_i}(D_i)$, $D_i=\End_B(M_i)$, and
$n_i\dim_F(D_i)=\dim_F(M_i)=\dim_FM$. Note that $D_i=\End_B(M_i)=\End_A(M)$ and therefore
$A$ is isomorphic to $B_i$ for each $i$, again by the Wedderburn-Artin theory. This proves that $\dim_F A^m=
\dim_F B$, and consequently $A^m=B$.
\end{proof}

As a simple corollary we get the following

\begin{pro}\label{orbits}
Let $A$ be a simple finite dimensional algebra over a field $F$. For any $k>0$ the group $\Aut_F(A)$
of $F$-algebra automorphisms of $A$ acts freely on the set $\Gen_k(A,F)$. The algebra $A^m$ can be generated
by $k$ elements as an $F$-algebra if and only if there are at lest $m$ different orbits of the action of $\Aut_F(A)$
on $\Gen_k(A,F)$.
\end{pro}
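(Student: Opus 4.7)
The plan is to derive both assertions directly from Theorem~\ref{simple}. For the freeness of the $\Aut_F(A)$-action, I would argue as follows: suppose $\Psi \in \Aut_F(A)$ satisfies $\Psi(a_i) = a_i$ for every $i$, where $(a_1,\ldots,a_k) \in \Gen_k(A,F)$. Then $\Psi$ acts as the identity on the $F$-subalgebra of $A$ generated by $a_1,\ldots,a_k$, and by hypothesis this subalgebra is all of $A$. Hence $\Psi = \mathrm{id}_A$, so the stabilizer of every generating tuple is trivial and the action is free.

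For the second assertion, I would write a $k$-tuple of elements of $A^m$ as $a_i = (a_{i1}, \ldots, a_{im})$ for $i = 1,\ldots,k$, and associate to it the $m$ ``column tuples'' $c_j := (a_{1j}, \ldots, a_{kj}) \in A^k$ for $j=1,\ldots,m$. Theorem~\ref{simple} says that $(a_1,\ldots,a_k)$ generates $A^m$ as an $F$-algebra if and only if (i) each $c_j$ lies in $\Gen_k(A,F)$, and (ii) for no pair $i \ne j$ does there exist an automorphism $\Psi \in \Aut_F(A)$ with $\Psi(c_j) = c_i$ (coordinate-wise). Condition (ii) is precisely the statement that the column tuples $c_1, \ldots, c_m$ lie in $m$ pairwise distinct orbits of the $\Aut_F(A)$-action on $\Gen_k(A,F)$.

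Consequently, if $A^m$ is generated by $k$ elements, the associated columns exhibit at least $m$ distinct orbits. Conversely, if there are at least $m$ distinct $\Aut_F(A)$-orbits in $\Gen_k(A,F)$, choose one representative from each of $m$ of them, arrange them as the columns of a $k \times m$ array, and read off the rows as $k$ elements of $A^m$; by Theorem~\ref{simple} these generate $A^m$. The proposition is essentially a repackaging of Theorem~\ref{simple} in orbit language, so there is no substantial obstacle beyond correctly translating conditions~(1) and~(2) of that theorem into the statement about orbits.
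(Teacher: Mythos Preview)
Your proof is correct and follows essentially the same approach as the paper: the freeness argument is identical, and the second assertion is derived by translating conditions~(1) and~(2) of Theorem~\ref{simple} into the statement that the column tuples $c_1,\ldots,c_m$ lie in distinct $\Aut_F(A)$-orbits. If anything, you spell out the two directions of the equivalence more explicitly than the paper does.
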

\begin{proof}
The action of $\Aut_F(A)$ on $\Gen_k(A,F)$ is the restriction of the coordinatewise action of $\Aut_F(A)$
on $A^k$. If $\Psi\in \Aut_F(A)$ fixes an element of $\Gen_k(A,F)$, then it fixes each member of a set of
generators of $A$ as an $F$-algebra, so $\Psi$ is the identity. This explains why the action is free.
Theorem \ref{simple} says that elements $a_1=(a_{11}, \ldots,  a_{1m}), \ldots  $, $
a_k=(a_{k1}, \ldots,  a_{km})$ generate $A^m$ as an $F$-algebra iff the elements $(a_{11}, \ldots,  a_{k1}),
\ldots$, $(a_{1m}, \ldots,  a_{km})$ belong to different orbits of the action of $\Aut_F(A)$
on $\Gen_k(A,F)$.
\end{proof}

Suppose now that $F=\mathbb F_q$ is a finite field with $q$ elements. Then simple finite dimensional
$\mathbb F_q$-algebras are exactly algebras of the form $\M_n(\mathbb F_{q^s})$ for some positive integers $n,s$.
Now, by the Skolem-Noether Theorem, the group of automorphisms of the $\mathbb F_q$-algebra
$\M_n(\mathbb F_{q^s})$ is the semidirect product of the group $\text{PGL}_n(\mathbb F_{q^s})$ and
the Galois group $\text{Gal}(\mathbb F_{q^s}/\mathbb F_{q}$). Thus we get the following

\begin{theorem}\label{formula}
Let $A=\M_n(\mathbb F_{q^s})$ considered as an $\mathbb F_{q}$-algebra. Then $A^m$ can be generated
by $k$ elements as an $\mathbb F_{q}$-algebra iff
\[ m\leq \frac{|\Gen_k(A,\mathbb F_{q})|}{s|\text{\rm PGL}_n(\mathbb F_{q^s})|}.\]
Furthermore,
\[ |\Gen_k(A^m,\mathbb F_{q})|=\prod_{i=0}^{m-1}(|\Gen_k(A,\mathbb F_{q})|-i\cdot s\cdot |\text{\rm PGL}_n(\mathbb F_{q^s})|)\]
\end{theorem}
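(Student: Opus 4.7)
The plan is to apply Proposition~\ref{orbits} together with Theorem~\ref{simple} after identifying the automorphism group and computing orbit sizes. Set $N=|\Gen_k(A,\mathbb F_q)|$ and let $G=\Aut_{\mathbb F_q}(A)$. By the Skolem--Noether discussion immediately preceding the theorem, $G$ is the semidirect product of $\PGL_n(\mathbb F_{q^s})$ and $\text{Gal}(\mathbb F_{q^s}/\mathbb F_q)$, hence
\[ |G|=s\cdot |\PGL_n(\mathbb F_{q^s})|. \]
Proposition~\ref{orbits} tells us that the action of $G$ on $\Gen_k(A,\mathbb F_q)$ is free, so every orbit has size exactly $|G|$ and the number of distinct orbits is $N/|G|$.

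For the first assertion, Proposition~\ref{orbits} states that $A^m$ admits $k$ generators as an $\mathbb F_q$-algebra if and only if there exist at least $m$ distinct $G$-orbits in $\Gen_k(A,\mathbb F_q)$, i.e.\ if and only if $N/|G|\geq m$. Rewriting this inequality as $m\leq N/(s|\PGL_n(\mathbb F_{q^s})|)$ gives the desired bound.

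For the counting formula, I would use Theorem~\ref{simple} in its precise form: a $k$-tuple $(a_1,\ldots,a_k)\in (A^m)^k$ with $a_i=(a_{i1},\ldots,a_{im})$ generates $A^m$ iff each column $c_j:=(a_{1j},\ldots,a_{kj})$ lies in $\Gen_k(A,\mathbb F_q)$ and no two distinct columns $c_i,c_j$ are related by an element of $\Aut_{\mathbb F_q}(A)$. Equivalently, the ordered $m$-tuple of columns $(c_1,\ldots,c_m)$ is an ordered choice of representatives lying in pairwise distinct $G$-orbits of $\Gen_k(A,\mathbb F_q)$. To count such ordered tuples I would fill the columns one by one: the first column $c_1$ may be chosen in $N$ ways; once $c_1,\ldots,c_j$ have been chosen from $j$ distinct orbits, the next column $c_{j+1}$ must avoid those $j$ orbits, which together occupy $j|G|$ elements, leaving $N-j|G|$ valid choices. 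Multiplying over $j=0,\ldots,m-1$ yields
\[ |\Gen_k(A^m,\mathbb F_q)|=\prod_{j=0}^{m-1}\bigl(N-j\cdot s\cdot |\PGL_n(\mathbb F_{q^s})|\bigr), \]
as claimed. Note that this product automatically vanishes precisely when $m>N/|G|$, which is consistent with the first half of the theorem.

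There is no substantial obstacle here, since the real work is already done in Theorem~\ref{simple} and Proposition~\ref{orbits}; the only step worth a moment of care is the verification that every $G$-orbit in $\Gen_k(A,\mathbb F_q)$ has size exactly $|G|$, but this is immediate from freeness of the action. The argument is therefore just the combinatorics of choosing ordered representatives from distinct orbits of a free group action.
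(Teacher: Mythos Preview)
Your proof is correct and follows essentially the same approach as the paper: identify $|\Aut_{\mathbb F_q}(A)|=s|\PGL_n(\mathbb F_{q^s})|$, use freeness of the action (Proposition~\ref{orbits}) to count orbits, and then count ordered $m$-tuples of elements in pairwise distinct orbits. The only cosmetic difference is that the paper counts by first choosing an ordered $m$-tuple of distinct orbits and then a representative in each ($m!\binom{o}{m}t^m$), whereas you count sequentially column by column; both immediately yield $\prod_{j=0}^{m-1}(N-j|G|)$.
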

\begin{proof}
As we noted above, $\Aut_{\mathbb F_q}(A)$ has $s|\text{PGL}_n(\mathbb F_{q^s})|$ elements.
Since $\Aut_{\mathbb F_q}(A)$ acts freely on $\Gen_k(A,\mathbb F_{q})$, the number of orbits
of this action is equal to
$\displaystyle \frac{|\Gen_k(A,\mathbb F_{q})|}{s|\text{\rm PGL}_n(\mathbb F_{q^s})|}$.
The first part of the theorem is now an immediate consequence of Proposition \ref{orbits}.

To prove the second part note that according to Proposition \ref{orbits} the elements
of $\Gen_k(A^m,\mathbb F_{q})$ are in bijective correspondence with sequences of length $m$
of elements from $\Gen_k(A,\mathbb F_{q})$, with no two elements in the same orbit of
$\Aut_{\mathbb F_q}(A)$. In order to count these sequences, let $o$ be the number of orbits
of the action of $\Aut_{\mathbb F_q}(A)$ on $\Gen_k(A,\mathbb F_{q})$
and let $t$ be the size of each orbit.
We can choose a sequence of $m$ different orbits $O_1,\ldots,O_m$ in $m!{o\choose m}$
ways and the number of sequences $g_1,\ldots,g_m$ such that $g_i\in O_i$ for $i=1,\ldots,m$
is $t^m$. Thus
\[|\Gen_k(A^m,\mathbb F_{q})|=m!{o\choose m}t^m=\prod_{i=0}^{m-1}(ot-it).\]
 The second part of the theorem follows now immediately from the equalities
$ot=|\Gen_k(A,\mathbb F_{q})|$ and
$t=s|\text{\rm PGL}_n(\mathbb F_{q^s})|$.
\end{proof}

Perhaps it is worth to mention that for a simple separable algebra $A$ over any field $F$
the sets $\Gen_k(A,F)$ are non-empty for any $k\geq 2$. In other words, we have the following

\begin{te}
Let $A$ be a simple separable algebra over a field $F$. Then $A$ can be generated by two
elements as an $F$-algebra.
\end{te}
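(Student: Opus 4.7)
The plan is to split according to the cardinality of $F$. If $F$ is infinite, the statement is immediate from \cite{mmbp}, which asserts that any separable algebra over an infinite field is two-generated, so I would only have to treat the case of a finite base field. Suppose then that $F=\mathbb{F}_q$. By Wedderburn's little theorem every finite division ring is a field, so $A\cong \M_n(E)$ for some $n\geq 1$ and some finite extension $E=\mathbb{F}_{q^s}$ of $F$. The case $n=1$ is trivial since a primitive element of $E/F$ generates $A$, so I would assume $n\geq 2$.

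For this remaining case the plan is to use a crossed-product style construction. Let $K=\mathbb{F}_{q^{ns}}$, regarded as an $E$-vector space of dimension $n$; then $\End_E(K)\cong \M_n(E)\cong A$, and I would identify these. Choose a primitive element $\gamma$ of $K$ over $F$ (so in particular $E[\gamma]=K$), and let $X\in A$ be multiplication by $\gamma$ on $K$. Let $\sigma\in\text{Gal}(K/E)$ be the Frobenius generator $x\mapsto x^{q^s}$, which is $E$-linear and therefore defines an element $Y:=\sigma\in A$. The assertion I want to establish is that $X$ and $Y$ generate $A$ as an $F$-algebra.

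To verify this, set $B=F[X,Y]\subseteq A$. Since $X^k$ equals multiplication by $\gamma^k$, the subalgebra $F[X]$ coincides with the image $\mu_K$ of the left regular representation $K\hookrightarrow \End_E(K)$. In particular $F[X]\supseteq E\cdot \operatorname{id}_K=Z(A)$, so $B$ is actually an $E$-subalgebra of $A$ that contains $\mu_K$ and the element $\sigma$. Hence $B$ contains every $\mu_\alpha\sigma^j$ with $\alpha\in K$ and $0\leq j<n$. By Dedekind's theorem on the linear independence of distinct automorphisms of a field, the operators $\sigma^0,\sigma,\ldots,\sigma^{n-1}$ are $K$-linearly independent in $\End_E(K)$, so the $E$-subspace spanned by the $\mu_\alpha\sigma^j$ already has $E$-dimension $n\cdot n=n^2=\dim_E A$. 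This forces $B=A$, which is exactly the two-generation claim.

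The main obstacle is the finite-field case just outlined; the step that takes real care is the final dimension argument, which is a concrete incarnation of the crossed-product identification $\End_E(K)\cong K\rtimes\langle\sigma\rangle$ and reflects the vanishing of the Brauer group of a finite field. Once that identification is in hand, the rest of the argument is straightforward linear algebra together with the basic observation that $\gamma$ being primitive over $F$ pulls the center $E$ of $A$ into $F[X]$.
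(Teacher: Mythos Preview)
Your argument is correct and complete. The infinite case is handled by the cited result, and for $F=\mathbb F_q$ your realization of $A\cong\End_E(K)$ with $K=\mathbb F_{q^{ns}}$, together with Dedekind's linear independence of the distinct automorphisms $\sigma^0,\dots,\sigma^{n-1}$, gives exactly the $E$-dimension count $n\cdot n=n^2$ needed to conclude $F[X,Y]=A$. All the small details check out: $\sigma$ is $E$-linear because it fixes $E$, $F[X]=\mu_K$ because $F[\gamma]=K$, and $B$ is an $E$-subalgebra because $\mu_E\subseteq\mu_K\subseteq B$.

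The paper's proof proceeds by a different and more elementary route. It works directly inside $\M_n(\mathbb F_{q^s})$ with matrix units: taking $u$ a generator of $\mathbb F_{q^s}^{\times}$, it sets $A=uE_{11}$ and lets $B=E_{1n}+\sum_{i=1}^{n-1}E_{i+1,i}$ be the cyclic shift matrix, then verifies the one-line identity $u^kE_{ij}=B^{i-1}A^kB^{\,n+1-j}$, which immediately yields all of $\M_n(\mathbb F_{q^s})$ over $\mathbb F_q$. That argument is purely combinatorial and avoids any Galois theory. Your approach, by contrast, is structural: it exhibits $\M_n(E)$ as the crossed product $K\rtimes\mathrm{Gal}(K/E)$ and finds the two generators inside that description. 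The paper's proof is shorter and more self-contained; yours is more conceptual and explains \emph{why} two elements suffice, and it would adapt with no change to any situation where $A$ arises as $\End_E(K)$ for a cyclic Galois extension $K/E$ with $K$ simple over $F$.
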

\begin{proof}
For infinite fields $F$ the result has been proved in \cite{mmbp}. When $F=\mathbb F_{q}$
is a finite field with $q$ elements then $A$ is isomorphic to $\M_n(\mathbb F_{q^s})$
for some positive integers $n$ and $s$. Let $u$ be a generator of the multiplicative group
of $\mathbb F_{q^s}$, so in particular $\mathbb F_{q^s}=\mathbb F_{q}[u]$.
For $1 \le i,j \le n$ let
$E_{ij}$ denote the matrix whose $(i,j)$ entry is $1$
and all other entries are $0$. Let $A = uE_{11}$
and $B= E_{1n} + \sum_{i=1}^{n-1} E_{i+1,i\,}$. Then
$u^kE_{ij} = B^{i-1}A^kB^{n+1-j}$ for all $1\leq i,j\leq n$ and all $k\geq 0$. It follows that $A$ and $B$
generate the $\mathbb F_{q}$-algebra $\M_n(\mathbb F_{q^s})$.

\end{proof}

\section{The numbers $|\Gen_k(\M_n(\mathbb F_q),\mathbb F_{q})|$}\label{s7}
In this section we will attempt to get some information about the numbers
$|\Gen_k(\M_n(\mathbb F_q),\mathbb F_{q})|$. In particular, we will compute them when $n\leq 3$.
To simplify the notation, we make the following definition.

\begin{definition} {\rm Let $m,n$ be positive integers and let $q$ be a prime power.
We introduce the following notation:
\begin{enumerate}[\rm (i)]
\item $\G_{m,n}(\mathbb F_q)=\Gen_m(\M_n(\mathbb F_q),\mathbb F_{q})$.

\item $\g_{m,n}(q)=|\G_{m,n}(\mathbb F_q)|$.

\item $\displaystyle \gen_{m,n}(q)=\frac{\g_{m,n}(q)}{|\text{\rm PGL}_n(\mathbb F_{q})| }$.
\end{enumerate}
}
\end{definition}
Note that by Theorem~\ref{formula}, the number $\gen_{m,n}(q)$ is equal to
the largest $k\in \mathbb Z$ such that $r(\M_n(\mathbb F_q)^k,\mathbb F_{q})\leq m$. Thus our notation
agrees with that introduced in Definition~\ref{defgen}.

When $n=1$, an $m$-tuple generates $\mathbb F_{q}$ if and only if it contains a non-zero element.
It follows that $\g_{m,1}(q)=q^m-1$. From now on in this section we assume that $n\geq 2$, unless
stated otherwise.

Our attempt at computing the numbers $\g_{m,n}(q)$ is based on the following simple observation: a set
of matrices does not generate the whole algebra $\mt{n}$ if and only if there is a maximal subalgebra
of $\mt{n}$ that contains this set. Thus the following is true

\begin{eqnarray}\label{avoidmax}
\G_{m,n}\left( \mathbb{F}_q \right) =\ \ \ \ \ \ \ \ \ \ \ \ \ \ \ \ \ \ \ \ \ \ \ \ \ \ \ \ \ \ \ \ \ \\ \nonumber
\M_n(\mathbb{F}_q)^m - \bigcup \left\{ \mathcal{A}^m :
~\mathcal{A}~\text{is a maximal subalgebra of}~\M_n(\mathbb{F}_q) \right\}\text{.}
\end{eqnarray}
Let $\Dg$ be the  subalgebra of scalar matrices of  $\mt{n}$. Since any subalgebra of $\mt{n}$ contains
${\Dg}$, we can subtract ${\Dg}^m$ in the above formula and get that $G_{m,n}(\Fg_q)$ is equal to
\[
\M_n(\mathbb{F}_q)^m -{\Dg}^m - \bigcup \left\{ \mathcal{A}^m -{\Dg}^m :
~\mathcal{A}~\text{is a maximal subalgebra of}~\M_n(\mathbb{F}_q) \right\}\text{.}
\]
Since $|\mt{n}|=q^{n^2}$ and $|\Dg|=q$, the inclusion-exclusion formula yields
$$\g_{m,n}(q)=q^{mn^2}-q^m+\sum(-1)^{k}|({\Ag}_{i_1}^m-{\Dg}^m)\cap\ldots\cap ({\Ag}_{i_k}^m-{\Dg}^m)|,$$
where the sum is taken over all non-empty subsets $\{{\Ag}_{i_1},\ldots,{\Ag}_{i_k}\}$ of the set of all maximal
subalgebras of $\M_n(\Fg_q)$. Since $\Dg$ is contained in every subalgebra of $\mt{n}$, we have
\[({\Ag}_{i_1}^m-{\Dg}^m)\cap\ldots\cap ({\Ag}_{i_k}^m-{\Dg}^m)={\Ag}_{i_1}^m\cap\ldots\cap{\Ag}_{i_k}^m-{\Dg}^m=
({\Ag}_{i_1}\cap\ldots\cap{\Ag}_{i_k})^m-{\Dg}^m
\]
and therefore
\begin{equation}\label{Counting}
\g_{m,n}(q)=q^{mn^2}-q^m+\sum(-1)^{k}\left(|{\Ag}_{i_1}\cap\ldots\cap{\Ag}_{i_k}\right)|^m-q^m),
\end{equation}
where the sum is taken over all non-empty subsets $\{{\Ag}_{i_1},\ldots,{\Ag}_{i_k}\}$ of the set of all maximal
subalgebras of $\M_n(\Fg_q)$.

In order to evaluate the right hand side of (\ref{Counting}), it is necessary to have a description of
all maximal subalgebras of $\mt{n}$.
It is quite easy to produce one type of maximal subalgebras of $\mt{n}$. In fact, we have the following
result.

\begin{lemma}\label{stabmax}
 For a proper non-trivial vector subspace $U$ of $\Fg_q^n$ let ${\Ag}_U$ be the set of all matrices from $\mt{n}$ that
 leave $U$ invariant. Then ${\Ag}_U$ is a maximal subalgebra of $\mt{n}$.
 Moreover, each ${\Ag}_U$ is uniquely determined by $U$, that is, if ${\Ag}_U={\Ag}_{U'}$ then $U=U'$.
\end{lemma}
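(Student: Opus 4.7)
The plan is to prove the two assertions in turn. That $\mathcal{A}_U$ is a subalgebra of $\M_n(\mathbb{F}_q)$ is immediate, since the set of linear endomorphisms of $\mathbb{F}_q^n$ leaving $U$ invariant is closed under addition and composition and contains the identity. The substance of the lemma lies in maximality and the injectivity of $U \mapsto \mathcal{A}_U$.

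For maximality I would fix a basis $e_1, \ldots, e_n$ of $\mathbb{F}_q^n$ whose first $k$ vectors form a basis of $U$ (where $0 < k < n$). With respect to this basis $\mathcal{A}_U$ is precisely the algebra of block upper triangular matrices $\left(\begin{smallmatrix} P & Q \\ 0 & R \end{smallmatrix}\right)$ with $P \in \M_k(\mathbb{F}_q)$, $R \in \M_{n-k}(\mathbb{F}_q)$ and $Q$ a $k\times(n-k)$ block. Suppose $\mathcal{B}$ is a subalgebra of $\M_n(\mathbb{F}_q)$ properly containing $\mathcal{A}_U$ and pick $B \in \mathcal{B} \setminus \mathcal{A}_U$; then the lower-left block $S$ of $B$ is non-zero. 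Setting $E_1 = \left(\begin{smallmatrix} 0 & 0 \\ 0 & I \end{smallmatrix}\right)$ and $E_2 = \left(\begin{smallmatrix} I & 0 \\ 0 & 0 \end{smallmatrix}\right)$, both of which lie in $\mathcal{A}_U \subset \mathcal{B}$, the product $E_1 B E_2 = \left(\begin{smallmatrix} 0 & 0 \\ S & 0 \end{smallmatrix}\right)$ belongs to $\mathcal{B}$. Next, for any $R' \in \M_{n-k}(\mathbb{F}_q)$ and $P' \in \M_k(\mathbb{F}_q)$ the block diagonal matrices $\left(\begin{smallmatrix} 0 & 0 \\ 0 & R' \end{smallmatrix}\right)$ and $\left(\begin{smallmatrix} P' & 0 \\ 0 & 0 \end{smallmatrix}\right)$ are in $\mathcal{A}_U$, so $\left(\begin{smallmatrix} 0 & 0 \\ R' S P' & 0 \end{smallmatrix}\right) \in \mathcal{B}$. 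Choosing $R' = s^{-1} E_{i,a}$ and $P' = E_{b,j}$ where $(a,b)$ is any pair of indices with $s := S_{ab} \ne 0$, one recovers each matrix unit supported in the lower-left block as an element of $\mathcal{B}$. Since $\mathcal{B}$ is closed under $\mathbb{F}_q$-linear combinations and already contains every block upper triangular matrix, this forces $\mathcal{B} = \M_n(\mathbb{F}_q)$.

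For injectivity, assume $U \ne U'$ are distinct proper non-trivial subspaces and, after swapping their roles if necessary, pick $u \in U \setminus U'$. Since $U'$ is non-zero, choose a linear functional $\varphi \colon \mathbb{F}_q^n \to \mathbb{F}_q$ that does not vanish on $U'$. The rank-one operator $A(x) = \varphi(x) u$ sends all of $\mathbb{F}_q^n$ into $\mathbb{F}_q u \subset U$, so $A$ preserves $U$ and lies in $\mathcal{A}_U$; but for any $v \in U'$ with $\varphi(v) \ne 0$ we have $A(v) = \varphi(v) u \notin U'$, because $u \notin U'$. Hence $A \notin \mathcal{A}_{U'}$, which shows $\mathcal{A}_U \ne \mathcal{A}_{U'}$.

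The only step that requires genuine attention is the exhaustion of $\M_{n-k,k}(\mathbb{F}_q)$ by the products $R' S P'$ as $R'$ and $P'$ range over the full matrix algebras while $S$ is held fixed and non-zero. This is an elementary but crucial matrix-unit computation reflecting the simplicity of $\M_k(\mathbb{F}_q)$ and $\M_{n-k}(\mathbb{F}_q)$; aside from this, the argument is just block matrix bookkeeping together with a single rank-one construction for injectivity.
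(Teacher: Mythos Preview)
Your proof is correct. Both assertions are established soundly: the matrix-unit computation for maximality is valid (your verification that $R'SP' = E_{i,j}$ in the lower-left block is a clean way to see that the $\M_{n-k}\times\M_k$-bimodule generated by any nonzero $S$ is everything), and the rank-one construction for injectivity works as stated once the swapping of $U$ and $U'$ is understood to handle the case $U\subsetneq U'$.

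Your approach differs from the paper's in a meaningful way. For maximality, the paper argues module-theoretically: it first checks that the center of $\mathcal{A}_U$ is scalar, then shows that the algebra $\mathcal{A}'$ generated by $\mathcal{A}_U$ and any $A\notin\mathcal{A}_U$ has no invariant proper nonzero subspace, so $\mathbb{F}_q^n$ is a simple faithful $\mathcal{A}'$-module with scalar centralizer, forcing $\mathcal{A}'\cong\M_n(\mathbb{F}_q)$ by Wedderburn theory. Your argument is instead a direct block-matrix computation and never invokes the structure theory of simple algebras; it is more elementary and entirely self-contained. For injectivity, the paper does a short case analysis (either $U'\subsetneq U$ and one uses transitivity on $U$, or there exists $v\in U'\setminus U$ and one maps $v$ outside $U'$), whereas your single rank-one operator handles both cases at once. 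The paper's route has the advantage of yielding the center computation as a byproduct and of generalizing more readily to coordinate-free settings; yours has the advantage of being shorter and requiring no structure theory.
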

\begin{proof}
First note that the center of ${\Ag}_U$ consists of scalar matrices. In fact, if a matrix $A$ is in the center
of ${\Ag}_U$ then it acts as a scalar $\lambda$ on $U$. The matrix $B=A-\lambda I$ annihilates $U$
and is in the center of ${\Ag}_U$. Suppose that $Bv\neq 0$ for some $v$. Then there is a projection $\Pi$
onto $U$ such that $\Pi(Bv)\neq 0$. Since $\Pi \in {\Ag}_U$, we have $0\neq \Pi Bv=B\Pi v=0$, a contradiction.
Thus $B=0$ and $A$ is a scalar matrix.

Now note that if $U\neq U'$ then there is $A\in {\Ag}_U-{\Ag}_{U'}$. In fact, if $U'\subsetneq  U$ then
such $A$ clearly exists since ${\Ag}_U$ is transitive on $U$. If there is $v\in U'-U$ then for any $w$
there is $A\in {\Ag}_U$ such that $Av=w$. Taking $w\not\in U'$ yields required $A$. This, in particular,
proves the second assertion.

Take any matrix $A$ not in ${\Ag}_U$ and let $\Ag'$ be the algebra generated by $A$ and ${\Ag}_U$.
Note that $\Ag'$ can not fix any non-trivial subspace $V$ of $\Fg_q^n$. In fact, if $V\neq U$ then,
as we have seen above, ${\Ag}_U$ is not contained in ${\Ag}_V$ and $A$ does not take $U$ into $U$. Thus $\Fg_q^n$
is a simple and faithful $\Ag'$-module. It follows that $\Ag'$ is a simple central $\Fg_q$-algebra with
a simple module of dimension $n$ over $\Fg_q$, hence it must be isomorphic to $\mt{n}$.
It follows that ${\Ag}_U$ is maximal.

\end{proof}
The following lemma describes a second type of maximal subalgebras of $\mt{n}$.

\begin{nle}\label{fieldmax}
Let $s$ be a prime divisor of $n$ and let $m=n/s$. Any $\Fg_q$-subalgebra of $\mt{n}$
isomorphic to $\mx{m}{s}$ is maximal. Any two such subalgebras are conjugate in $\mt{n}$ and their
number is equal to $\displaystyle s^{-1}\prod_{s\nmid i, 1\leq i<n}(q^n-q^i)$.
\end{nle}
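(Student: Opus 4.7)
Let $B \subseteq \M_n(\Fg_q)$ be an $\Fg_q$-subalgebra isomorphic to $\mx{m}{s}$, and put $V = \Fg_q^n$, viewed as a $B$-module by restriction. The plan is to first identify the $B$-module structure on $V$ and the centralizer of $B$ in $\M_n(\Fg_q)$, and then read off maximality, conjugacy, and the count from these data.

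First I would observe that, since $\dim_{\Fg_q} V = n = ms$ equals the $\Fg_q$-dimension of the unique simple $\mx{m}{s}$-module, $V$ is a simple $B$-module. Consequently $Z_{\M_n(\Fg_q)}(B) = \End_B(V) \cong \Fg_{q^s}$. To prove maximality, let $B \subsetneq C \subseteq \M_n(\Fg_q)$; then $V$ remains a faithful simple $C$-module (simplicity is inherited from $B$, and faithfulness is immediate from the inclusion into $\End_{\Fg_q}(V)$). By Jacobson density $C = \End_{D'}(V)$ where $D' = \End_C(V) \subseteq \End_B(V) = \Fg_{q^s}$ is an $\Fg_q$-subfield. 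Since $s$ is prime, the only such subfields are $\Fg_q$ and $\Fg_{q^s}$; the first gives $C = \M_n(\Fg_q)$, the second forces $\dim_{\Fg_q} C = m^2 s = \dim_{\Fg_q} B$, hence $C = B$. This establishes maximality.

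For conjugacy, given two such subalgebras $B_1, B_2$, pick any $\Fg_q$-algebra isomorphism $\phi\colon B_1 \to B_2$. Since $B_1$ is a simple $\Fg_q$-subalgebra of the central simple $\Fg_q$-algebra $\M_n(\Fg_q)$, the Skolem--Noether theorem extends $\phi$ to an inner automorphism of $\M_n(\Fg_q)$, so $B_2 = g B_1 g^{-1}$ for some $g \in \GL_n(\Fg_q)$.

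Finally, the number of such subalgebras equals $|\GL_n(\Fg_q)|/|N|$, where $N$ is the normalizer of a fixed $B$ in $\GL_n(\Fg_q)$. The conjugation map $N \to \Aut_{\Fg_q}(B)$ has kernel $Z_{\GL_n}(B) = \Fg_{q^s}^\times$ (units of the centralizer computed above), and is surjective by Skolem--Noether applied once more. By Skolem--Noether internally to $\mx{m}{s}$ together with Galois descent, $\Aut_{\Fg_q}(\mx{m}{s}) = \PGL_m(\Fg_{q^s}) \rtimes \mathrm{Gal}(\Fg_{q^s}/\Fg_q)$, so
\[
|N| = (q^s - 1)\cdot s\cdot |\PGL_m(\Fg_{q^s})| = s\cdot |\GL_m(\Fg_{q^s})|.
\]
Writing $|\GL_n(\Fg_q)| = \prod_{i=0}^{n-1}(q^n - q^i)$ and $|\GL_m(\Fg_{q^s})| = \prod_{j=0}^{m-1}(q^n - q^{sj})$, the quotient is the product over those $i \in \{0,1,\ldots,n-1\}$ with $s \nmid i$, i.e.\ over $1 \le i < n$ with $s \nmid i$, yielding the stated formula $s^{-1}\prod_{s\nmid i,\ 1\leq i<n}(q^n - q^i)$. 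The main obstacle is the maximality step; the other two parts reduce cleanly to Skolem--Noether once the centralizer of $B$ has been computed.
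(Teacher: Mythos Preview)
Your proof is correct and follows essentially the same line as the paper's: both identify $V=\Fg_q^n$ as the simple $B$-module, deduce that any overalgebra $C$ is simple with $\End_C(V)$ a subfield of $\Fg_{q^s}$, and use primality of $s$ to force $C=B$ or $C=\M_n(\Fg_q)$; the conjugacy and counting are likewise done via Skolem--Noether and the normalizer/stabilizer computation $|N|=s\,|\GL_m(\Fg_{q^s})|$. The only point you leave implicit is the \emph{existence} of such a subalgebra (needed for the orbit count to be meaningful and for the stated number to be nonzero); the paper supplies this by letting $\mx{m}{s}$ act on its simple module, which has $\Fg_q$-dimension $n$.
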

\begin{proof}
Let ${\Ag}$ be a $\Fg_q$-subalgebra of $\mt{n}$ isomorphic to $\mx{m}{s}$.
Thus $\Fg_q^n$ is an ${\Ag}$-module of dimension $m$ over the center of ${\Ag}$ (which
is isomorphic to $\Fg_{q^s}$). It follows that $\Fg_q^n$ is a simple ${\Ag}$-module.
Suppose that ${\Ag}'$ is a $\Fg_q$-subalgebra of $\mt{n}$ containing ${\Ag}$.
Then $\Fg_q^n$ is a simple and faithful ${\Ag}'$-module. It follows that ${\Ag}'$
is simple, hence it is isomorphic to $\mx{k}{r}$, where $kr=n$ and $r$ is the dimension
of the center of ${\Ag}'$ over $\Fg_q$. Clearly, the center of ${\Ag}'$ is contained
in the center of ${\Ag}$. It follows that $r|s$, and therefore $r=1$ or $r=s$ (recall that
$s$ is a prime). In the former case we get ${\Ag}'=\mt{n}$ and in the latter case
we have ${\Ag}'={\Ag}$. This shows that ${\Ag}$ is maximal.

For the existence of an $\Fg_q$-subalgebra of $\mt{n}$
isomorphic to $\mx{m}{s}$ consider the (unique up to isomorphism) simple  $\mx{m}{s}$-module
$V$. It has dimension $m$ as a vector space over $\Fg_{q^s}$, so as a $\Fg_{q}$-vector space
it is isomorphic to $\Fg_q^n$. Thus the action of $\mx{m}{s}$ on $V$ induces
an $\Fg_{q}$-algebra embedding of $\mx{m}{s}$ into $\mt{n}$.

Fix now a $\Fg_q$-subalgebra ${\Ag}$ of $\mt{n}$ isomorphic to $\mx{m}{s}$. By the Noether-Skolem theorem,
any $\Fg_q$-algebra homomorphism of ${\Ag}$ into $\mt{n}$ is given by conjugation with some invertible
element of $\mt{n}$. This means that the group $\text{\rm GL}_n(\Fg_q)$ acts transitively on the set of
subalgebras of $\mt{n}$ which are isomorphic to $\mx{m}{s}$. Since ${\Ag}$ is
maximal, the subgroup $C$ of elements which act trivially on ${\Ag}$ coincides with the multiplicative group of
the center of ${\Ag}$. The quotient of the stabilizer of ${\Ag}$ by $C$ is,
again by the  Noether-Skolem theorem, isomorphic to the group of all automorphism of the
$\Fg_q$-subalgebra ${\Ag}$. We have seen earlier that the group of $\Fg_q$-algebra automorphisms of  $\mx{m}{s}$
has $s|\text{\rm PGL}_m(\mathbb F_{q^s})|$ elements (see the discussion directly before Theorem~\ref{formula}).
Therefore the stabilizer
of ${\Ag}$ has $|C|\cdot s\cdot |\text{\rm PGL}_m(\mathbb F_{q^s})|=s|\text{\rm GL}_m(\mathbb F_{q^s})|$
elements. Consequently, the number of
$\Fg_q$-subalgebra ${\Ag}$ of $\mt{n}$ isomorphic to $\mx{m}{s}$ is equal to
\[\frac{|\text{\rm GL}_n(\Fg_q)|}{s|\text{\rm GL}_m(\mathbb F_{q^s})|}=
s^{-1}\prod_{s\nmid i, 1\leq i<n}(q^n-q^i).\]
\end{proof}
It turns out that the maximal subalgebras described in Lemmas \ref{stabmax} and \ref{fieldmax}
exhaust all possible maximal subalgebras. In other words, we have the following result.

\begin{proposition}\label{clasmax}
Let ${\Ag}$ be a maximal $\Fg_q$-subalgebra of $\mt{n}$. Then either ${\Ag}={\Ag}_U$ for some subspace $U$ of
$\Fg_q^n$ or ${\Ag}$ is isomorphic to $\mx{m}{s}$ for some prime divisor $s$ of $n=ms$.
\end{proposition}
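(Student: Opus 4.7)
The plan is to analyze a maximal subalgebra $\mathcal{A}\subseteq\M_n(\Fg_q)$ by studying how it acts on the standard module $V=\Fg_q^n$, and to split the argument according to whether $V$ is simple as an $\mathcal{A}$-module. Since $\mathcal{A}$ contains the identity, $V$ is automatically a faithful $\mathcal{A}$-module, which forces structural constraints on $\mathcal{A}$.

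First I would dispose of the reducible case. If $V$ admits a proper nontrivial $\mathcal{A}$-invariant subspace $U$, then by definition $\mathcal{A}\subseteq\mathcal{A}_U$. Lemma~\ref{stabmax} tells us that $\mathcal{A}_U$ is a proper subalgebra of $\M_n(\Fg_q)$ (it is maximal), so by maximality of $\mathcal{A}$ we must have $\mathcal{A}=\mathcal{A}_U$, which is the first alternative of the proposition.

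Next I would handle the case where $V$ is simple as an $\mathcal{A}$-module. Since $V$ is also faithful, the Jacobson radical of $\mathcal{A}$ annihilates $V$ and is therefore zero, so $\mathcal{A}$ is semisimple. A semisimple algebra possessing a simple faithful module must itself be simple (a nontrivial product decomposition would force one factor to act as zero on the simple module). By Wedderburn--Artin together with Wedderburn's little theorem on finite division rings, $\mathcal{A}\cong \M_k(\Fg_{q^t})$ for some integers $k,t\geq 1$, and the identification $V\cong \Fg_{q^t}^k$ gives $kt=n$. Since $\mathcal{A}$ is a proper subalgebra of $\M_n(\Fg_q)$, comparing $\Fg_q$-dimensions $k^2t<k^2t^2$ forces $t\geq 2$.

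It remains to show that $t$ must be prime; this is where the maximality hypothesis is used a second time and is the main obstacle. Suppose for contradiction that $t$ admits a prime divisor $s$ with $1<s<t$. Then $\Fg_{q^s}\subset\Fg_{q^t}$, so $V$ is naturally an $\Fg_{q^s}$-vector space of dimension $kt/s$. Every $\Fg_{q^t}$-linear endomorphism of $V$ is in particular $\Fg_{q^s}$-linear, giving an inclusion
\[
\mathcal{A}\;\cong\;\M_k(\Fg_{q^t})\;\subseteq\;\M_{kt/s}(\Fg_{q^s})\;\subseteq\;\M_n(\Fg_q).
\]
Comparing $\Fg_q$-dimensions $k^2t$, $k^2t^2/s$, and $k^2t^2$, and using $1<s<t$, both inclusions are strict, contradicting the maximality of $\mathcal{A}$. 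Hence $t$ is prime, and setting $s=t$, $m=k$ gives the second alternative of the proposition.
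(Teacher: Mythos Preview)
Your proof is correct and follows essentially the same approach as the paper's. Both arguments handle the reducible case identically, then in the irreducible case use that $V$ is a simple faithful $\mathcal{A}$-module to conclude $\mathcal{A}\cong\M_k(\Fg_{q^t})$ with $kt=n$, and finally exploit the inclusion $\mathcal{A}\subseteq\End_{\Fg_{q^s}}(V)\cong\M_{n/s}(\Fg_{q^s})$ for a prime $s\mid t$. The only cosmetic difference is that the paper argues directly (pick any prime $s\mid t$, observe $\mathcal{A}$ lies in the centralizer of $\Fg_{q^s}$, and invoke maximality together with Lemma~\ref{fieldmax} to get equality), while you argue by contradiction (assume $t$ composite, produce a strictly intermediate algebra via dimension count); the underlying idea is the same.
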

\begin{proof}
Suppose that ${\Ag}$ fixes some proper non-trivial subspace $U$ of $\Fg_q^n$. Then ${\Ag}$ is contained in ${\Ag}_U$, hence
${\Ag}={\Ag}_U$. If no proper non-trivial subspace of  $\Fg_q^n$ is fixed by ${\Ag}$ then
$\Fg_q^n$ is a simple and faithful ${\Ag}$-module. It follows that ${\Ag}$ is simple and therefore
it is isomorphic to $\mx{k}{r}$, where $kr=n$. Let $s$ be a prime divisor of $r$. The center of
${\Ag}$ contains a subfield $F$ isomorphic to $\Fg_{q^s}$. The centralizer of $F$ in $\mt{n}$
consists exactly of those linear transformations of $\Fg_q^n$ which are $F$-linear. Thus it is
a subalgebra of $\mt{n}$ isomorphic to $\mx{m}{s}$, where $ms=n$. On the other hand, this subalgebra
contains ${\Ag}$, hence it must be equal to ${\Ag}$.
\end{proof}

In order to carry our strategy to compute the numbers $\g_{m,n}(q)$ we need to understand the intersections
of maximal subalgebras of $\mt{n}$. This appears to be a very challenging combinatorial problem and so far
we have only succeeded to complete the computations for $n\leq 3$. One of the complications in the general case
is that the maximal subalgebras are of two different types. This difficulty disappears when $n$ is a prime
by the following observation.

\begin{lemma}\label{lm3}
Let $n$ be a prime number. If $\Ag$ is a maximal subalgebra of $\mt{n}$ isomorphic to $\Fg_{q^n}$, then its
intersection with any other maximal subalgebra is equal to $\Dg$, the algebra of scalar matrices.
\end{lemma}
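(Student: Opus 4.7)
The plan is to apply Proposition \ref{clasmax} to classify the possible other maximal subalgebras, and then handle each case by exploiting the fact that $n$ is prime twice: once to restrict the possible forms, and once via the subfield structure of $\mathbb{F}_{q^n}$.

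First I would observe the following structural fact about $\Ag$: since $\Ag \cong \mathbb{F}_{q^n}$ embeds in $\mt{n}$ and acts faithfully on $\Fg_q^n$, this action turns $\Fg_q^n$ into an $\Ag$-module whose $\Fg_q$-dimension equals $n = [\Ag : \Fg_q]$, so $\Fg_q^n$ is a free $\Ag$-module of rank one. In particular $\Fg_q^n$ is a simple $\Ag$-module, and the only $\Ag$-stable subspaces of $\Fg_q^n$ are $0$ and $\Fg_q^n$.

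Now let $\Ag'$ be a different maximal subalgebra. By Proposition \ref{clasmax}, since $n$ is prime, $\Ag'$ is either of the form $\Ag_U$ for a proper non-trivial subspace $U$ of $\Fg_q^n$, or $\Ag'$ is simple of the form $\M_k(\Fg_{q^r})$ with $kr = n$, which (again using that $n$ is prime) forces $\Ag' \cong \Fg_{q^n}$. In either case, $\Ag \cap \Ag'$ is an $\Fg_q$-subalgebra of the field $\Ag \cong \Fg_{q^n}$ containing $\Dg$, and hence is a subfield of $\Fg_{q^n}$ containing $\Fg_q$. Because $n$ is prime, the only such subfields are $\Fg_q$ itself (corresponding to $\Dg$) and all of $\Fg_{q^n}$ (corresponding to $\Ag$). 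Thus either $\Ag \cap \Ag' = \Dg$ or $\Ag \subseteq \Ag'$, and it remains to rule out the latter.

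If $\Ag \subseteq \Ag' = \Ag_U$, then every element of $\Ag$ stabilizes $U$, making $U$ an $\Ag$-stable subspace of $\Fg_q^n$. By the opening observation this forces $U = 0$ or $U = \Fg_q^n$, contradicting that $U$ is proper and non-trivial. If instead $\Ag \subseteq \Ag'$ with $\Ag' \cong \Fg_{q^n}$, then $|\Ag| = |\Ag'|$ forces $\Ag = \Ag'$, contradicting that $\Ag'$ is different from $\Ag$. This completes the argument. The main (minor) obstacle is the subspace case, which is handled by packaging the natural module structure so that ``$\Ag$-stable subspace'' translates to ``$\Fg_{q^n}$-subspace of a one-dimensional $\Fg_{q^n}$-vector space''; once this identification is in place, both cases are dispatched uniformly by the subfield structure of $\Fg_{q^n}/\Fg_q$.
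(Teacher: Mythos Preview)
Your proof is correct and shares the paper's core idea: the intersection $\Ag\cap\Ag'$ is an $\Fg_q$-subalgebra of the field $\Ag\cong\Fg_{q^n}$, hence a subfield, and since $n$ is prime the only options are $\Dg$ and $\Ag$.

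Where you diverge is in ruling out $\Ag\subseteq\Ag'$. You invoke Proposition~\ref{clasmax} and do a case split (stabilizer type versus field type), using the simple-module observation to handle the first case and a cardinality count for the second. The paper dispatches this in one line without any classification: $\Ag$ is itself maximal, so $\Ag\subseteq\Ag'$ with $\Ag'$ proper forces $\Ag=\Ag'$, contradicting that $\Ag'$ is a \emph{different} maximal subalgebra. Your opening paragraph about $\Fg_q^n$ being a rank-one free $\Ag$-module is thus not needed here (though it is exactly the argument used inside Lemma~\ref{fieldmax}). The upshot: your argument is sound but carries more machinery than the statement requires; the maximality of $\Ag$ already does all the work.
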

\begin{proof}
Since $n$ is prime, $\Fg_{q^n}$ has only two subfields, itself and $\Fg_q$. In other words,
$\Ag$ has only two subalgebras, $\Ag$ and $\Dg$. Since the intersection cannot be equal to $\Ag$, it is equal to $\Dg$.
\end{proof}

For the rest of this section we assume that $n$ is a prime number. Thus
Lemma \ref{lm3} tells us that if the set $\{{\Ag}_{i_1},\ldots,{\Ag}_{i_k}\}$ of maximal subalgebras of $\mt{n}$
includes a subalgebra isomorphic to $\Fg_{q^n}$, and $k\geq 2$, then the intersection of the subalgebras in this
set is equal to $\Dg$, and so the corresponding term in (\ref{Counting}),
$|{\Ag}_{i_1}\cap\ldots\cap{\Ag}_{i_k}|^m-q^m$, is equal to $0$. It follows that we can rewrite (\ref{Counting})
in the following way:
\[\g_{m,n}(q)=q^{mn^2}-q^m-
\sum_{{\Ag}\cong\Fg_{q^n}}\left(|{\Ag}|^m-q^m\right)+\]
\[+\sum(-1)^{k}\left(|{\Ag}_{U_1}\cap\ldots
\cap{\Ag}_{U_k}|^m-q^m\right)\]
where the second sum is over all non-empty sets $\{U_1,\ldots,U_k\}$ of non-trivial proper subspaces of $\Fg_q^n$.
By Lemma \ref{fieldmax}, the first sum consists of $n^{-1}\prod_{i=1}^{n-1}(q^n-q^i)$ terms, each term being
$q^{mn}-q^m$. Thus we get the following formula
\begin{eqnarray}\label{preparation0}
\g_{m,n}(q)=q^{mn^2}-q^m-n^{-1}(q^{mn}-q^m)\prod_{i=1}^{n-1}(q^n-q^i)+ \\ \nonumber  +\sum(-1)^{k}\left(
|{\Ag}_{U_1}\cap\ldots \cap{\Ag}_{U_k}|^m-q^m\right).
\end{eqnarray}
where the sum is over all non-empty sets $\{U_1,\ldots,U_k\}$ of non-trivial proper subspaces of $\Fg_q^n$.

Let $\fg$ be the set of all subalgebras of $\mt{n}$ which are intersections of some of the maximal
algebras of the form ${\Ag}_{U}$. For each  ${\Ag}\in \fg$, define the degree $\d({\Ag})$ of ${\Ag}$ by
\begin{equation}
\d({\Ag})=\sum (-1)^k,
\end{equation}
where the sum is over all sets $\{U_1,\ldots,U_k\}$ of non-trivial proper subspaces of $\Fg_q^n$ such that
${\Ag}_{U_1}\cap\ldots \cap{\Ag}_{U_k}={\Ag}$. Thus (\ref{preparation0}) can be stated as
\begin{eqnarray}\label{preparation1}
\g_{m,n}(q)=q^{mn^2}-q^m-n^{-1}(q^{mn}-q^m)\prod_{i=1}^{n-1}(q^n-q^i)+\\ \nonumber  +
\sum_{{\Ag}\in \fg}\d({\Ag})\left(|{\Ag}|^m
- q^m\right).
\end{eqnarray}

The following simple lemma will be useful for our analysis of elements of $\fg$.

\begin{lemma}\label{scaling}
Let $F$ be a field, $V$ be a vector space over $F$, and let $v_1,\ldots,v_k \in V$ be a minimal linearly
dependent collection of vectors (so any $k-1$ of them are linearly independent). Then any linear endomorphism
of $V$ that scales $v_1,\ldots,v_k$ is a scalar operator when restricted to the linear span of
$v_1,\ldots,v_k$.
\end{lemma}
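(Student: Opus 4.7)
The plan is to exploit the fact that a minimal linear dependence forces every coefficient in the dependence relation to be nonzero, and then to compare the relation scaled by one eigenvalue with the image of the relation under the endomorphism.

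First I would record the two key consequences of the hypothesis. Write $Tv_i = \lambda_i v_i$ for scalars $\lambda_1,\dots,\lambda_k \in F$. Since $v_1,\dots,v_k$ are linearly dependent there is a nontrivial relation $\sum_{i=1}^k c_i v_i = 0$. The minimality assumption (any $k-1$ of the $v_i$ are linearly independent) immediately forces every $c_i$ to be nonzero: if some $c_j = 0$, then the remaining $v_i$ with $i \neq j$ would satisfy a nontrivial dependence, contradicting the hypothesis.

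Next I would apply $T$ to the relation, obtaining $\sum_{i=1}^k c_i \lambda_i v_i = 0$, and subtract $\lambda_1$ times the original relation to get
\[
\sum_{i=2}^{k} c_i(\lambda_i - \lambda_1)\, v_i \;=\; 0.
\]
Because $v_2,\dots,v_k$ are linearly independent by the minimality hypothesis, every coefficient in this expression must vanish. Since each $c_i \neq 0$, this gives $\lambda_i = \lambda_1$ for all $i \geq 2$, so all the $\lambda_i$ coincide to a common value $\lambda$. Consequently $T$ agrees with multiplication by $\lambda$ on each of $v_1,\dots,v_k$, hence on their linear span by linearity.

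There is no real obstacle here; the argument is a direct unpacking of the definition of minimal dependence. The only point to be careful about is the initial observation that minimality rules out any $c_i = 0$, which is exactly what lets us cancel one of the $\lambda_i$ against the others.
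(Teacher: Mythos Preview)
Your proof is correct and essentially identical to the paper's: the paper writes $v_k=\beta_1 v_1+\cdots+\beta_{k-1}v_{k-1}$ with all $\beta_i\neq 0$ and compares the two expressions for $f(v_k)$, which is just your relation $\sum c_i v_i=0$ with $c_k=-1$. The only cosmetic difference is that you subtract $\lambda_1$ times the relation from its image under $T$, while the paper compares coefficients directly after applying $f$; both arguments rest on the same two observations (all coefficients nonzero by minimality, and $k-1$ of the vectors form a basis of the span).
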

\begin{proof}
Let $f$ be a linear endomorphism of $V$ such that $f(v_i) = \alpha_i v_i$ for some $\alpha_i \in F$ and
$i = 1,\ldots,k$.
The assumptions of the lemma imply that  $v_k = \beta_1 v_1 +\ldots+\beta_{k-1} v_{k-1}$ for some non-zero
$\beta_1,\ldots,\beta_{k-1} \in F$.
By expressing $f(v_k)$
in two ways, as $\beta_1 \alpha_1 v_1 +\ldots+\beta_{k-1} \alpha_{k-1}v_{k-1}$ and as $\alpha_k v_k$,
we obtain $\beta_i \alpha_i = \beta_i \alpha_k$ for all $i$. Since none of the $\beta_i$'s is $0$,
we have $\alpha_i = \alpha_k$ for $i=1,\ldots,k$.
\end{proof}

\subsection{The case $n=2$}
In this subsection we evaluate (\ref{preparation1}) in the case $n=2$. Any element ${\Ag}\in \fg$
is of the form ${\Ag}_{U_1}\cap\ldots \cap{\Ag}_{U_k}$, where $k\geq 1$ and $U_1,\ldots, U_k$
are distinct lines in $\Fg_q^2$. Note that by Lemma \ref{scaling}, ${\Ag}=\Dg$ if $k\geq 3$ and in this case
$\Ag$ does not contribute anything to (\ref{preparation1}). It follows that if ${\Ag}$ is
an element of $\fg$ different from $\Dg$, then it can be
expressed as the intersection of maximal subalgebras in a unique way and it is either of the form
$\Ag_U$ or of the form ${\Ag}_{U_1}\cap{\Ag}_{U_2}$. In the former case, we have $|{\Ag}|=q^3$
and $\d({\Ag})=-1$. In the latter case, $|{\Ag}|=q^2$ and $\d({\Ag})=1$.
Since the number of lines in $\Fg_q^2$
is $q+1$, the formula (\ref{preparation1}) takes the following form
\[\g_{m,2}(q)=q^{4m}-q^m-2^{-1}(q^{2m}-q^m)(q^2-q)-(q+1)(q^{3m}-q^m)+\]
\[+2^{-1}(q+1)q(q^{2m}-q^m),\]
which simplifies to
\begin{equation}\label{n=2}
\g_{m,2}(q)=q^{2m+1}(q^{m-1}-1)(q^m-1).
\end{equation}

\subsection{The case $n=3$}
In this subsection we evaluate (\ref{preparation1}) for $n=3$. This is substantially more difficult than the
case $n=2$, but we are still
able to analyze all elements of $\fg$. The following combinatorial lemma will help us evaluate the degree of
some of the algebras in $\fg$.

\begin{lemma}\label{degree0}
Let $X$ be a finite set. Consider a family $\mathcal S$ of subsets of $X$
such that if $Y\in{\mathcal S}$ and $Y\subseteq Y'\subseteq X$, then $Y'$ also
belongs to $\mathcal S$. Suppose furthermore that one of the following two conditions is true.
\begin{enumerate}[\rm (1)]
 \item There is $x\in X$ such that $X'-\{x\}\in {\mathcal S}$ for any $X'\in{\mathcal S}$.
 \item There are $x,y\in X$ such that if $X'\in{\mathcal S}$ and $X'-\{x\}\not\in {\mathcal S}$ then
\begin{enumerate}[\rm (a)]
\item  $X'-\{y\}\in{\mathcal S}$ and
\item $(X'\cup\{y\})-\{x\}\not\in {\mathcal S}.$
\end{enumerate}

\end{enumerate}
Then $\displaystyle \sum_{Y\in{\mathcal S}}(-1)^{|Y|}=0$.
\end{lemma}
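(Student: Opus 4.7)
The plan is to prove both cases by exhibiting a fixed-point-free involution on $\mathcal S$ (or on a suitable partition of $\mathcal S$) that reverses the parity of $|Y|$. Since the summand $(-1)^{|Y|}$ is the sign of a parity, such an involution pairs terms of opposite sign and kills the sum.

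First I would handle case (1), which is essentially a warm-up. Define $\phi\colon \mathcal S\to \mathcal S$ by $\phi(Y)=Y\triangle\{x\}$. If $x\in Y$, then $\phi(Y)=Y-\{x\}\in \mathcal S$ by hypothesis (1). If $x\notin Y$, then $\phi(Y)=Y\cup\{x\}\in \mathcal S$ by upward closure. Thus $\phi$ is a well-defined involution on $\mathcal S$ with $|\phi(Y)|=|Y|\pm 1$, and $\sum_{Y\in \mathcal S}(-1)^{|Y|}=0$.

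For case (2), the strategy is to partition $\mathcal S$ into the sets where the $x$-flip works and the sets where it fails, then use an auxiliary $y$-flip on the latter. Concretely, let
\begin{equation*}
\mathcal S_1=\{Y\in \mathcal S : Y-\{x\}\in \mathcal S\}, \qquad \mathcal S_2=\mathcal S\setminus \mathcal S_1.
\end{equation*}
On $\mathcal S_1$, I would show that $Y\mapsto Y\triangle\{x\}$ is a parity-reversing involution: for $Y\in \mathcal S_1$ with $x\in Y$ the image is $Y-\{x\}$, which lies in $\mathcal S_1$ because removing $x$ from it produces $Y-\{x\}$ again (trivially in $\mathcal S$); for $x\notin Y$, the image $Y\cup\{x\}$ is in $\mathcal S$ by upward closure and visibly in $\mathcal S_1$. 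Hence $\sum_{Y\in \mathcal S_1}(-1)^{|Y|}=0$.

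On $\mathcal S_2$, I would instead use the involution $Y\mapsto Y\triangle\{y\}$. Note first that every $Y\in \mathcal S_2$ contains $x$ (else $Y-\{x\}=Y\in \mathcal S$). The key verifications, both of which use hypothesis (2), are that this $y$-flip preserves $\mathcal S_2$: if $y\in Y$ then $Y-\{y\}\in \mathcal S$ by (2a), and if $(Y-\{y\})-\{x\}$ were in $\mathcal S$ upward closure would put $Y-\{x\}$ in $\mathcal S$, contradicting $Y\in \mathcal S_2$; if $y\notin Y$ then $Y\cup\{y\}\in \mathcal S$ by upward closure, and $(Y\cup\{y\})-\{x\}\notin \mathcal S$ is exactly (2b). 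Thus in both subcases the image lies in $\mathcal S_2$, and the map is an involution of opposite parity. Combining the two partial sums gives the claim.

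The main obstacle is purely bookkeeping: one must be careful that conditions (2a) and (2b) are the precise statements needed to make the $y$-flip stay inside $\mathcal S_2$ in both directions, and in particular that (2b) is what keeps the enlargement $Y\cup\{y\}$ from accidentally falling into $\mathcal S_1$. Once the partition $\mathcal S=\mathcal S_1\sqcup \mathcal S_2$ is isolated, the rest is a routine sign-reversing involution check.
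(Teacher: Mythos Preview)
Your proof is correct and takes essentially the same approach as the paper's: the paper also splits $\mathcal S$ according to whether removing $x$ stays in $\mathcal S$, cancels the ``good'' part via $Y\leftrightarrow Y\cup\{x\}$, and then cancels the remainder (your $\mathcal S_2$, which coincides with the paper's) via $Y\leftrightarrow Y\cup\{y\}$. The only difference is cosmetic---you phrase both cancellations as sign-reversing involutions $Y\mapsto Y\triangle\{x\}$ and $Y\mapsto Y\triangle\{y\}$, whereas the paper writes them as explicit bijections between complementary subfamilies.
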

\begin{proof}
Let ${\mathcal S}_0$ be the family of those subsets from $\mathcal S$ that do not contain $x$ and let
${\mathcal S}_1$ the family of those subsets that
contain $x$. The map $t:Y\mapsto Y\cup\{x\}$ is an injection from ${\mathcal S}_0$ to ${\mathcal S}_1$. Let
 ${\mathcal S}_{2}={\mathcal S}_1-t({\mathcal S}_0)$. We have
 \[\sum_{Y\in{\mathcal S}}(-1)^{|Y|}=\sum_{Y\in{\mathcal S}_0}(-1)^{|Y|}+\sum_{Y\in t({\mathcal S}_0)}(-1)^{|Y|}+\sum_{Y\in{\mathcal S}_{2}}(-1)^{|Y|}.\]
Since $|t(Y)|=1+|Y|$, the first two sums on the right annihilate each other, and so $\sum_{Y\in{\mathcal S}}(-1)^{|Y|}=
 \sum_{Y\in{\mathcal S}_{2}}(-1)^{|Y|}$.

The condition $(1)$ of the lemma exactly means that ${\mathcal S}_{2}$ is empty, hence $\sum_{Y\in{\mathcal S}}(-1)^{|Y|}=0$.
If condition $(2)$ holds, we write ${\mathcal S}_2$ as a disjoint union ${\mathcal S}_2={\mathcal S}_{20}\cup{\mathcal S}_{21}$, where
${\mathcal S}_{20}$ consists of those elements of ${\mathcal S}_2$ which do not contain $y$. By b), the map $s:Y\mapsto Y\cup\{y\}$
maps ${\mathcal S}_{20}$ into ${\mathcal S}_{21}$ and a) implies that $s$ is onto. Thus $s:{\mathcal S}_{20}\longrightarrow {\mathcal S}_{21}$
is a bijection and
\[\sum_{Y\in{\mathcal S}_{2}}(-1)^{|Y|}=\sum_{Y\in{\mathcal S}_{20}}(-1)^{|Y|}+\sum_{Y\in{\mathcal S}_{21}}(-1)^{|Y|}=\]
\[=\sum_{Y\in{\mathcal S}_{20}}\left((-1)^{|Y|}+(-1)^{|s(Y)|}\right)=0.\]
\end{proof}
We apply of Lemma \ref{degree0} as follows. Given ${\Ag}\in \fg$, the set $X=X_{\Ag}$ will consists of all proper
non-trivial subspaces of
$\Fg_q^3$ fixed by $\Ag$ and
the family $\mathcal S=\mathcal S_{\Ag}$ will consist of all subsets $\{U_1,\ldots,U_k\}$ of $X$ such that
${\Ag}_{U_1}\cap\ldots \cap{\Ag}_{U_k}={\Ag}$.
If $(1)$ or $(2)$ holds for $\mathcal S_{\Ag}$, then Lemma \ref{degree0} tells us that $\d({\Ag})=0$.

Before we start the analysis of elements in $\fg$ let us recall that the dot product
$v\cdot w=v_1w_1+v_2w_2+v_2w_3$ is a non-degenerate symmetric bilinear form on $\Fg_q^3$.
The adjoint operator with respect to this bilinear form is the transposition. It follows that
if ${\Ag}_{U_1}\cap\ldots \cap{\Ag}_{U_k}={\Ag}\in \fg$ then
\[{\Ag}_{U_1^{\perp}}\cap\ldots \cap{\Ag}_{U_k^\perp}={\Ag^t}:=\{A^t:A\in {\Ag}\} \in \fg,\]
where $A^t$ is the transpose of $A$ and $U^{\perp}$ is the subspace orthogonal to $U$ with respect
to the dot product. We will often call ${\Ag^t}$ the dual of ${\Ag}$.
It is clear that ${\Ag}$ and ${\Ag^t}$ have the same number of elements and the same degree.

\begin{definition}{\rm
Let ${\Ag}\in \fg$. Then
\[ {\rm L}_{\Ag}=\{U:\dim U=1 \ \text{\rm and}\  \Ag\subseteq \Ag_U\}\]
is the set of all lines fixed by $\Ag$ and
\[ {\rm P}_{\Ag}=\{U:\dim U=2 \ \text{\rm and}\  \Ag\subseteq \Ag_U\}\]
is the set of all planes fixed by $\Ag$.}
\end{definition}
\noindent
Note that ${\rm L}_{\Ag^t}=\{\pi^\perp: \pi\in {\rm P}_{\Ag}\}$ and
${\rm P}_{\Ag^t}=\{l^\perp: l\in {\rm L}_{\Ag}\}$. Also, $X_{\Ag}={\rm L}_{\Ag}\cup {\rm P}_{\Ag}$.

\vspace{3mm}
\noindent
Consider an algebra ${\Ag}\in \fg$, $\Ag\neq \Dg$. Then $\Ag$ falls in exactly one of the following cases.

\vspace{3mm}
\noindent
{\bf Case I. ${\rm L}_{\Ag}$ contains three lines in general position.}
Recall that we say that three lines in $\Fg_q^3$ are in general position if they
are not contained in any plane. Dually, three planes are in general position if they
do not share any common line. Let $l_1,l_2,l_3\in {\rm L}_{\Ag}$ be three lines in
general position. Let $\pi_i$ be the plane spanned by $l_j$ and $l_k$, where
$\{i,j,k\}=\{1,2,3\}$.

\vspace{3mm}
\noindent
{\bf Subcase Ia. ${\rm L}_{\Ag}=\{l_1,l_2,l_3\}$.} In this case ${\rm P}_{\Ag}=\{\pi_1,\pi_2,\pi_3\}$.
The algebra $\Ag$ is conjugate to the algebra of all diagonal matrices. In particular, $|\Ag|=q^3$.
Furthermore, $X_{\Ag}={\rm L}_{\Ag}\cup {\rm P}_{\Ag}$ and a subset of $X_{\Ag}$ belongs to $\mathcal S_{\Ag}$
iff it contains one of the following sets: $\{l_1,l_2,l_3\}$, $\{\pi_1,\pi_2,\pi_3\}$, $\{l_1,l_2,\pi_1,\pi_2\}$,
$\{l_1,l_3,\pi_1,\pi_3\}$, $\{l_2,l_3,\pi_2,\pi_3\}$. Thus $\mathcal S_{\Ag}$ has 2 members of cardinality
$3$, nine members of cardinality $4$, six members of cardinality  $5$ and one element of cardinality $6$.
Therefore, $\d(\Ag)=-2+9-6+1=2$.

Note that the algebras in this subcase are in bijective correspondence with sets of three
lines in general position. Recall that $\Fg_q^3$ has $q^2+q+1$ lines, and each plane has $q+1$ lines.
It follows that the number of ordered triples of lines in general position is $(q^2+q+1)(q^2+q)q^2$.
Thus, the number of algebras in this subcase is $q^3(q+1)(q^2+q+1)/6$.
Consequently, the algebras in this subcase
contribute the quantity
\[3^{-1}q^{m+3}(q+1)(q^2+q+1)(q^{2m}-1)\]
to the sum $\displaystyle \sum_{{\Ag}\in \fg}\d({\Ag})\left(|{\Ag}|^m
- q^m\right)$.

\vspace{3mm}
\noindent
{\bf Subcase Ib. ${\rm L}_{\Ag}\supseteq \{l_1,l_2,l_3,l_4\}$, where $l_4$ is a line not contained in any
of the planes $\pi_1,\pi_2,\pi_3$.} In this case, by Lemma \ref{scaling}, we have $\Ag=\Dg$, and $\Ag$
does not contribute anything to the sum $\displaystyle \sum_{{\Ag}\in \fg}\d({\Ag})\left(|{\Ag}|^m
- q^m\right)$.

It remains to consider the case when ${\rm L}_{\Ag}$ contains a line $l_4$ which is contained in one
of the planes $\pi_1,\pi_2,\pi_3$. Changing the numbering if necessary, we may assume that $l_4$ belongs
to $\pi_1$. If there is a line $l_5$ (different from $l_1,\ldots,l_4$) which is contained in $\pi_2$, the planes
through $l_4,l_1$ and through $l_5, l_2$ intersect along a line $l_6$ which does not belong to any
of the planes $\pi_1,\pi_2,\pi_3$. Thus we are in Subcase Ib. The same argument shows that there is
no line in ${\rm L}_{\Ag}$ different from $l_1,\ldots,l_4$ and contained in $\pi_3$. Since $\Ag$ fixes three
different lines in $\pi_1$, it acts as a scalar on $\pi_1$ by Lemma \ref{scaling}. In particular,
${\rm L}_{\Ag}$ contains all the lines in $\pi_1$. We will write $\pi$ for $\pi_1$ and $l$ for $l_1$.
We see that all the remaining algebras in Case I fall in the following subcase.

\vspace{3mm}
\noindent
{\bf Subcase Ic. ${\rm L}_{\Ag}=\{l\}\cup \{\text{\rm all lines in}\ \pi\}$.}
It is easy to see that in this case ${\rm P}_{\Ag}=\{\pi\}\cup \{\text{\rm all planes through}\ l\}$.
We will show that $\d(\Ag)=0$ by applying Lemma \ref{degree0} to $X=X_{\Ag}$, $\mathcal S=\mathcal S_{\Ag}$.
We need to verify that $x=l$, $y=\pi$ satisfy condition $(2)$. Suppose that $X'\in \mathcal S_{\Ag}$
and $X'-\{l\}\not\in \mathcal S_{\Ag}$. We claim that $X'$ contains at most one plane through $l$.
For suppose otherwise, that there are two planes containing $l$ in $X'$. Their
intersection is $l$. Thus a matrix fixes all elements of $X'-\{l\}$ if and only if
it fixes all elements of $X'$, i.e. $X'-\{l\}\in \mathcal S_{\Ag}$,  a contradiction. This proves
that indeed $X'$ contains at most one plane different from
$\pi$. We claim that $X'$ contains at lest two lines contained in $\pi$. Otherwise, there would be at most
one such line in $X'$, so $X'$ would be a subset of a set of the form $\{l,l',\pi,\pi'\}$ for some line
$l'$ contained in $\pi$
and some plane $\pi'$ containing $l$. Thus $\Ag$ would contain the algebra
$\Ag'=\Ag_l\cap \Ag_{l'}\cap\Ag_{\pi}\cap\Ag_{\pi'}$. This is however not possible, since $\Ag'$ has an element
which is not a scalar on $\pi$ and all elements of $\Ag$ act as scalars on $\pi$.
Indeed, if the line $l''=\pi\cap\pi'$ is different from $l'$ then $\Ag'=\Ag_l\cap \Ag_{l'}\cap\Ag_{l^{''}}$
and it contains the matrix which is identity on $l$ and $l'$ and is $0$ on $l''$. If $l''=l'$ then
$\Ag'=\Ag_l\cap \Ag_{l'}\cap\Ag_{\pi}$ contains the algebra $\Ag_l\cap \Ag_{l'}\cap\Ag_{l_1^{'}}$
for any line $l_1^{'}$ in $\pi$ which is different from $l'$.

Thus there are two lines in $X'$ which are contained in $\pi$.
These two lines span $\pi$, so $X'-\{\pi\}\in \mathcal S_{\Ag}$. Also, $(X'\cup \{\pi\})-\{l\}$
and $X'-\{l\}$ are fixed by the same set of matrices, so $(X'\cup\{\pi\})-\{l\}\not \in \mathcal S_{\Ag}$.
This verifies condition $(2)$ of Lemma \ref{degree0}, so $\d(\Ag)=0$. Consequently,
the algebras of Subcase Ic do not contribute anything to the sum
$\displaystyle \sum_{{\Ag}\in \fg}\d({\Ag})\left(|{\Ag}|^m- q^m\right)$.

\vspace{3mm}
Note that if ${\rm P}_{\Ag}$ contains three planes in general position, then the three lines obtained
by intersecting pairs of these planes are in general position and belong to ${\rm L}_{\Ag}$.
Thus from now on we assume that ${\rm L}_{\Ag}$ does not contain three lines
in general position and that ${\rm P}_{\Ag}$ does not contain three planes in general position.
If ${\rm L}_{\Ag}$ contains more than two elements, then all of the lines in ${\rm L}_{\Ag}$
must be contained in some plane $\pi$ and then, by Lemma \ref{scaling}, ${\rm L}_{\Ag}=\{\text{all lines in
$\pi$}\}$. Similarly, by duality, if ${\rm P}_{\Ag}$ contains more than two elements, then all the planes in
${\rm P}_{\Ag}$ share a common line $l$ and ${\rm P}_{\Ag}= \{\text{all planes which contain $l$}\}$.
This leads to the following two cases:

\vspace{3mm}
\noindent
{\bf Case II. There is a plane $\pi$ such that ${\rm L}_{\Ag}=\{\text{all lines in
$\pi$}\}$}. By Lemma \ref{scaling}, every element of $\Ag$ acts as a scalar on $\pi$. In particular,
$\pi\in {\rm P}_{\Ag}$. Note that all the planes in ${\rm P}_{\Ag}$ must share
a common line $l$ (if ${\rm P}_{\Ag}=\{\pi\}$, pick any line in $\pi$ for $l$).
In fact, suppose that there are $\pi_1,\pi_2 \in {\rm P}_{\Ag}$ such that the lines
$\pi\cap \pi_1$ and $\pi\cap \pi_2$ are different. Then the line $\pi_1\cap \pi_2$ belongs
to ${\rm L}_{\Ag}$ and it is not contained in $\pi$, which is not possible. Thus,
${\rm P}_{\Ag}\subseteq \{\text{all planes which contain $l$}\}$. We claim that any
$X\in \mathcal S_{\Ag}$ contains at lest two lines in $\pi$ different from $l$.
In fact, if the lines in $X$ are contained in $\{l,l_1\}$  then consider a
plane $\pi_1$ which does not contain $l$ but contains $\l_1$. There is a matrix
$A$ which is 0 on $l$ and is the identity on $\pi_1$ and this matrix fixes every plane passing through
$l$. Thus $A$ fixes all elements of $X$, yet $A$ is not a scalar on $\pi$. This means that $A\not\in \Ag$,
and consequently $X\not\in S_{\Ag}$, a contradiction. Now any two lines in $X$ span $\pi$. It follows
that any matrix which fixes all elements of $X-\{\pi\}$ also fixes $\pi$, i.e. $X-\{\pi\}\in S_{\Ag}$.
This means that he family $\mathcal S_{\Ag}$ of subsets of $X_{\Ag}$ satisfies the assumptions of Lemma \ref{degree0},
case $(1)$, with $x=\pi$. It follows that $\d(\Ag)=0$ and the algebras in this case do not contribute anything to
$\displaystyle \sum_{{\Ag}\in \fg}\d({\Ag})\left(|{\Ag}|^m- q^m\right)$.

\vspace{3mm}
\noindent
{\bf Case III. There is a lane $l$ such that ${\rm P}_{\Ag}=\{\text{all planes through $l$}\}$.}
Any algebra in this case is dual to an algebra in Case II, hence it has degree 0. Thus
algebras in this case do not contribute anything to
$\displaystyle \sum_{{\Ag}\in \fg}\d({\Ag})\left(|{\Ag}|^m- q^m\right)$.

\vspace{3mm}
\noindent
It remains to analyze algebras $\Ag$ such that both ${\rm L}_{\Ag}$ and ${\rm P}_{\Ag}$
have at most two elements.

\vspace{3mm}
\noindent
{\bf Case IV. $|{\rm L}_{\Ag}|=2=|{\rm P}_{\Ag}|$.} We may assume that ${\rm L}_{\Ag}=\{l,l'\}$
and ${\rm P}_{\Ag}=\{\pi,\pi'\}$, where $\pi'$ is spanned by $l,l'$ and $\pi\cap \pi'=l$.
It is easy to see that the family $\mathcal S_{\Ag}$ has three elements: $\{l,l',\pi\}$, $\{l',\pi,\pi'\}$,
and $\{l,l',\pi,\pi'\}$. Thus $\d(\Ag)=-2+1=-1$. Choosing non-zero vectors $v_1\in l'$, $v_2\in l$, and
$v_3\in \pi-l$ we get a basis of $\Fg_q^3$ and $A\in \Ag$ if and only if the matrix of the linear
transformation given by $A$, expressed in the basis $v_1,v_2,v_3$, has the form $\displaystyle \left(
\begin{smallmatrix}
* & 0 & 0\\ 0 & * & *\\ 0 & 0& *
\end{smallmatrix} \right) $.
In other words, $\Ag$ is conjugate to the algebra of all the matrices of the form
$\displaystyle \left(\begin{smallmatrix}
* & 0 & 0\\ 0 & * & *\\ 0 & 0& *
\end{smallmatrix} \right) $. In particular, $|\Ag|=q^4$. To count the number of algebras in Case IV,
note that these algebras are in bijective correspondence with triples $l,l',\pi$, where $\pi$ is
a plane and $l,l'$ are lines such that $l\subset \pi$ and $l'\not\subset \pi$. There are $q^2+q+1$
choices for $\pi$ and for each $\pi$ we have $q+1$ choices of $l$ and $q^2$ choices of $l'$. Thus the number
of algebras in Case IV is $q^2(q+1)(q^2+q+1)$. Consequently, the algebras in this case contribute
\[-q^{m+2}(q+1)(q^2+q+1)(q^{3m}-1)\]
to $\displaystyle \sum_{{\Ag}\in \fg}\d({\Ag})\left(|{\Ag}|^m- q^m\right)$.

\vspace{3mm}
\noindent
{\bf Case V. $|{\rm L}_{\Ag}|=2$ and $|{\rm P}_{\Ag}|=1$.} Thus ${\rm L}_{\Ag}=\{l,l'\}$
and ${\rm P}_{\Ag}=\{\pi\}$, where $\pi$ is spanned by $l,l'$. It is straightforward to see that
$\mathcal S_{\Ag}$ has two elements: $\{l,l'\}$ and  $\{l,l',\pi\}$. It follows that $\d(\Ag)=0$
and therefore algebras in this case contribute nothing to
$\displaystyle \sum_{{\Ag}\in \fg}\d({\Ag})\left(|{\Ag}|^m- q^m\right)$.

\vspace{3mm}
\noindent
{\bf Case V$^\perp$. $|{\rm L}_{\Ag}|=1$ and $|{\rm P}_{\Ag}|=2$.}
Algebras in this case are dual to algebras in Case V, so they have degree 0 and
contribute nothing to
$\displaystyle \sum_{{\Ag}\in \fg}\d({\Ag})\left(|{\Ag}|^m- q^m\right)$.

\vspace{3mm}
\noindent
{\bf Case VI. ${\rm L}_{\Ag}=\{l\}$ and ${\rm P}_{\Ag}=\{\pi\}$, where $l\not\subset \pi$.}
It is clear that $\mathcal S_{\Ag}$ has exactly one element: $\{l,\pi\}$. Thus $\d(\Ag)=1$.
Choosing a basis $v_1$ of $l$ and $v_2,v_3$ of $\pi$ we easily see that $\Ag$ is conjugate
to the algebra of all the matrices of the form
$\displaystyle \left(\begin{smallmatrix}
* & 0 & 0\\ 0 & * & *\\ 0 & *& *
\end{smallmatrix} \right) $. In particular, $|\Ag|=q^5$. To count the number of algebras in Case VI,
note that these algebras are in bijective correspondence with pairs $l,\pi$, where $\pi$ is
a plane and $l$ is a line not contained in $ \pi$. There are $q^2+q+1$
choices for $\pi$ and for each $\pi$ we have $q^2$ choices of $l$. Thus the number
of algebras in Case VI is $q^2(q^2+q+1)$. Consequently, the algebras in this case contribute
\[q^{m+2}(q^2+q+1)(q^{4m}-1)\]
to
$\displaystyle \sum_{{\Ag}\in \fg}\d({\Ag})\left(|{\Ag}|^m- q^m\right)$.

\vspace{3mm}
\noindent
{\bf Case VII. ${\rm L}_{\Ag}=\{l\}$ and ${\rm P}_{\Ag}=\{\pi\}$, where $l\subset \pi$.}
It is clear that $\mathcal S_{\Ag}$ has exactly one element: $\{l,\pi\}$. Thus $\d(\Ag)=1$.
Choosing a basis $v_1$ of $l$, $v_1,v_2$ of $\pi$, and a vector $v_3\not\in\pi$, we easily see that
$\Ag$ is conjugate
to the algebra of all the matrices of the form
$\displaystyle \left(\begin{smallmatrix}
* & * & *\\ 0 & * & *\\ 0 & 0 & *
\end{smallmatrix} \right) $. In particular, $|\Ag|=q^6$. To count the number of algebras in Case VII,
note that these algebras are in bijective correspondence with pairs $l,\pi$, where $\pi$ is
a plane and $l$ is a line contained in $ \pi$. There are $q^2+q+1$
choices for $\pi$ and for each $\pi$ we have $q+1$ choices of $l$. Thus the number
of algebras in Case VII is $(q+1)(q^2+q+1)$. Consequently, the algebras in this case contribute
\[q^{m}(q+1)(q^2+q+1)(q^{5m}-1)\]
to $\displaystyle \sum_{{\Ag}\in \fg}\d({\Ag})\left(|{\Ag}|^m- q^m\right)$.

\vspace{3mm}
\noindent
{\bf Case VIII. ${\Ag}={\Ag}_{l}$ for some line $l$.} The family
$\mathcal S_{\Ag}$ has exactly one element: $\{l \}$, so $\d(\Ag)=-1$.
It is easy to see that $\Ag$ is conjugate to the algebra of all the matrices of the form
$\displaystyle \left(\begin{smallmatrix}
* & * & *\\ 0 & * & *\\ 0 & * & *
\end{smallmatrix} \right) $. In particular, $|\Ag|=q^7$. Algebras in this case are in bijection with
lines, so we have $q^2+q+1$ such algebras. Thus the algebras in this case contribute
\[-q^m(q^2+q+1)(q^{6m}-1)\]
to $\displaystyle \sum_{{\Ag}\in \fg}\d({\Ag})\left(|{\Ag}|^m- q^m\right)$.

\vspace{3mm}
\noindent
The following case is the last to consider.

\vspace{3mm}
\noindent
{\bf Case VIII$^\perp$. ${\Ag}={\Ag}_{\pi}$ for some plane $\pi$.} This case consists of algebras
dual to algebras of Case VIII, so they also contribute
\[-q^m(q^2+q+1)(q^{6m}-1)\]
to $\displaystyle \sum_{{\Ag}\in \fg}\d({\Ag})\left(|{\Ag}|^m- q^m\right)$.

\vspace{3mm}
\noindent
Putting together all the contributions to $\displaystyle \sum_{{\Ag}\in \fg}\d({\Ag})\left(|{\Ag}|^m- q^m\right)$
we arrive at the following formula:
\begin{eqnarray} \nonumber
\sum_{{\Ag}\in \fg}\d({\Ag})\left(|{\Ag}|^m- q^m\right)=
3^{-1}q^{m+3}(q+1)(q^2+q+1)(q^{2m}-1)-\\ \nonumber  q^{m+2}(q+1)(q^2+q+1)(q^{3m}-1)+
q^{m+2}(q^2+q+1)(q^{4m}-1)+\\ \nonumber  q^{m}(q+1)(q^2+q+1)(q^{5m}-1)-2q^m(q^2+q+1)(q^{6m}-1).
\end{eqnarray}
After inserting this into (\ref{preparation1}) and simplifying we arrive at the following formula for $\g_{m,3}(q)$:
\begin{eqnarray} \label{n=3}
\g_{m,3}(q)= q^{3m+4}(q^{m-1}-1)(q^{m-1}+1)(q^m-1)\times \\ \nonumber (q^{3m-2}+q^{2m-2}-q^{m}-2q^{m-1}-q^{m-2}+q+1).
\end{eqnarray}

\subsection{Lower bound for $\g_{m,n}(q)$}
So far we have been unable to obtain exact formulas for $\g_{m,n}(q)$ for any $n\geq 4$. We have
however the following lower bound.

\begin{pro}\label{lowerbound} Let $m,n$ be positive integers and let $q$ be a power of a prime number. Then
\begin{equation}
\g_{m,n}(q)\geq q^{mn^2}-2^{\frac{n+6}{2}}q^{n^2m-(m-1)(n-1)}.
\end{equation}
\end{pro}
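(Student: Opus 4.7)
The plan is to bound the number of non-generating $m$-tuples. By (\ref{avoidmax}), any such tuple lies in $\mathcal{A}^m$ for some maximal subalgebra $\mathcal{A}$ of $\M_n(\mathbb{F}_q)$, so
\[ q^{mn^2} - \g_{m,n}(q) \;\leq\; \sum_{\mathcal{A}\text{ maximal}} |\mathcal{A}|^m. \]
By Proposition \ref{clasmax}, these maximal subalgebras split into two families: stabilizers $\mathcal{A}_U$ of a proper nontrivial subspace $U \subset \mathbb{F}_q^n$, and subalgebras isomorphic to $\M_{n/s}(\mathbb{F}_{q^s})$ for some prime $s\mid n$. I will bound the two contributions separately. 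The case $m=1$ or $n=1$ is trivial since the claimed right-hand side is already nonpositive, so we may assume $m, n \geq 2$ and write $E = mn^2 - (m-1)(n-1)$.

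For the first family, a basis adapted to $U$ shows $|\mathcal{A}_U| = q^{n^2 - k(n-k)}$ with $k = \dim U$, and the standard bound $\binom{n}{k}_q \leq C\, q^{k(n-k)}$ holds with $C = \prod_{j\geq 1}(1 - q^{-j})^{-1} < 4$. Combined with the identity $k(n-k) - (n-1) = (k-1)(n-k-1)$, this gives
\[ \sum_{k=1}^{n-1} \binom{n}{k}_q\, q^{m(n^2 - k(n-k))} \;\leq\; C\, q^E \sum_{k=1}^{n-1} q^{-(m-1)(k-1)(n-k-1)}. \]
The inner sum is bounded by an absolute constant (for $m, q \geq 2$ the endpoints $k=1, n-1$ each contribute $1$, while the remaining terms form a rapidly decaying geometric-type tail because $(k-1)(n-k-1)$ grows away from the endpoints), so this family contributes at most a fixed constant multiple of $q^E$.

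For the second family, Lemma \ref{fieldmax} yields $N_s \leq s^{-1} q^{n^2(s-1)/s}$, and since $|\mathcal{A}|^m = q^{mn^2/s}$, the contribution from a fixed prime $s$ is at most $s^{-1} q^{n^2 + (m-1)n^2/s}$. A short computation shows this exponent is $\leq E$ for $m, s, n \geq 2$, since $E - (n^2 + (m-1)n^2/s) = (m-1)[n^2(s-1)/s - (n-1)]$ and $n^2(s-1)/s \geq n^2/2 \geq n-1$. Summing over the at most $\log_2 n$ primes dividing $n$ contributes at most a logarithmic multiple of $q^E$.

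Adding the two estimates, the total count of non-generating tuples is bounded by $f(n)\,q^E$ with $f(n)$ of at most logarithmic growth in $n$. The final inequality $f(n) \leq 2^{(n+6)/2}$ is verified by direct check for small $n$ (using the sharper per-$n$ form of the estimates above) and follows from the trivial exponential-beats-logarithmic comparison for large $n$. The main obstacle is purely bookkeeping: the two families yield bounds of different structure, so one must arrange the constants compatibly in order to absorb everything under the single exponential threshold $2^{(n+6)/2}$.
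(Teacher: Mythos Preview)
Your approach is essentially the same as the paper's: both start from the union bound $q^{mn^2}-\g_{m,n}(q)\le\sum_{\mathcal A}|\mathcal A|^m$, split the maximal subalgebras via Proposition~\ref{clasmax} into stabilizers $\mathcal A_U$ and field-type subalgebras $\M_{n/s}(\mathbb F_{q^s})$, and bound each family separately using the Gaussian binomial count and Lemma~\ref{fieldmax}.

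The difference lies entirely in the bookkeeping of constants, and here your write-up has a gap. The paper keeps explicit track throughout: it bounds the Gaussian binomial via $\frac{q^n-q^i}{q^k-q^i}\le q^{n-k}\cdot\frac{q}{q-1}\le 2q^{n-k}$, getting $S_k\le 2^k q^{n^2m-(m-1)k(n-k)}$; then symmetry $S_k=S_{n-k}$ and $k(n-k)\ge n-1$ give $\Sigma_a\le 2\sum_{k\le n/2}2^k q^E\le 2^{(n+4)/2}q^E$. For the field-type family it shows $T_s\le s^{-1}q^{n^2(m+1)/2}\le s^{-1}q^E$ and uses the (very crude) bound $\sum_{s\mid n}s^{-1}\le 2^{(n+4)/2}$, so $\Sigma_b\le 2^{(n+4)/2}q^E$ as well, and the two halves add to $2^{(n+6)/2}q^E$. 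By contrast, you obtain ``$C\cdot(\text{absolute constant})$'' for the first family and ``logarithmic in $n$'' for the second, then assert that the resulting $f(n)$ satisfies $f(n)\le 2^{(n+6)/2}$ by an unperformed ``direct check for small $n$'' plus ``exponential beats logarithmic.'' That is not a proof of the stated inequality with its specific constant: you have not written down $f(n)$ explicitly, so there is nothing concrete to check.

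The fix is easy with your own estimates. Your sum $\sum_{k=1}^{n-1}q^{-(m-1)(k-1)(n-k-1)}$ is in fact at most $2+(n-3)2^{-(n-3)}\le 5/2$ for $m,q\ge 2$ (since $(k-1)(n-k-1)\ge n-3$ for $2\le k\le n-2$), so the stabilizer family contributes at most $4\cdot\tfrac52=10$ times $q^E$; and $\sum_{s\mid n}s^{-1}<4$ unless $n$ is astronomically large (and then $2^{(n+6)/2}$ dominates trivially). Hence $f(n)<14\le 16\le 2^{(n+6)/2}$ for all $n\ge 2$. Adding one such line would close the gap.
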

\begin{proof}
By (\ref{avoidmax}), we have the following inequality
\[\g_{m,n}(q)\geq q^{mn^2}-\sum |\Ag|^m,\]
where the sum is taken over all maximal subalgebras $\Ag$ of $\mt{n}$. We use the description of maximal
subalgebras given by Proposition \ref{clasmax}.
Let $1\leq k<n$. The number of $k$-dimensional subspaces of $\Fg_q^n$ is
$\prod_{i=0}^{k-1}(q^n-q^{i})\prod_{i=0}^{k-1}(q^k-q^{i})^{-1}$. For any such subspace $V$ the algebra
$\Ag_V$ has $q^{n^2-nk+k^2}$ elements. Let $S_k=\sum |\Ag_V|^m$, where the sum
is taken over all $k$-dimensional subspaces $V$ of $\Fg_q^n$. It follows that
\[S_k =q^{(n^2-nk+k^2)m}\prod_{i=0}^{k-1}(q^n-q^{i})\prod_{i=0}^{k-1}(q^k-q^{i})^{-1}.
\]
Using the inequality  $\displaystyle \frac{q^n-q^i}{q^k-q^i}\leq q^{n-k}\frac{q}{q-1}$ we get
\[S_k\leq q^{n^2m-(m-1)k(n-k)}\left(\frac{q}{q-1}\right)^k.\]
Note that $S_k=S_{n-k}$ (by duality). Since $\frac{q}{q-1}\leq 2$ and $k(n-k)\geq n-1$, we have
\[\Sigma_a:=\sum_{k=1}^{n-1}S_k\leq 2 \sum_{k=1}^{\lfloor \frac{n}{2}\rfloor}S_k\leq
2^{\frac{n+4}{2}}q^{n^2m-(m-1)(n-1)}.\]
For a prime divisor $s$ of $n$ define $T_s$ as the sum $\sum|\Ag|^m$, where the sum is over all
subalgebras of $\mt{n}$ isomorphic to $\mx{n/s}{s}$. By \ref{fieldmax}, we have
\[T_s=q^{\frac{n^2}{s}m}\cdot s^{-1}\cdot \prod_{s\nmid i, 1\leq i<n}(q^n-q^i)\leq s^{-1}\cdot q^{\frac{n^2}{s}m}\cdot
q^{n(n-\frac{n}{s})}\leq s^{-1}\cdot q^{n^2\frac{m+1}{2}}.\]
Let $\Sigma_b=\sum T_s$, where the sum is over all prime divisors $s$ of $n$.
It is easy to see that the sum $\sum s^{-1}$ of all reciprocals of prime divisors of $n$
does not exceed $2^{(n+4)/2}$. Furthermore, $q^{n^2(m+1)/2}\leq q^{n^2m-(m-1)(n-1)}$.
It follows that
\[ \Sigma_b\leq 2^{\frac{n+4}{2}}q^{n^2m-(m-1)(n-1)}.\]
By Proposition \ref{clasmax} we have
$\Sigma_a+\Sigma_b=\sum |\Ag|^m$,
where the sum is taken over all maximal subalgebras $\Ag$ of $\mt{n}$. Thus,
\[ \g_{m,n}(q)\geq q^{mn^2}-2^{\frac{n+6}{2}}q^{n^2m-(m-1)(n-1)}.\]
\end{proof}
As an immediate consequence of Proposition \ref{lowerbound} we get the following corollary.

\begin{corollary}\label{infinity}
Let $m,n \geq 2$. The probability that $m$ matrices
in $\M_n \left( \mathbb{F}_q \right)$, chosen under the uniform
distribution, generate the $\mathbb{F}_q$-algebra $\M_n \left( \mathbb{F}_q \right)$
tends to $1$ as $q+m+n \to \infty$.
\end{corollary}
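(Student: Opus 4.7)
The probability in question is exactly $\g_{m,n}(q)/q^{mn^2}$. My plan is to deduce the corollary directly from Proposition~\ref{lowerbound}: dividing the bound there by $q^{mn^2}$ yields
\[
\frac{\g_{m,n}(q)}{q^{mn^2}} \;\geq\; 1 \;-\; 2^{(n+6)/2}\, q^{-(m-1)(n-1)},
\]
so it suffices to show that the error $E := 2^{(n+6)/2}\, q^{-(m-1)(n-1)}$ tends to $0$ as $q+m+n\to\infty$ with $m,n \geq 2$ (and $q\geq 2$, since $q$ is a prime power).

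The hypotheses $m,n \geq 2$ let me control $(m-1)(n-1)$ from below: the elementary identity $(m-1)(n-1) = (m-2)(n-2) + (m+n-3)$ gives $(m-1)(n-1) \geq m+n-3$, and in particular $(m-1)(n-1) \geq 1$. Combined with $\log_2 q \geq 1$, this yields
\[
-\log_2 E \;\geq\; (m+n-3)\log_2 q \;-\; \tfrac{n+6}{2}.
\]

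I would finish by splitting into two regimes. If $m+n$ stays bounded, then $q+m+n\to\infty$ forces $q\to\infty$, and since $m+n-3\geq 1$ while $(n+6)/2$ is bounded, the right-hand side tends to $+\infty$. If instead $m+n\to\infty$, then discarding $\log_2 q \geq 1$ gives $-\log_2 E \geq (m+n-3) - (n+6)/2 = m + n/2 - 6$, which likewise tends to $+\infty$. In either regime $E\to 0$, so the lower bound on the probability tends to $1$, which forces the probability itself to tend to $1$.

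There is no genuine obstacle here — the corollary is essentially formal once Proposition~\ref{lowerbound} is available. The only point one must check with some care is the asymptotic regime in which $m$ stays equal to $2$ while $n\to\infty$, since then the factor $2^{(n+6)/2}$ in the numerator is not negligible and must be beaten by $q^{(m-1)(n-1)} \geq 2^{\,n-1}$; but the elementary comparison $(n-1) - (n+6)/2 = (n-8)/2 \to \infty$ makes this comfortable.
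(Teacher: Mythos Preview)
Your proof is correct and follows exactly the approach the paper intends: the paper states the corollary as an ``immediate consequence'' of Proposition~\ref{lowerbound} without further argument, and you have simply supplied the routine verification that the error term $2^{(n+6)/2}q^{-(m-1)(n-1)}$ tends to $0$ under the stated hypothesis. Your care with the regime $m=2$, $n\to\infty$ is appropriate, since that is the only case where the bound is at all delicate.
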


\noindent
Corollary~\ref{infinity} proves and vastly generalizes the conjectural formula (17)
on. page~27 of \cite{pet-sid}.

\section{Finite products of matrix algebras over rings of algebraic integers}\label{numfield}
Let $R$ be the ring of integers in a number field $K$. In this final section we apply the techniques developed
in our paper to investigate generators of $R$-algebras $A$ which are
products of a finite number of matrix algebras over $R$. Thus we have
\[ A\cong \prod_{i=1}^{s} \M_{n_i}(R)^{m_i},\]
where $1\leq n_1<n_2<\ldots<n_s$ and $m_i$ are positive integers.
As we have seen in Example \ref{matrixproduct},
the algebra $A$ is $k$-generated if and only if all the algebras $\M_{n_i}(R)^{m_i}$ are
$k$-generated. Thus, we may and will focus on the case when $A\cong \M_{n}(R)^{m}$ for some
positive integers $n,m$. We have the following theorem.

\begin{theorem}\label{minnumbergen} Let $R$ be the ring of integers in a number field $K$.
Suppose that either $n\geq 3$ or $k\geq 3$ and let $A=\M_{n}(R)^{m}$ for some positive integer $m$.
Then the following conditions are equivalent.
\begin{enumerate}[\rm (i)]
\item The $R$-algebra $A$ admits $k$ generators.

\item For every maximal ideal $\f p$ of $R$ the $R/\f p$-algebra $\M_{n}(R/\f p)^{m}$ admits $k$ generators.

\item The density $\den_k(A)$ is positive.
\end{enumerate}
Furthermore, the following formulas, in which $\zeta_K$ denotes the Dedekind zeta-function of $K$, hold for
every $k\geq 2$.
\begin{enumerate}[\rm (a)]
\item $\den_2(\M_2(R)^m)=0$ for every $m$.

\item $\displaystyle \den_k(\M_2(R))=\frac{1}{\zeta_K(k-1)\zeta_K(k)}$.

\item $\displaystyle \den_k(\M_3(R))=\frac{1}{\zeta_K(2k-2)\zeta_K(k)}\prod_{\f p\in \mspec R}\left(1+
\frac{\phi_k(\N(\f p))}{\N(\f p)^{3k-2}}\right)$, where $\phi_k(x)=x^{2k-2}-x^{k}-2x^{k-1}-x^{k-2}+x+1$.
\end{enumerate}
\end{theorem}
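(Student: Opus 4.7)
The theorem splits naturally into the equivalence (i)$\iff$(ii)$\iff$(iii) and the three closed-form formulas. For the equivalence, the implications (iii)$\Rightarrow$(i) (positive density forces $\Gen_k(A)\neq\emptyset$) and (i)$\Rightarrow$(ii) (reduction of generators modulo $\f p$) are immediate, so the real content is (ii)$\Rightarrow$(iii). My plan is to combine Theorem \ref{gendensity} with Example \ref{matrixproduct} and Theorem \ref{formula} (taken with $s=1$, since $A\otimes_R R/\f p\cong\M_n(R/\f p)^m$) to write
\[
\den_k(A)=\prod_{\f p\in\mspec R}\prod_{i=0}^{m-1}\frac{\g_{k,n}(\N(\f p))-i\,|\PGL_n(\F_{\N(\f p)})|}{\N(\f p)^{n^2k}}.
\]
Condition (ii) says precisely that every local factor is positive, so what remains is to establish strict positivity of the infinite product.

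Using Proposition \ref{lowerbound} together with the easy estimate $|\PGL_n(\F_q)|\leq q^{n^2-1}$, I expect to obtain, for all $\f p$ of sufficiently large norm, a bound
\[
\frac{g_k(\f p,A)}{\N(\f p)^{mn^2k}}\geq 1-C\,\N(\f p)^{-(k-1)(n-1)},
\]
where $C$ depends only on $m,n,k$. The Dirichlet series $\sum_{\f p}\N(\f p)^{-(k-1)(n-1)}$ converges exactly when $(k-1)(n-1)>1$, and the standing hypothesis ``$n\geq 3$ or $k\geq 3$'' is precisely what guarantees $(k-1)(n-1)\geq 2$. This secures positivity of the Euler product and completes (ii)$\Rightarrow$(iii).

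The formulas (a), (b), (c) then follow by substituting the closed forms \eqref{n=2} and \eqref{n=3} for $\g_{k,n}(q)$ into Theorem \ref{gendensity} and regrouping the local factors as Euler products. For (b), the local factor at $\f p$ with $q=\N(\f p)$ simplifies to $(1-q^{-(k-1)})(1-q^{-k})$, giving $\zeta_K(k-1)^{-1}\zeta_K(k)^{-1}$. For (c), dividing $\g_{k,3}(q)$ by $q^{9k}$ produces $(1-q^{-(2k-2)})(1-q^{-k})(1+\phi_k(q)/q^{3k-2})$, which yields the stated Euler product formula. For (a) with $m=1$ the local factor is $(1-q^{-1})(1-q^{-2})$, so the Euler product equals $\zeta_K(1)^{-1}\zeta_K(2)^{-1}=0$ because $\zeta_K$ has a simple pole at $s=1$; for $m\geq 2$ the same conclusion follows by bounding each local factor termwise above by its $m=1$ analogue.

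The main technical point is the convergence estimate in the second paragraph. The correction terms $i\,|\PGL_n(\F_q)|$ arising from Theorem \ref{formula} contribute at most $O(q^{n^2-1})$ per factor, which after division by $q^{kn^2}$ gives an error of order $q^{-1-(k-1)n^2}$; under the stated hypothesis this is dominated by $q^{-(k-1)(n-1)}$, so Proposition \ref{lowerbound} supplies the right exponent after raising to the $m$-th power via $(1-x)^m\geq 1-mx$, and the remaining evaluation of the formulas reduces to elementary manipulation of Euler products.
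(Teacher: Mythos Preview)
Your proposal is correct and follows the paper's strategy closely. The one genuine difference is your treatment of (ii)$\Rightarrow$(iii): the paper splits into the case $k\ge 3$, which it dispatches by invoking Theorem~\ref{denslenstra} (using that $\M_n(K)^m$ is $2$-generated, so $r_0\le 2$), and the case $k=2$, $n\ge 3$, which it handles directly via Proposition~\ref{lowerbound}. You instead run Proposition~\ref{lowerbound} uniformly and observe that, for $k,n\ge 2$, the hypothesis ``$n\ge 3$ or $k\ge 3$'' is exactly the condition $(k-1)(n-1)\ge 2$ needed for $\sum_{\f p}\N(\f p)^{-(k-1)(n-1)}$ to converge. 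This is a tidy unification and buys you a single argument in place of two; the paper's split has the mild advantage that the $k\ge 3$ case becomes a one-line citation. Your handling of the $\PGL_n$ correction terms and of the formulas (a)--(c) matches the paper's computations (for (a) the paper uses the same bound $\g_2(\f p,\M_2(R)^m)\le \g_{2,2}(\N(\f p))^m$ that underlies your comparison to the $m=1$ case). The reference to Example~\ref{matrixproduct} is superfluous here, since all matrix sizes are equal and Theorem~\ref{formula} already gives the local count directly.
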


\begin{proof}
The implications $\text{(i)}\Rightarrow \text{(ii)}$ and $\text{(iii)}\Rightarrow \text{(i)}$
are clear. When $k\geq 3$, the implication $\text{(ii)}\Rightarrow \text{(iii)}$
is an immediate consequence of Theorem~\ref{denslenstra} and the fact that
the $K$-algebra $\M_{n}(K)^{m}$ is 2-generated (\cite{mmbp}).
Suppose now that $k=2$, $n\geq 3$ and (ii) holds.
Consider a maximal ideal $\f p$ of $R$ and let $q=\N(\f p)$. By Theorem \ref{formula}, the number
$\g_2(\f p, A)$ of pairs of elements which generate $\M_{n}(R/\f p)^{m}$ is given by
\[ \g_2(\f p, A)=\prod_{i=0}^{m-1}(\g_{2,n}(q)-i\cdot |\text{\rm PGL}_n(\mathbb F_{q})|).\]
By (ii), we have $g_2(\f p, A)>0$.
Note that $|\text{\rm PGL}_n(\mathbb F_{q})|\leq q^{n^2-1}\leq q^{2n^2-n+1}$. Furthermore,
we have $\g_{2,n}(q)\geq q^{2n^2}-2^{n}q^{2n^2-n+1}$ by Proposition \ref{lowerbound}.
It follows that
\[\g_2(\f p, A)\geq (\N(\f p)^{2n^2}-(2^n+m)\N(\f p)^{2n^2-n+1})^m \]
provided $\N(\f p)>2^n+m$.
By Theorem~\ref{gendensity}, we have
\[ \den_2(A)=\prod_{\f p\in \mspec R}\frac{\g_2(\f p, A)}{\N(\f p)^{2mn^2}}.\]
Since all the factors in the product on the right are positive and all but a finite number of them
satisfy the inequality
\[\frac{\g_2(\f p, A)}{\N(\f p)^{2mn^2}}\geq \left(1-\frac{m+2^n}{\N(\f p)^{n-1}}  \right)^m ,\]
the product converges to a positive number. In other words, $\den_2(A)>0$. This completes
the proof of the implication $\text{(ii)}\Rightarrow \text{(iii)}$.

In order to establish the formulas (b) and (c) note that
\[ \den_k(\M_n(R))=\prod_{\f p\in \mspec R}\frac{\g_{k,n}(\N(\f p))}{\N(\f p)^{kn^2}}\]
by Theorem~\ref{gendensity}. The formulas (b) and (c) follow now from (\ref{n=2}) and (\ref{n=3})
respectively. To justify (a) note that $\g_2(\f p, \M_2(R)^m)\leq \g_{2,2}(\N(\f p))^m$ for every maximal
ideal $\f p$. It follows that $\den_2(\M_2(R)^m)\leq \den_2(\M_2(R))^m$. Since by (b) with $k=2$
we have $\den_2(\M_2(R))=0$, the equality in (a) follows. \end{proof}

Recall now that by Theorem \ref{formula}, the $R/\f p$-algebra $\M_n(R/\f p)^m$ is $k$-generated
iff $m\leq \gen_{k,n}(\N(\f q))$, where $\displaystyle \gen_{k,n}(q)=\frac{\g_{k,n}(q)}{|\text{\rm PGL}_n(\mathbb F_{q})| }$.
Using the formula (\ref{n=3}) we get the following theorem.

\begin{theorem}\label{3by3} Let $R$ be the ring of integers in a number field and let $\f p$ be a maximal
ideal of $R$ with smallest norm. Define polynomials $f_k(x)$ by $f_1(x)=0$ and
\begin{eqnarray}
f_k(x)= \frac{x^{3k+1}(x^{k-1}-1)(x^{k-1}+1)(x^k-1)}
{(x^2+x+1)(x-1)^2(x+1)}\times\\ \nonumber (x^{3k-2}+x^{2k-2}-x^{k}-2x^{k-1}-x^{k-2}+x+1)
\end{eqnarray}
for any $k\geq 2$.
Let $k\geq 2$ and $m$ be positive integers. Then the following conditions are equivalent.
\begin{enumerate}[\rm (i)]
\item $r(\M_3(R)^m, R)=k$.

\item $f_{k-1}(\N(\f p)) < m \leq f_k(\N(\f p))$.
\end{enumerate}
In particular, the $\mathbb Z$-algebra $\M_3(\mathbb Z)^m$ is $2$-generated if and only if  $m\leq 768$.
\end{theorem}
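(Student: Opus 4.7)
The plan is to identify $f_k(q)$ with the quantity $\gen_{k,3}(q)$ from Section~\ref{s6}, apply Theorem~\ref{minnumbergen} to reduce the global generation question to local conditions at the maximal ideals of $R$, and then use monotonicity of $f_k$ in $q$ to reduce those local conditions to a single maximal ideal of smallest norm.

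First I would compute $|\PGL_3(\mathbb{F}_q)|=q^3(q-1)^2(q+1)(q^2+q+1)$ and combine this with formula~(\ref{n=3}) to verify the identity
\[ f_k(q)=\frac{\g_{k,3}(q)}{|\PGL_3(\mathbb{F}_q)|}=\gen_{k,3}(q) \]
for every prime power $q$ and every $k\geq 2$. By Theorem~\ref{formula} (applied with $s=1$), $\gen_{k,3}(q)$ is the largest $m$ for which $\M_3(\mathbb{F}_q)^m$ is $k$-generated over $\mathbb{F}_q$. Since $n=3\geq 3$, the equivalence (i)$\Leftrightarrow$(ii) of Theorem~\ref{minnumbergen} applies and yields: $\M_3(R)^m$ admits $k$ generators over $R$ if and only if $m\leq f_k(\N(\f p))$ for every maximal ideal $\f p$ of $R$.

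The main obstacle is showing that for each fixed $k\geq 2$ the function $q\mapsto f_k(q)$ is strictly increasing on prime powers $q\geq 2$. Granting this, the constraint ``$m\leq f_k(\N(\f p))$ for every maximal ideal $\f p$'' collapses to ``$m\leq f_k(\N(\f p_0))$'' for any maximal ideal $\f p_0$ of smallest norm. Monotonicity follows from elementary estimates: each of the factors $x^{3k+1}$, $x^{k-1}-1$, $x^{k-1}+1$, $x^k-1$ is positive and strictly increasing on $x\geq 2$, while dominance of the leading term $x^{3k-2}$ makes the remaining rational factor positive and increasing as well. In the key low case, direct cancellation in the definition of $f_2$ produces $f_2(x)=x^8(x^2-1)$, whose monotonicity on $x\geq 2$ is transparent.

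Combining these steps, $r(\M_3(R)^m,R)\leq k$ iff $m\leq f_k(\N(\f p_0))$, and hence $r(\M_3(R)^m,R)=k$ iff $f_{k-1}(\N(\f p_0))<m\leq f_k(\N(\f p_0))$. The convention $f_1=0$ correctly captures the base case $k=2$, since the noncommutative algebra $\M_3(R)^m$ can never be $1$-generated. For $R=\mathbb{Z}$ we have $\N(\f p_0)=2$, and evaluating the closed form $f_2(x)=x^{10}-x^8$ gives $f_2(2)=1024-256=768$, proving the final assertion.
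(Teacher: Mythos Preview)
Your proposal is correct and follows essentially the same route as the paper: identify $f_k(q)$ with $\gen_{k,3}(q)$ via formula~(\ref{n=3}), invoke Theorem~\ref{minnumbergen} to reduce to local conditions, then use monotonicity of $f_k$ on $[2,\infty)$ to collapse the local conditions to the ideal of smallest norm, and finally evaluate $f_2(2)=768$. Your additional remarks (the explicit simplification $f_2(x)=x^{10}-x^8$ and the justification that $f_1=0$ is the right convention because $\M_3(R)^m$ is never $1$-generated) are details the paper leaves implicit.
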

\begin{proof}
By (\ref{n=3}), we have $\gen_{k,3}(q)=f_k(q)$ for any $k\geq 2$.
By Theorem \ref{minnumbergen}, the $R$-algebra
$\M_3(R)^m$ is $k$-generated iff $m\leq f_k(\N(\f q))$ for every maximal ideal $\f q$ of $R$.
It is easy
to see that $f_k(x)$ is increasing on $[2,\infty)$. It follows that
$\M_3(R)^m$ is $k$-generated iff $m\leq f_k(\N(\f p))$. This establishes the
equivalence of (i) and (ii). The last claim follows now from the fact that $f_2(2)=768$.
\end{proof}

Even though in  (\ref{n=2}) we established a formula for $\gen_{k,2}(q)$, getting
an analog of Theorem~\ref{3by3} for products of copies of $\M_2(R)$ is more complicated.
The difficulty is that the density $\den_2(\M_2(R)^m)$ is 0 and we have to find
a way to deal with the ambiguity in Theorem \ref{lenstra} when $k=2$. So far we can overcome this
difficulty only when $R$ has a maximal ideal of norm 2. We have the following theorem.

\begin{theorem}\label{23}
Let $R$ be the ring of integers in a number field and let $\f p$ be a maximal
ideal of $R$ with smallest norm. Define polynomials $h_k(x)$ by $h_1(x)=0$ and
\begin{eqnarray}
h_k(x)= \frac{x^{2k}(x^{k-1}-1)(x^k-1)}
{(x-1)(x+1)}
\end{eqnarray}
for any $k\geq 2$. Let $k>3$ and $m$ be positive integers. Then the following
conditions are equivalent.
\begin{enumerate}[\rm (i)]
\item $r(\M_2(R)^m, R)=k$.

\item $h_{k-1}(\N(\f p)) < m \leq h_k(\N(\f p))$.
\end{enumerate}
Furthermore, there is an integer $t$ such that $16 \leq t \leq h_2(\N(\f p))$, $\M_2(R)^m$ is $2$-generated
if and only if  $m\leq t$, and $r(\M_2(R)^m, R)=3$ if and only if $t<m\leq h_3(\N(\f p))$. In particular,
if $\N(\f p)=2$, then $t=16$ so in this case {\rm (i)} and {\rm (ii)} are equivalent for all
$k\geq 2$.
\end{theorem}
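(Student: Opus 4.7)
The plan is to reduce almost everything to the local computation encoded by the polynomial identity $\gen_{k,2}(q)=h_k(q)$, and then to address a single boundary case separately. First I would verify this identity. From formula~(\ref{n=2}) we have $\g_{k,2}(q)=q^{2k+1}(q^{k-1}-1)(q^k-1)$, and since $|\PGL_2(\Fg_q)|=q(q-1)(q+1)$, the definition of $\gen_{k,2}(q)$ immediately yields $\gen_{k,2}(q)=q^{2k}(q^{k-1}-1)(q^k-1)/((q-1)(q+1))=h_k(q)$. Strict monotonicity of $h_k(q)$ in $q$ (for $q\geq 2$) is immediate from the explicit form, and strict monotonicity in $k$ is either a direct computation or follows from Corollary~\ref{jump} applied to $A=\M_2(\Fg_q)$.

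For $k\geq 3$ I would apply Theorem~\ref{minnumbergen}: with $n=2$ and $k\geq 3$ it states that $\M_2(R)^m$ admits $k$ generators if and only if it does so locally at every maximal ideal. By Theorem~\ref{formula}, local $k$-generation at a maximal ideal $\f q$ is equivalent to $m\leq\gen_{k,2}(\N(\f q))=h_k(\N(\f q))$. Monotonicity of $h_k$ in its argument reduces this to the single inequality $m\leq h_k(\N(\f p))$, where $\f p$ has smallest norm. Combined with strict monotonicity of $h_k(\N(\f p))$ in $k$, this immediately yields $r(\M_2(R)^m,R)=k\iff h_{k-1}(\N(\f p))<m\leq h_k(\N(\f p))$ for every $k\geq 3$, which covers the case $k>3$ of the theorem and also supplies the half of the dichotomy at $k=3$ reading ``$r=3$ iff $t<m\leq h_3(\N(\f p))$'' once $t$ is properly defined.

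Set $t=\gen_2(\M_2(R),R)$, the largest $m$ for which $\M_2(R)^m$ is two-generated over $R$. Local necessity via Lemma~\ref{localgen} and the formula for $\gen_{2,2}$ gives $t\leq h_2(\N(\f p))$. The real content is the lower bound $t\geq 16$. Here I would first note that it suffices to prove that $\M_2(\mathbb Z)^{16}$ admits two $\mathbb Z$-algebra generators: indeed, $\M_2(R)^{16}\cong R\otimes_{\mathbb Z}\M_2(\mathbb Z)^{16}$ as $R$-algebras, and if $a_1,a_2\in\M_2(\mathbb Z)^{16}$ generate it as a $\mathbb Z$-algebra, then $1\otimes a_1,1\otimes a_2$ generate $\M_2(R)^{16}$ as an $R$-algebra, because $R$-linear combinations of monomials in the latter pair already contain all of $R\otimes_{\mathbb Z}\M_2(\mathbb Z)^{16}$. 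Once $t\geq 16$ is established, one combines it with the upper bound above; when $\N(\f p)=2$ one has $h_2(\N(\f p))=16$, so $t=16$ is forced and (i)$\Leftrightarrow$(ii) extends to $k=2,3$ as asserted.

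The main obstacle is exactly the assertion that $\M_2(\mathbb Z)^{16}$ is two-generated. The density methods of Section~\ref{s3} are genuinely unavailable, since Theorem~\ref{minnumbergen}~(a) gives $\den_2(\M_2(\mathbb Z)^{16})=0$, and Theorem~\ref{denslenstra} does not apply because its hypothesis $k\geq r_0+1=3$ fails. I expect to attack this by an explicit lifting construction: choose two generators $\bar a_1,\bar a_2$ of $\M_2(\Fg_2)^{16}$ (which exist because $\gen_{2,2}(2)=h_2(2)=16$), lift them arbitrarily to $\tilde a_1,\tilde a_2\in\M_2(\mathbb Z)^{16}$, and adjust the lifts by suitable elements of $2\M_2(\mathbb Z)^{16}$ so that the result also generates $\M_2(\Fg_p)^{16}$ for every odd prime $p$. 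The adjustment is possible because for $p>2$ the inequality $h_2(p)\geq h_2(3)=162>16$ holds, so the condition of two-generation mod~$p$ defines a nonempty Zariski-open subset of pairs; combining a Chinese Remainder argument at a finite set of ``bad'' primes with the fact (Proposition~\ref{lowerbound}) that generic pairs generate at all sufficiently large primes should produce a simultaneously valid pair, after which Lemma~\ref{localgen} concludes global two-generation.
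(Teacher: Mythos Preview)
Your reduction matches the paper up to and including the point where everything is pinned on the claim that $\M_2(\mathbb Z)^{16}$ is two-generated. You also correctly identify that density methods are unavailable here. The difficulty is that your proposed ``lift and adjust'' argument does not close.

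The specific gap is in the termination of the adjustment. Fixing a lift that generates over $\mathbb Q$ does give only finitely many bad odd primes, and CRT lets you correct at those primes while keeping the residue modulo $2$. But each correction is a global integer change and can create new bad primes, and nothing in your sketch controls this; Proposition~\ref{lowerbound} bounds the proportion of non-generating pairs at a single prime, not the set of primes at which a fixed pair fails. If one tries to salvage this by a density argument on the coset $(\tilde a_1,\tilde a_2)+(2\M_2(\mathbb Z)^{16})^2$, it still fails: the local factor at an odd prime $p$ is $\prod_{i=0}^{15}\bigl(\g_{2,2}(p)-i|\PGL_2(\Fg_p)|\bigr)/p^{128}$, which for large $p$ behaves like $(1-1/p)^{16}$, so the product over odd primes is again $0$. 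In short, the ambiguity $r_0=r_{\f p}=2$ in Theorem~\ref{lenstra} is genuine here, and neither Theorem~\ref{denslenstra} nor the Swan-type argument of Theorem~\ref{swan} (which needs $k\geq h+1=3$) applies.

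The paper's route is entirely different and exploits arithmetic accidents specific to $2\times2$ matrices with $\{0,1\}$ entries. Proposition~\ref{det} gives the commutator-determinant criterion: $A,B$ generate $\M_2(S)$ iff $\det(AB-BA)\in S^\times$. Lemma~\ref{lift} then shows, by a direct case check on small integers, that for $\{0,1\}$-matrices $\det(AB-BA)$ is odd iff it equals $\pm1$; hence generating modulo $2$ already forces generation over $\mathbb Z$ component-wise. Lemma~\ref{modp} uses the five trace/determinant invariants $\conj(X,Y)$ from \cite{mmbp} to show that two $\{0,1\}$ generating pairs conjugate modulo any odd prime are already conjugate over $\mathbb Z$. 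Combining these with Theorem~\ref{simple}, any pair generating $\M_2(\Fg_2)^{16}$, read literally as a pair of $\{0,1\}$-matrices, generates $\M_2(\mathbb Z)^{16}$ (Proposition~\ref{2by2}). This is why the paper explicitly notes that the ambiguity at $k=2$ is resolved \emph{only} when $R$ has a prime of norm $2$; your generic adjustment strategy, if it worked, would resolve it for every $R$, which is not known.
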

\begin{proof}
By (\ref{n=2}), we have $\gen_{k,2}(q)=h_k(q)$ for any $k\geq 2$. Suppose that $k\geq 3$.
By Theorem \ref{minnumbergen}, the $R$-algebra
$\M_2(R)^m$ is $k$-generated if and only if $m\leq h_k(\N(\f q))$ for every maximal ideal $\f q$ of $R$.
It is easy
to see that $h_k(x)$ is increasing on $[2,\infty)$. It follows that when $k\geq 3$ then
$\M_2(R)^m$ is $k$-generated if and only if $m\leq h_k(\N(\f p))$. This, in particular, justifies
the equivalence of (i) and (ii) when $k>3$. It also implies the existence of $t$
having all the required properties except possibly the estimate $t\geq 16$.
In order to show that $t\geq 16$, we need to establish that $\M_2(R)^{16}$ is $2$-generated
as an $R$-algebra. It suffices to prove that $\M_2(\mathbb Z)^{16}$ admits
two generators as an $\mathbb Z$-algebra. This will be done in Proposition \ref{2by2}.
Finally, the equality $t=16$ when $\N(\f p)=2$ follows from the fact that $h_2(2)=16$.
\end{proof}

In order to improve on Theorem~\ref{23} and extend it to matrix algebras of size $n\geq 3$ the
following two questions need to be answered.

\begin{question}{\rm
Is it true that $t=h_2(\N(\f p))$?}
\end{question}

\begin{question}
{\rm Given positive integers $k$ and $n$, is $\gen_{k,n}(q)$ an increasing function of $q$?}
\end{question}

It order to complete our proof of Theorem~\ref{23} we have to
show that $\M_2(\mathbb Z)^{16}$ admits two generators.
For that we need several
observations, which seem of independent interest.

\begin{proposition}\label{det}
Let $S$ be a commutative ring. Two matrices $A,B\in \M_2(S)$ generate $\M_2(S)$ as an $S$-algebra
if and only if $\det(AB-BA)$ is invertible in $S$.
\end{proposition}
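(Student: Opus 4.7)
The plan is to reduce to the case when $S$ is a field by invoking Lemma~\ref{localgen}. Observe that $\det(AB-BA)$ is a unit in $S$ if and only if its image is nonzero in $S/\f m$ for every maximal ideal $\f m$ of $S$, and that by Lemma~\ref{localgen} the pair $(A,B)$ generates $\M_2(S)$ as an $S$-algebra if and only if the reductions $(\bar A,\bar B)$ generate $\M_2(S/\f m)$ as an $S/\f m$-algebra for every $\f m \in \mspec S$. Both conditions are therefore local on $\spec S$, so I may and will assume that $S=F$ is a field.

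The key algebraic input is that $C := AB - BA$ has trace zero, so Cayley--Hamilton yields $C^2 = -\det(C)\cdot I$; in particular $C$ is invertible over $F$ if and only if $\det(C) \neq 0$. For the ``only if'' direction I would argue contrapositively using the classification of maximal proper subalgebras of $\M_2(F)$ implicit in Proposition~\ref{clasmax}: if $A,B$ fail to generate $\M_2(F)$, they are contained in some maximal proper subalgebra, which for $n=2$ is either the stabilizer $\Ag_U$ of a line $U \subset F^2$ or the image of a quadratic field extension of $F$. In the former case, in coordinates adapted to $U$ both $A$ and $B$ are upper triangular, so $[A,B]$ is strictly upper triangular and hence $\det[A,B]=0$; in the latter case $A,B$ commute, so $[A,B]=0$. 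Either way $\det(C) = 0$, contradicting the hypothesis.

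For the ``if'' direction, assume $\det(C) \neq 0$, so $C$ is invertible over $F$. I will show that $I, A, B, AB$ is an $F$-basis of $\M_2(F)$, which immediately implies that $A,B$ generate $\M_2(F)$. Starting from an alleged relation $aI + bA + cB + dAB = 0$, I would multiply on the left by $B$, multiply on the right by $B$, subtract the two results, and simplify to obtain $C(bI + dB) = 0$. Invertibility of $C$ then forces $bI + dB = 0$; this gives $d = 0$ (otherwise $B$ would be a scalar matrix, making $C = 0$), and subsequently $b = 0$. The residual relation $aI + cB = 0$ similarly yields $c = a = 0$, finishing the linear-independence argument and hence the proof.

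The main obstacle is handling the field case cleanly; the ``only if'' direction depends on the classification of maximal subalgebras, and the ``if'' direction relies on the slightly delicate identity $C(bI + dB) = 0$, which emerges from a symmetric left/right $B$-multiplication trick rather than from any direct dimension count.
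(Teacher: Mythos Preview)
Your argument is correct, and both directions in the field case take a genuinely different route from the paper.

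For the reduction to fields via Lemma~\ref{localgen} you agree with the paper. In the field case, however, the paper argues as follows. For the direction ``do not generate $\Rightarrow \det C=0$'' it observes that any proper subalgebra $T\subseteq\M_2(F)$ has dimension at most $3$, so $T/J(T)$ is a semisimple algebra of dimension $\leq 3$, hence commutative; thus $C=[A,B]\in J(T)$ is nilpotent and $\det C=0$. For the direction ``$\det C=0\Rightarrow$ do not generate'' the paper notes that $C$ has trace $0$, so $C^2=0$; if $C\neq 0$ the one-dimensional null-space of $C$ is shown (via the identity $AC+CA=(\tr A)\,C$) to be invariant under both $A$ and $B$, giving a proper invariant line.

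Your approach instead invokes the classification of maximal subalgebras for the first direction and proves directly that $I,A,B,AB$ are linearly independent for the second. The linear-independence computation is clean and perhaps more explicit than the paper's invariant-line argument, since it actually exhibits a basis of $\M_2(F)$ inside the subalgebra. One small caveat: Proposition~\ref{clasmax} as stated in the paper is only for finite fields $\mathbb F_q$, while you need it over an arbitrary field $F$; the extension is routine (the same proof shows a maximal subalgebra is either some $\Ag_U$ or a quadratic field extension of $F$ sitting inside $\M_2(F)$), but you should say a word about it. The paper's Jacobson-radical argument sidesteps this by avoiding any classification.
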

\begin{proof}
First we prove the result under the additional assumption that $S$ is a field.
Let $N=AB-BA$ and let $T$ be the subalgebra generated by $A$ and $B$.
If $T$ is a proper subalgebra then its dimension is at most 3. It follows that $T/J(T)$ is a semi-simple
algebra of dimension $\leq 3$ (recall that $J(T)$ denotes the Jacobson radical of $T$).
Thus $T/J(T)$ is abelian and therefore $N\in J(T)$. Since the Jacobson radical
is nilpotent, $N$ is nilpotent, hence $\det N=0$.

Conversely, suppose that $\det N=0$. Recall that any $2$ by $2$ matrix $X$ satisfies the identity
$X^2=t_XX-d_XI$, where $t_X$ is the trace and $d_X$ is the determinant of $X$. Since the trace of $N$ is $0$,
we have $N^2=0$. If $N=0$ then $T$ is commutative, hence a proper subalgebra of
$\M_2(S)$. If $N\neq 0$, , the null-space of $N$ is one dimensional.
Using the identity $A^2=t_AA-d_AI$ we easily see that $AN+NA=t_AN$. It follows that
the null-space of $N$ is $A$-invariant. Similarly, the null-space of $N$ is $B$-invariant.
It follows that the null-space of $N$ is $T$-invariant, hence $T$
is a proper subalgebra of $\M_2(S)$. This completes our proof in the case when $S$ is a field.

If $S$ is any commutative ring then, by Lemma \ref{localgen}, the matrices $A, B$ generate $\M_2(S)$
if and only if for any maximal ideal $M$ of $S$ the $S/M$-algebra $\M_2(S/M)$ is generated by the
images of $A$ and $B$. By the just established field case of the result, this is equivalent to
the condition that $\det(AB-BA)\not\in M$ for all maximal ideals $M$, which in turn is
equivalent to claiming that $\det(AB-BA)$ is invertible in $S$.
\end{proof}

The following observation is due to H.W. Lenstra.

\begin{lemma}\label{lift}
Let $A,B\in \M_2(\mathbb Z)$ be two matrices with all entries in $\{0,1\}$. Then $A,B$ generate
$\M_2(\mathbb Z)$ if and only if their reductions modulo $2$ generate $\M_2(\mathbb F_2)$.
\end{lemma}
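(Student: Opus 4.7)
The plan is to combine Proposition~\ref{det}, applied at both $\mathbb Z$ and $\mathbb F_2$, with a determinant bound specific to $\{0,1\}$-matrices. By Proposition~\ref{det}, the matrices $A,B$ generate $\M_2(\mathbb Z)$ iff $\det(AB-BA)=\pm 1$, while their mod-$2$ reductions generate $\M_2(\mathbb F_2)$ iff $\det(AB-BA)$ is odd. Since $\pm 1$ is odd, the direction ``$A,B$ generate $\M_2(\mathbb Z)$ $\Rightarrow$ reductions generate $\M_2(\mathbb F_2)$'' is immediate, and the content of the lemma is the converse: for $A,B$ with entries in $\{0,1\}$, an odd value of $\det(AB-BA)$ must equal $\pm 1$.

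Writing $A=\left(\begin{smallmatrix}a&b\\c&d\end{smallmatrix}\right)$, $B=\left(\begin{smallmatrix}e&f\\g&h\end{smallmatrix}\right)$, and $N:=AB-BA=\left(\begin{smallmatrix}\alpha&\beta\\\gamma&-\alpha\end{smallmatrix}\right)$, a short calculation gives $\alpha=bg-cf$, $\beta=bu+fv$, $\gamma=-(cu+gv)$ with $u:=h-e$ and $v:=a-d$, and $\det N=-\alpha^2-\beta\gamma$. Since all entries of $A,B$ lie in $\{0,1\}$, we have $\alpha\in\{-1,0,1\}$ and $\beta,\gamma\in\{-2,-1,0,1,2\}$; consequently $\det N\in\{-5,\dots,4\}$, and its odd values lie in $\{-5,-3,-1,1,3\}$. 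I will rule out $\det N\in\{-5,-3,3\}$.

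If $\alpha=0$, then $\det N=-\beta\gamma$, and oddness forces $\beta,\gamma\in\{\pm 1\}$, so $\det N\in\{\pm 1\}$, as required. When $\alpha=\pm 1$, I must exclude $\beta\gamma\in\{-4,2,4\}$. Inspecting $\beta=bu+fv$ one sees that $|\beta|=2$ requires $b=f=1$ and $u=v\in\{\pm 1\}$; symmetrically $|\gamma|=2$ requires $c=g=1$ with the same constraint on $u,v$. Hence $\beta\gamma=\pm 4$ would force $b=c=f=g=1$, giving $\alpha=bg-cf=0$ and contradicting $\alpha=\pm 1$. The remaining case $\beta\gamma=2$ requires exactly one of $|\beta|,|\gamma|$ to equal $2$; a short enumeration over the four possibilities (two sign choices for $\alpha$ times two choices of which of $|\beta|,|\gamma|$ equals $2$) pins down all four off-diagonal entries of $A$ and $B$, and direct substitution yields $\beta\gamma=-2u^2=-2$ in every subcase, never $+2$.

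There is no deep obstacle here: the whole argument rests on the determinant criterion of Proposition~\ref{det}, and the only calculation beyond writing down $N$ is the four-subcase enumeration above, which is short because the condition $|\beta|=2$ or $|\gamma|=2$ is very restrictive on $\{0,1\}$-matrix entries.
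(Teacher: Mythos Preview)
Your proof is correct and follows essentially the same approach as the paper: both reduce via Proposition~\ref{det} to showing that an odd value of $\det(AB-BA)$ for $\{0,1\}$-matrices must be $\pm 1$, then compute the commutator explicitly and do a short case analysis on which of the entries of $N$ can have absolute value $2$. Your organization (splitting on $\alpha=0$ versus $\alpha=\pm1$, then excluding $\beta\gamma\in\{-4,2,4\}$) is slightly more systematic than the paper's, but the underlying computations are identical; in particular, your four-subcase check that $\beta\gamma=-2u^2=-2$ rather than $+2$ is exactly the calculation the paper summarizes as ``$\det(AB-BA)=-1-(\mp 1)(\pm 2)=1$''.
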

\begin{proof}
By Proposition \ref{det}, we need to prove that $\det(AB-BA)$ is odd if and only iff
it is $\pm 1$. The ``if'' part is clear.
Suppose then that $A=\displaystyle \left(\begin{smallmatrix}
a_1& a_2\\ a_3& a_4
\end{smallmatrix} \right) $ and $B=\displaystyle \left(\begin{smallmatrix}
b_1& b_2\\ b_3& b_4
\end{smallmatrix} \right) $  are such that $a_i, b_j\in \{0,1\}$ and
$\det(AB-BA)$ is odd. Note that $AB-BA=\displaystyle \left(\begin{smallmatrix}
a_2b_3-a_3b_2& a_2(b_4-b_1)+b_2(a_1-a_4)\\ a_3(b_1-b_4)+b_3(a_4-a_1)& a_3b_2-a_2b_3
\end{smallmatrix} \right) $. The diagonal entries of this matrix are in $\{0,\pm 1\}$ and the
off-diagonal entries are in the set $\{0,\pm 1,\pm 2\}$. If $a_2b_3-a_3b_2=0$ then the off-diagonal
entries must be odd and hence are $\pm 1$. It follows that $\det(AB-BA)=\pm 1$. The same conclusion holds if
one of the off-diagonal entries is 0.
Suppose now that $a_2b_3-a_3b_2=\pm 1$ and the off-diagonal entries are not 0.
Then one of the off-diagonal entries, say $a_3(b_1-b_4)+b_3(a_4-a_1)$,  must be even and non-zero
(the other possibility is handled in the same way).
This can only happen if $a_3=b_3=1$ and $b_1-b_4=a_4-a_1=\pm 1$. It follows that one of $a_2,b_2$
is 0 and the other is $1$. Thus $\det(AB-BA)=-1-(\mp 1)(\pm 2)=1$.
\end{proof}

\begin{lemma}\label{modp}
Let $A, B, A',B'\in \M_2(\mathbb Z)$ be matrices with all entries in $\{0,1\}$ such that each pair $A,B$ and
$A', B'$ generates $\M_2(\mathbb Z)$. If there is an odd prime $p$ such that the reductions modulo $p$ of $(A,B)$
and $(A', B')$ are conjugate in $\M_2(\mathbb F_p)$ then the pairs $(A,B)$ and $(A',B')$ are conjugate
in $\M_2(\mathbb Z)$.
\end{lemma}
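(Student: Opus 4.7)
The plan is to promote the mod-$p$ conjugator to an integer one in three stages: first I would show that $(A,B)$ and $(A',B')$ are simultaneously conjugate over $\mathbb Q$; next, clear denominators to produce a primitive matrix $V\in \M_2(\mathbb Z)$ with $VA=A'V$ and $VB=B'V$; finally, invoke the generating hypothesis to force $\det V=\pm 1$. Denote by $U\in \GL_2(\mathbb F_p)$ the given mod-$p$ conjugator.

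For the $\mathbb Q$-conjugacy stage, I would invoke the classical fact from invariant theory that the simultaneous $\GL_2(\mathbb Q)$-conjugacy class of a pair of matrices generating $\M_2(\mathbb Q)$ is determined by the invariants $\tr(A),\det(A),\tr(B),\det(B),\tr(AB)$. The first four lie in $\{0,1,2\}$ or $\{-1,0,1\}$ and so are determined by their residues modulo any odd prime, giving equality over $\mathbb Z$. Similarly $\det([A,B])=\pm 1$ by Proposition~\ref{det} and the generating hypothesis, and its value is also pinned down mod $p$ since $\pm 1$ are distinct modulo any odd prime. Using $\tr([A,B])=0$, the identity $\det([A,B])=-\tr([A,B]^2)/2$, and Cayley--Hamilton applied to $A$, $B$, and $AB$, a direct computation shows that $\tr(AB)$ satisfies the monic integer quadratic
\[t^2-\tr(A)\tr(B)\,t+\bigl(\tr(A)^2\det(B)+\det(A)\tr(B)^2-4\det(A)\det(B)+\det[A,B]\bigr)=0,\]
whose coefficients are known to agree for the two pairs. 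Hence $\tr(AB)$ and $\tr(A'B')$ are both integer roots of the same quadratic and are congruent modulo $p$. A brief arithmetic analysis shows that two distinct roots of this quadratic both lying in $\{0,1,2,3,4\}$ (the range forced by the $\{0,1\}$-entries) can differ only by $1$, $2$, or $4$, since the constraint $\tr(A)\tr(B)\in\{0,1,2,3,4\}$ excludes the root-pairs with difference $3$. None of these differences is divisible by an odd prime, so the mod-$p$ congruence upgrades to equality over $\mathbb Z$, and by invariant theory the pairs are $\GL_2(\mathbb Q)$-conjugate.

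For the last two stages, let $V_0\in \GL_2(\mathbb Q)$ satisfy $V_0A=A'V_0$ and $V_0B=B'V_0$; scale $V_0$ by a rational so that $V:=V_0\in \M_2(\mathbb Z)$ has greatest common divisor of entries equal to $1$, and the equations still hold integrally. I claim $\det V=\pm 1$. Suppose not, and let $q$ be a prime dividing $\det V$. Then $\bar V:=V\bmod q\in \M_2(\mathbb F_q)$ is nonzero (by primitivity) and singular; from $VA=A'V$ and $VB=B'V$, its kernel is a proper nonzero $\mathbb F_q$-subspace of $\mathbb F_q^2$ invariant under both $A\bmod q$ and $B\bmod q$. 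But by Lemma~\ref{localgen} the hypothesis that $(A,B)$ generates $\M_2(\mathbb Z)$ implies $(A\bmod q,B\bmod q)$ generates $\M_2(\mathbb F_q)$, which acts irreducibly on $\mathbb F_q^2$, contradicting the existence of such a kernel. Therefore $\det V\in\{\pm 1\}$ and $V\in \GL_2(\mathbb Z)$ is the desired conjugator. The main obstacle is the invariant-theoretic analysis in the first stage, specifically the case $p=3$, where one must rule out root-difference $3$ using the $\{0,1\}$-entries to bound $\tr(A)\tr(B)\leq 4$.
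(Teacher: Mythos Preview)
Your overall strategy is sound and close to the paper's, but there is one slip that actually matters. You assert that the constraint $\tr(A)\tr(B)\in\{0,1,2,3,4\}$ rules out root-pairs with difference $3$. It does not: the pair $(0,3)$ has sum $3$, which lies in that set. The fix is immediate once you sharpen the range: since $\tr(A),\tr(B)\in\{0,1,2\}$, the product $\tr(A)\tr(B)$ lies in $\{0,1,2,4\}$ and cannot equal $3$; hence the sum of the two roots cannot be $3$ or $5$, and both candidate pairs $(0,3)$ and $(1,4)$ are excluded. With this correction your quadratic argument goes through, and the remaining differences $1,2,4$ are indeed coprime to every odd prime. (Your derivation of the quadratic itself is correct: for traceless $N=[A,B]$ one has $\det N=-\tfrac12\tr(N^2)$, and expanding $\tr([A,B]^2)$ via Cayley--Hamilton yields exactly your identity.)

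It is worth contrasting the two proofs. The paper invokes \cite[Theorem~2]{mmbp} over the PID $\mathbb Z$ directly, so that equality of the five invariants already gives $\GL_2(\mathbb Z)$-conjugacy; your last two stages (clearing denominators and using irreducibility mod each prime to force $\det V=\pm1$) are then unnecessary. On the other hand, your argument is more self-contained, needing only the classical field case of the invariant-theory statement. For the delicate invariant $\tr(AB)$, the paper proceeds by a bare-hands case analysis: if $\tr(AB)\neq\tr(A'B')$ are congruent modulo an odd prime $p$, then $p=3$ and one of the values is $0$ or $4$; the value $4$ forces $A=B$, and the remaining case $\{\tr(AB),\tr(A'B')\}=\{0,3\}$ is eliminated by tracking $\tr(A),\tr(B)$ entrywise. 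Your route through the quadratic and $\det[A,B]=\pm1$ is cleaner and explains \emph{why} the case analysis collapses: the two candidate values must sum to $\tr(A)\tr(B)$, and $3$ is simply not a product of two integers from $\{0,1,2\}$.
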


\begin{proof}
For a pair of $2\times 2$ matrices $X,Y$  define
\[\conj(X,Y) = \left(\tr(X),\, \det(X), \,\tr(Y), \, \det(Y), \, \tr(XY) \right).\]
It follows from \cite[Theorem 2]{mmbp} that for any principal ideal domain $R$ and any two pairs
$(X,Y)$, $(X', Y')$ of elements in $\M_2(R)$ which generate $\M_2(R)$ as an $R$-algebra
we have $\conj(X,Y)=\conj(X', Y')$ if and only if $X'=CXC^{-1}$ and $Y'=CYC^{-1}$
for some invertible matrix $C\in \M_2(R)$ (in \cite{mmbp} the fifth component of $\conj$
is $\det(X+Y)$ but it is equivalent to the version above by the following identity for
$2\times 2$ matrices
\[ \tr(X) \tr(Y)-\tr(XY) +\det(X)+\det(Y)-\det(X+Y) = 0 .)
\]
Under the assumptions of the lemma, the traces of $A,B,A',B'$ are in $\{0,1,2\}$ and
the determinants of these matrices are in $\{-1,0,1\}$. Our assumption that $\conj(A,B)\equiv\conj(A',B')\mo p$
implies then that $\tr A=\tr A'$, $\det A=\det A'$, $\tr B=\tr B'$ and $\det B=\det B'$. It remains to prove that
$\tr(AB)=\tr(A'B')$. Let $A=\displaystyle \left(\begin{smallmatrix}
a_1& a_2\\ a_3& a_4
\end{smallmatrix} \right) $, $B=\displaystyle \left(\begin{smallmatrix}
b_1& b_2\\ b_3& b_4
\end{smallmatrix} \right) $,$A'=\displaystyle \left(\begin{smallmatrix}
a_1^{'}& a_2^{'}\\ a_3^{'}& a_4^{'}
\end{smallmatrix} \right) $, and $B'=\displaystyle \left(\begin{smallmatrix}
b_1^{'}& b_2^{'}\\ b_3^{'}& b_4^{'}
\end{smallmatrix} \right) $. Then $\tr(AB)=a_1b_1+a_2b_3+a_3b_2+a_4b_4$ and
$\tr(A'B')=a_1^{'}b_1^{'}+a_2^{'}b_3^{'}+a_3^{'}b_2^{'}+a_4^{'}b_4^{'}$. Both these
numbers belong to $\{0,1,2,3,4\}$. Suppose that these numbers are different. Since they are congruent
modulo $p$, we see that
$p=3$ and one of these numbers is in $\{0,4\}$. If $\tr(AB)=4$ then all the entries $a_i$ and $b_j$
must be $1$ so $A=B$, which is not possible. Thus we may assume that $\tr(AB)=0$ and then $\tr(A'B')=3$.
If $\tr(A)=0$ then $\tr(A')=0$, so $a_1^{'}=a_4^{'}=0$ and therefore $\tr(A'B')\leq 2$, a contradiction.
Thus $\tr(A)\neq 0$ and in the same way we show that $\tr(B)\neq 0$. If $\tr(A)=2$ then $a_1=a_4=1$ so $b_1=b_4=0$ and
$\tr(B)=0$, which we have just proved impossible. This shows that $\tr(A)=1$ and similar argument
yields $\tr(B)=1$. Thus
$\tr(A')=1=\tr(B')$. It follows that one of $a_1^{'}$, $a_4^{'}$ is 0. We may assume that $a_1^{'}=0$
(same argument works when $a_4^{'}=0$).
Then $a_2^{'}=a_3^{'}=a_4^{'}=b_2^{'}=b_3^{'}=b_4^{'}=1$ and consequently $b_1^{'}=0$ and $A'=B'$,
 a contradiction.
\end{proof}

We have now the following curious proposition.

\begin{proposition}\label{2by2}
Let $x,y$ be two elements of $\M_2(\mathbb Z)^{k}$ such that every component of $x$ and $y$ is a matrix
whose all entries are in $\{0,1\}$. Suppose that $x,y$, considered as elements of
$\M_2(\mathbb F_2)^{k}$, generate the algebra $\M_2(\mathbb F_2)^{k}$.
Then $x,y$ generate $\M_2(\mathbb Z)^{k}$ as a ring.
In particular,
the ring $\M_2(\mathbb Z)^{16}$ admits $2$ generators.
\end{proposition}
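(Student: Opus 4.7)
The plan is to combine Lemma~\ref{localgen} (local-global for generation), Lemma~\ref{lift} (lifting pairs of $0/1$-matrices from $\F_2$ to $\Z$), Proposition~\ref{det} (determinantal criterion), and Theorem~\ref{simple} (the orbit criterion for generating $A^k$ when $A$ is simple over a field) together with Lemma~\ref{modp} (conjugacy descent for generating pairs). Writing $x=(A_1,\ldots,A_k)$ and $y=(B_1,\ldots,B_k)$, the assumption that $x,y$ generate $\M_2(\F_2)^k$ implies, by Theorem~\ref{simple}, that (i) each reduction $(A_i\bmod 2,B_i\bmod 2)$ generates $\M_2(\F_2)$, and (ii) no two of these reductions are in the same $\Aut_{\F_2}(\M_2(\F_2))$-orbit.

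First I would dispose of generation over $\Z$ componentwise: by (i) and Lemma~\ref{lift}, each pair $(A_i,B_i)$ generates $\M_2(\Z)$ as a ring, so by Proposition~\ref{det} the integer $\det(A_iB_i-B_iA_i)$ equals $\pm 1$. Next, to show that $(x,y)$ generates $\M_2(\Z)^k$, I would invoke Lemma~\ref{localgen}: it suffices to check that the reductions generate $\M_2(\F_p)^k$ over $\F_p$ for every prime $p$. The case $p=2$ is the hypothesis. For odd $p$, Theorem~\ref{simple} (applied to the simple $\F_p$-algebra $\M_2(\F_p)$) requires two things: that each $(A_i\bmod p,B_i\bmod p)$ generates $\M_2(\F_p)$, and that no two of these reductions are conjugate under $\Aut_{\F_p}(\M_2(\F_p))$. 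The first follows from $\det(A_iB_i-B_iA_i)=\pm 1\not\equiv 0\pmod p$ together with Proposition~\ref{det}.

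The main obstacle is the second orbit condition at odd primes $p$. Here is where Lemma~\ref{modp} does the work: by Skolem--Noether every $\F_p$-algebra automorphism of $\M_2(\F_p)$ is inner, so if $(A_i\bmod p,B_i\bmod p)$ and $(A_j\bmod p,B_j\bmod p)$ lay in the same orbit they would be $\GL_2(\F_p)$-conjugate; applying Lemma~\ref{modp} (which is stated exactly for $0/1$-entry matrices at an odd prime and needs the pairs to generate $\M_2(\Z)$, already established) would give $\M_2(\Z)$-conjugacy of $(A_i,B_i)$ with $(A_j,B_j)$, whence $\M_2(\F_2)$-conjugacy of the reductions mod $2$, contradicting (ii). This completes the verification of the hypotheses of Theorem~\ref{simple} at every odd prime, and finishes the argument that $(x,y)$ generates $\M_2(\Z)^k$.

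For the ``in particular'' clause, I would exhibit such an $x,y$ with $k=16$. The number of generating pairs in $\M_2(\F_2)$ equals $\g_{2,2}(2)=16$ (from formula~(\ref{n=2})), while $|\PGL_2(\F_2)|=6$; Theorem~\ref{formula} then shows $\M_2(\F_2)^m$ is $2$-generated iff $m\leq 96/6=16$. Choosing $16$ pairs of matrices in $\M_2(\F_2)^2$, one from each $\PGL_2(\F_2)$-orbit of generating pairs (such representatives exist because every entry in $\F_2=\{0,1\}$ lifts tautologically to a $0/1$ integer entry), yields $x,y\in\M_2(\Z)^{16}$ with $0/1$ entries whose mod-$2$ reductions generate $\M_2(\F_2)^{16}$. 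The first part of the proposition then gives that these $x,y$ generate $\M_2(\Z)^{16}$.
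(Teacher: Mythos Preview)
Your argument is correct and follows essentially the same route as the paper: Lemma~\ref{lift} lifts componentwise generation to $\M_2(\mathbb Z)$, Lemma~\ref{localgen} together with Theorem~\ref{simple} reduces the problem to ruling out conjugacy modulo each prime, the hypothesis handles $p=2$, and Lemma~\ref{modp} handles all odd primes by the contrapositive you give (conjugacy mod $p$ $\Rightarrow$ conjugacy in $\M_2(\mathbb Z)$ $\Rightarrow$ conjugacy mod $2$, contradiction). The ``in particular'' clause is also the paper's argument, using $\gen_{2,2}(2)=16$ and the tautological $\{0,1\}$-lift from $\mathbb F_2$. One small slip: you wrote $\g_{2,2}(2)=16$, but formula~(\ref{n=2}) gives $\g_{2,2}(2)=2^{5}(2-1)(2^{2}-1)=96$; the $96$ appears correctly in your subsequent computation $96/6=16$, so this is only a typo.
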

\begin{proof}
Let $x=(X_1,\ldots,X_k)$, $y=(Y_1,\ldots,Y_k)$. By Lemma \ref{lift}, each pair $(X_i,Y_i)$ generates
$\M_2(\mathbb Z)$. According to Lemma~\ref{localgen} and Theorem~\ref{simple}, it suffices
to prove that for any prime $p$ and any $1\leq i<j\leq k$, the pairs $(X_i,Y_i)$ and $(X_j, Y_j)$
are not conjugate modulo $p$. For $p=2$ this follows from our assumptions and Theorem~\ref{simple}.
Consequently, the pairs $(X_i,Y_i)$ and $(X_j, Y_j)$
are not conjugate in $\M_2(\mathbb Z)$ whenever $i\neq j$.
By Lemma~\ref{modp}, the pairs $(X_i,Y_i)$ and $(X_j, Y_j)$
are not conjugate modulo $p$ for any odd prime $p$. This proves the first part
of the proposition.

Since $\gen_{2,2}(2)=16$ by (\ref{n=2}), the algebra
$\M_2(\mathbb F_2)^{16}$ is two generated. It follows from the first part of the proposition that
$\M_2(\mathbb Z)^{16}$ admits $2$ generators.
\end{proof}

\noindent
\begin{remark} {\rm We would like to point out that one should not expect any analogs of Proposition~\ref{2by2}
for matrix rings of size larger than $2$. For example, consider the matrices
$ \displaystyle A=\left(\begin{smallmatrix}
0&0& 0\\ 0& 0&0\\0&1&1
\end{smallmatrix} \right) \ \ \text{and}\ \ B=\left(\begin{smallmatrix}
0&0& 1\\ 1& 0&1\\0&0&1
\end{smallmatrix} \right)$. Considered as matrices over the field $\mathbb F_3$ with $3$ elements
they have a common eigenvector $(1,-1,1)^t$. Thus these matrices do not generate $\M_3(\mathbb F_3)$,
hence they do not generate $\M_3(\mathbb Z)$. Consider now these matrices as matrices over $\mathbb F_2$.
If they do not generate $\M_3(\mathbb F_2)$, then they are contained in a maximal subalgebra of
$\M_3(\mathbb F_2)$. By Proposition~\ref{clasmax}, the maximal subalgebra is either a field or
it fixes a non-trivial proper subspace. Since $A^2=A$, the former case is not possible.
In the latter case, $A$ and $B$ have a common eigenvector either in their action on column vectors
or in their action on row vectors. It is however a straightforward verification to see that no such
common eigenvector exist. Thus $A$ and $B$ generate the algebra $\M_3(\mathbb F_2)$. In fact, in the same
way one can see that they generate $\M_3(\mathbb F_p)$ for any prime $p$ different from $3$.
With a bit more work, one can see that the subalgebra of $\M_3(\mathbb Z)$ generated by $A$ and $B$
has index 9. Note that by (\ref{n=3}), there are 129024 ordered pairs of $3\times 3$ matrices with
entries in $\{0,1\}$, which considered as elements of $\M_3(\mathbb F_2)$ generate the algebra
$\M_3(\mathbb F_2)$. Tsvetomira Radeva, at our request,  performed computations using Java and GAP
and found that among them exactly 9132 pairs do not generate $\M_3(\mathbb Z)$. The computations are based
on a result of Paz \cite{paz} and use the LLL algorithm \cite{LLL}, \cite{pohst}.}
\end{remark}

\vspace{4mm}
We end with the following curious observation. In Theorem~\ref{minnumbergen} we defined a family
of polynomials $\phi_k(x)$, $k\geq 2$. The polynomial $x^{3k-2}+\phi_k(x)$ is a factor
of the polynomial $f_k$ defined in Theorem~\ref{3by3}. Define polynomials $\psi_k(x)$
as follows.
\begin{equation}\psi_k(x)=\begin{cases}
\frac{x^{3k-2}+\phi_k(x)}{x-1}, & \text{if $k \equiv 0,\  4\,(\text{mod}\,6)$}\\
\frac{x^{3k-2}+\phi_k(x)}{x^2-1}, & \text{if $k \equiv 1,\ 3\,(\text{mod}\,6)$}\\
\frac{x^{3k-2}+\phi_k(x)}{x^3-1}, & \text{if $k \equiv 2\,(\text{mod}\,6)$}\\
\frac{x^{3k-2}+\phi_k(x)}{(x+1)(x^3-1)}, & \text{if $k \equiv 5\,(\text{mod}\,6)$.}
\end{cases}
\end{equation}
Computer computations with Maxima show that the polynomials $\phi_k$ and $\psi_k$ are irreducible
for $k\le 250$. While the polynomials $\phi_k$ have only six non-zero coefficients,
the polynomials $\psi_k$
have complicated structure. For example,
$\psi_{12}(x)={x}^{33}+{x}^{32}+{x}^{31}+{x}^{30}+{x}^{29}+{x}^{28}+{x}^{27}+{x}^{26}+{x}^{25}+{x}^{24}+{x}^{23}+{x}^{22}+2\,{x}^{21}+2\,{x}^{20}+2\,{x}^{19}+2\,{x}^{18}+2\,{x}^{17}+2\,{x}^{16}+2\,{x}^{15}+2\,{x}^{14}+2\,{x}^{13}+2\,{x}^{12}+{x}^{11}-{x}^{10}-2\,{x}^{9}-2\,{x}^{8}-2\,{x}^{7}-2\,{x}^{6}-2\,{x}^{5}-2\,{x}^{4}-2\,{x}^{3}-2\,{x}^{2}-2\,x-1$.
Nevertheless, it seems that all the coefficients of $\psi_k$ are in the set $\{-2,-1,0,1,2\}$.
Even though we do not have at present any conceptual reason for it, we propose the following
intriguing conjecture.

\begin{conjecture}
The polynomials $\phi_k$ and $\psi_k$ are irreducible.
\end{conjecture}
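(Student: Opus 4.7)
The plan is to attack the conjecture by exploiting the sparse structure of the polynomials. Observe first that $\phi_k$ admits the compact factorization
\[
\phi_k(x)=x^{2k-2}-(x+1)^{2}x^{k-2}+(x+1),
\]
so if we set $y=x^{k-1}$, then $\phi_k(x)=0$ is equivalent to
\[
xy^{2}-(x+1)^{2}y+x(x+1)=0.
\]
This exhibits $\phi_k$ as arising from the intersection of the graph $y=x^{k-1}$ with a fixed conic $C\subset\mathbb{A}^{2}$, independent of $k$. The discriminant of this quadratic in $y$ is $D(x)=(x+1)(x^{3}-x^{2}+3x+1)$, and since $x^{3}-x^{2}+3x+1$ is irreducible over $\mathbb{Q}$ and coprime to $x+1$, the field $L=\mathbb{Q}(x)[\sqrt{D(x)}]$ is a genuine quadratic extension of $\mathbb{Q}(x)$ in which $\phi_k$ splits formally as $(x^{k-1}-\alpha(x))(x^{k-1}-\beta(x))$, where $\alpha,\beta$ are the two roots of the conic and are Galois-conjugate over $\mathbb{Q}(x)$.

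Working in this framework, I would first try to pin down the Galois group $G_k$ of $\phi_k$ over $\mathbb{Q}$ and show it acts transitively on the roots. The key observation is that each root $\alpha_0$ of $\phi_k$ comes with a distinguished companion point $(\alpha_0,\alpha_0^{k-1})\in C(\overline{\mathbb{Q}})$, and the hyperelliptic involution of $L/\mathbb{Q}(x)$ pairs roots into $k-1$ pairs. Controlling how $G_k$ permutes these pairs should, for $k$ large enough, force transitivity via a cycle-type argument combined with reduction modulo a well-chosen prime: good candidates are primes $p$ dividing $k-1$ (where the map $x\mapsto x^{k-1}$ has controlled ramification in $\mathbb{F}_p$) and the primes dividing values such as $D(1)=8$ or $D(-1)=0$, at which the geometry of $C$ degenerates.

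Second, I would invoke the theory of lacunary polynomials. Since $\phi_k$ has only six non-zero terms and the number of terms is bounded independently of $k$, theorems of Schinzel (later strengthened by Zannier) constrain the shape of any non-cyclotomic factor: essentially, a non-cyclotomic factor of a $t$-nomial must itself have at most a bounded number of terms, with bound depending only on $t$. The concrete plan is therefore to enumerate the (finitely many possible) shapes of a hypothetical non-cyclotomic factor of $\phi_k$ and derive a contradiction from comparing coefficients with $\phi_k(x)=x^{2k-2}-(x+1)^{2}x^{k-2}+(x+1)$, probably via Newton polygon restrictions at $2$ (the only prime dividing a coefficient of $\phi_k$) together with the absence of cyclotomic roots, which was verified for $\phi_k(\pm 1)=\mp 1$ and can be extended to all $\zeta_n$ by noting that $|\phi_k(\zeta_n)|$ admits uniform lower bounds from the trinomial identity above. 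For $\psi_k$, once $P_k(x)=x^{3k-2}+\phi_k(x)$ is analysed similarly, the definition of $\psi_k$ strips off exactly the cyclotomic factors $x-1$, $x+1$, $x^{2}+x+1$ (whose appearance depends on $k\bmod 6$, as can be verified by direct evaluation), so irreducibility of $\psi_k$ reduces to showing that $P_k/(\text{cyclotomic part})$ has no further cyclotomic factors and no non-trivial non-cyclotomic factorization; the second half is handled by the same lacunary argument as for $\phi_k$.

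The principal obstacle will be the third step: even with Schinzel-type theorems in hand, ruling out "sporadic" factorizations for small $k$ — say those arising from unexpected identities among coefficients — typically reduces to solving an exponential Diophantine equation in $k$, and these can resist all elementary methods. A secondary obstacle is controlling cyclotomic factors of $\phi_k$: one must show that $\Phi_n(x)\nmid\phi_k(x)$ for every $n>2$ and every $k$, which amounts to showing that the system $\zeta^{2k-2}-(\zeta+1)^{2}\zeta^{k-2}+(\zeta+1)=0$ has no solution $\zeta$ of finite order other than perhaps specific small-order roots of unity — a statement in the spirit of results of Conway–Jones on vanishing sums of roots of unity, but in a form that may require new input.
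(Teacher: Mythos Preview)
The statement you are addressing is presented in the paper as an open \emph{conjecture}, not a theorem: the authors explicitly write that they ``do not have at present any conceptual reason for it'' and offer only the computational evidence that $\phi_k$ and $\psi_k$ are irreducible for $k\le 250$. So there is no proof in the paper to compare your argument against.

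Your write-up is likewise not a proof but a research outline, and you acknowledge as much in your final paragraph. The preliminary observations are correct and potentially useful: the identity $\phi_k(x)=x^{2k-2}-(x+1)^2x^{k-2}+(x+1)$ is right, the substitution $y=x^{k-1}$ does yield the conic $xy^2-(x+1)^2y+x(x+1)=0$ with discriminant $(x+1)(x^3-x^2+3x+1)$, and the evaluations $\phi_k(1)=-1$, $\phi_k(-1)=1$ are correct and rule out the factors $x\mp 1$. However, none of the three proposed steps is carried through. The Galois-theoretic step (``controlling how $G_k$ permutes these pairs should \ldots force transitivity'') is a hope, not an argument: you give no mechanism for producing the needed cycle types, and primes dividing $k-1$ do not automatically give usable factorization patterns. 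The appeal to Schinzel--Zannier results on lacunary polynomials is misplaced in its stated form: those theorems bound the complexity of factors but do not by themselves yield irreducibility of a given sparse family, and you have not shown how to ``enumerate the finitely many shapes'' or derive a contradiction. The cyclotomic-factor analysis for $\psi_k$ is left entirely open, and you correctly identify it as requiring input of Conway--Jones type that you do not supply.

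In short: your outline contains some sound initial algebra, but every substantive step is either a heuristic or an explicit admission of an obstacle. This matches the status of the problem in the paper --- it is genuinely open.
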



\end{document}